\definecolor{backcolor}{rgb}{.7,.7,1}
\definecolor{backcolor2}{rgb}{1,.7,0.7}
\tikzset{circle split part fill/.style  args={#1,#2}{%
		alias=tmp@name, 
		postaction={%
			insert path={
				\pgfextra{%
					\pgfpointdiff{\pgfpointanchor{\pgf@node@name}{center}}%
					{\pgfpointanchor{\pgf@node@name}{east}}%
					\pgfmathsetmacro\insiderad{\pgf@x}
					\fill[#1] (\pgf@node@name.base) ([xshift=-\pgflinewidth]\pgf@node@name.east) arc
					(0:180:\insiderad-\pgflinewidth)--cycle;
					\fill[#2] (\pgf@node@name.base) ([xshift=\pgflinewidth]\pgf@node@name.west)  arc
					(180:360:\insiderad-\pgflinewidth)--cycle;            
}}}}}  
\tikzset{
	position/.style args={#1:#2 from #3}{
		at=(#3), anchor=#1+180, shift=(#1:#2)
	}
}
\DeclareMathOperator{\Div}{div}
\DeclareMathOperator*{\argmin}{arg\,min}
\DeclareMathOperator*{\argmax}{arg\,max}
\begin{document}
\title[Variationally Correct Neural Residual Regression]{Variationally Correct Neural Residual Regression for Parametric PDEs: On the Viability of Controlled Accuracy }
\author{Markus Bachmayr$^1$}
\email{bachmayr@igpm.rwth-aachen.de}
\author{Wolfgang Dahmen$^{1,2}$}
\email{dahmen@igpm.rwth-aachen.de}
\author{Mathias Oster$^{1}$} 
\email{oster@igpm.rwth-aachen.de}

\address{$^1$ Institut f\"ur Geometrie und Praktische Mathematik, RWTH Aachen University, Templergraben 55, 52062 Aachen, Germany}
\address{$^2$ University of South Carolina, Mathematics Department, 1523 Greene Street, Columbia, SC 29208, USA}

\date{\today}

\thanks{The authors acknowledge funding by the Deutsche Forschungsgemeinschaft (DFG, German Research Foundation) -- project number 442047500 -- through the Collaborative Research Center ``Sparsity and Singular Structures'' (SFB 1481) and by the NSF Grants DMS 2038080, DMS-2012469, DMS-2245097.}

\newtheorem{prop}{Proposition}[section]
\newtheorem{lemma}[prop]{Lemma}
\newtheorem{cor}[prop]{Corollary}
\newtheorem{theorem}[prop]{Theorem}
\newtheorem{assumption}[prop]{Assumption}
\newtheorem{definition}[prop]{Definition}
\newtheorem{property}[prop]{Property}
\theoremstyle{remark}
\newtheorem{rem}[prop]{Remark}
\newtheorem{example}[prop]{Example}

\def\N{\mathbb{N}}
\def\R{\mathbb{R}}

\def\cF{\mathcal{F}}
\def\bbf{\mathbf{f}}
\def\cD{\mathcal{D}}
\def\cI{\mathcal{I}}
\def\cM{\mathcal{M}}
\def\cT{\mathcal{T}}
\def\cV{\mathcal{V}}
\def\cL{\mathcal{L}}
\def\cB{\mathcal{B}}
\def\cS{\mathcal{S}}
\def\cP{\mathcal{P}}
\def\cQ{\mathcal{Q}}
\def\cR{\mathcal{R}}
\def\cU{\mathcal{U}}
\def\bL{\mathbf{L}}
\def\bK{\mathbf{K}}
\def\bC{\mathbf{C}}
\def\X{X\in\{L,R\}}
\def\D{{\Delta}}
\def\H{\mathcal{H}}
\def\bM{\mathbf{M}}
\def\bQ{\mathbf{Q}}
\def\bG{\mathbf{G}}
\def\bP{\mathbf{P}}
\def\bW{\mathbf{W}}
\def\bT{\mathbf{T}}
\def\bV{\mathbf{V}}
\def\bv{\mathbf{v}}
\def\bz{\mathbf{z}}
\def\bw{\mathbf{w}}
\def\bQ{\mathbf{Q}}

\def\t{\tilde}
\def\lll{\langle}
\def\rr{\rangle}

\newcommand{\epsref}{\epsilon_{\mathrm{ref}}}
\newcommand{\epspred}{\epsilon_{\mathrm{pred}}}
\newcommand{\ba}{\mathbf{a}}
\newcommand{\bb}{\mathbf{b}}
\newcommand{\bc}{\mathbf{c}}
\newcommand{\bd}{\mathbf{d}}
\newcommand{\bs}{\mathbf{s}}
\newcommand{\bff}{\mathbf{f}}
\newcommand{\bp}{\mathbf{p}}
\newcommand{\bg}{\mathbf{g}}
\newcommand{\bq}{\mathbf{q}}
\newcommand{\br}{\mathbf{r}}
\newcommand{\PP}{\mathbb{P}}
\newcommand{\G}{\mathbb{G}}
\newcommand{\e}{\varepsilon}

\newcommand{\be}{\begin{equation}}
\newcommand{\ee}{\end{equation}}

\newcommand\eref[1]{(\ref{#1})}

\newcommand\bA{\mathbf{A}}
\newcommand\bB{\mathbf{B}}
\newcommand\bR{\mathbf{R}}
\newcommand\bD{\mathbf{D}}
\newcommand\bE{\mathbf{E}}
\newcommand\bF{\mathbf{F}}
\newcommand\bH{\mathbf{H}}
\newcommand{\cN}{\mathcal N}
\newcommand\bU{\mathbf{U}}
\newcommand\cH{\mathcal{ H}}
\newcommand\sk{{sk}}
\newcommand{\XX}{\mathbb{X}}
\newcommand{\id}{{\rm id}}

\newcommand{\dI}{\Delta}
\newcommand{\cY}{\mathcal{Y}}
\newcommand{\U}{\mathbb{U}}
\newcommand{\V}{\mathbb{V}}
\newcommand{\W}{\mathbb{W}}
\newcommand{\pP}{\mathbb{P}}
\newcommand{\Y}{\mathbb{Y}}

\newcommand{\E}{\mathbb{E}}
\newcommand{\cX}{\mathcal{X}}
 
\newcommand{\dN}{\mathcal{N}\!\mathcal{N}}
\newcommand{\cZ}{\mathcal{Z}}
\newcommand{\tr}{{\rm train}}
\newcommand{\hx}{{\hat{x}}}
\renewcommand{\H}{\mathbb{H}}
\newcommand{\s}{\mathsf{s}}

\newcommand{\wt}{\widetilde}
\renewcommand{\X}{\XX}

\newcommand{\mA}{\mathfrak{A}}
\newcommand{\mI}{\mathfrak{I}}
\newcommand{\mS}{\mathfrak{S}}
\newcommand{\mR}{\mathfrak{R}}
\newcommand{\mP}{\mathfrak{P}}
\newcommand{\mT}{\mathfrak{T}}

\newcommand{\wtF}{{\cF}}
\newcommand{\wtX}{{\X}}
\newcommand{\wtY}{{\Y}}
\newcommand{\wtB}{{\cB}}
\newcommand{\wh}{\widehat}

\newcommand{\pp}{p} %
\newcommand{\pdom}{\cY}%

\newcommand{\uN}{{u^{\mbox{\tiny{$\mathcal{N}\!$}\! }}}}
\newcommand{\uNe}{{u_\e^{\mbox{\tiny{$\mathcal{N}\!$}\! }}}}
\newcommand{\uNi}{{u_i^{\mbox{\tiny{$\mathcal{N}\!$}\! }}}}
\newcommand{\bn}{\mathbf{n}}
\newcommand{\aN}{{a^{\mbox{\tiny{$\mathcal{N}\!$}\! }}}}

\renewcommand{\mP}{\Theta}

\providecommand{\abs}[1]{\lvert#1\rvert}
\providecommand{\bigabs}[1]{\bigl\lvert#1\bigr\rvert}
\providecommand{\Bigabs}[1]{\Bigl\lvert#1\Bigr\rvert}
\providecommand{\biggabs}[1]{\biggl\lvert#1\biggr\rvert}
\providecommand{\lrabs}[1]{\left\lvert#1\right\rvert}

\providecommand{\norm}[1]{\lVert#1\rVert}
\providecommand{\bignorm}[1]{\bigl\lVert#1\bigr\rVert}
\providecommand{\Bignorm}[1]{\Bigl\lVert#1\Bigr\rVert}
\providecommand{\biggnorm}[1]{\biggl\lVert#1\biggr\rVert}
\providecommand{\lrnorm}[1]{\left\lVert#1\right\rVert}

\providecommand{\floor}[1]{\lfloor#1\rfloor}
\providecommand{\ceil}[1]{\lceil#1\rceil}

\newcommand{\md}{p}
\newcommand{\mD}{P}

\newcommand{\loss}{E}
\newcommand{\lossemp}{\loss_{\rm emp}}

\newcommand{\ug}{\ensuremath{\mathfrak{u}}}
\newcommand{\wg}{\ensuremath{\mathfrak{w}}}

\newcommand{\TODO}[1]{\textcolor{blue}{[#1]}}
\newcommand{\dw}[1]{\textcolor{blue}{#1}}
\maketitle

\begin{abstract}
This paper is about learning the parameter-to-solution map for systems of partial differential equations (PDEs) that depend on a potentially large number of parameters 
  covering all PDE types for which a {\em stable variational formulation} (SVF) can be found. 
A central constituent is the notion of {\em variationally correct residual} loss function meaning that its value is always uniformly proportional to the squared solution error in the norm determined by the SVF, hence facilitating  rigorous {\em a posteriori} accuracy control. 
It is based on a single variational problem, associated with the family of parameter dependent fiber problems, employing the notion of  {\em direct integrals of Hilbert spaces}. Since in its original form the 
loss function is given as a dual test norm of the residual a central objective   
is to develop equivalent computable expressions. A first critical role is  played by 
{\em hybrid hypothesis classes}, whose elements are piecewise polynomial in (low-dimensional) spatio-temporal variables with parameter-dependent coefficients that can be represented, e.g. by neural networks. 
Second, working with  first order SVFs, we distinguish  two scenarios: (i)  the test space can be chosen as an $L_2$-space (e.g. for elliptic or parabolic   problems) so that  residuals live in $L_2$ and can be evaluated directly; (ii)
when trial and test spaces for the fiber problems (e.g.   for transport equations) depend
on the parameters, we use {\em ultraweak} formulations. 
In combination with {\em Discontinuous Petrov Galerkin} concepts the hybrid format is 
then instrumental to arrive at  variationally correct   computable residual loss functions. Our findings are illustrated by numerical experiments representing (i) and (ii), namely
elliptic boundary value problems with piecewise constant diffusion coefficients and 
pure transport equations with parameter dependent convection field.

\smallskip
\noindent
\emph{Keywords.} Parametric partial differential equations, 
solution manifolds, physics-informed learning, deep neural networks, stable
variational formulations, least-squares variational formulations, Discontinuous Petrov-Galerkin methods.
\end{abstract}

\section{Introduction}\label{sec:intro}
The problem of efficient computational exploration of  families of solutions of parameter-dependent partial differential equations, also called \emph{solution manifolds}, arises in many different applications.
Elements of solution manifolds usually describe the viable states of a physical system,
and one is often interested in the selection of parameters
that best explains given measurements or observational data. 
Constructing an efficient approximation of the map taking observational data to
the states in the manifold is therefore at the heart of many forward and inverse 
simulation tasks. 
Approximations of this type can be obtained, for example, by means of projection-based
model order reduction methods or by operator learning methodologies based
 on deep neural networks.

To explain this in more detail, 
we now assume as our starting point a  family of partial differential equations (PDEs), given in general form by some operator $\cF$ depending on parameters 
$\md$ from a domain $\mD$. Given $\md\in\mD$ we then seek a solution $u$
in some Banach space $\U$, called the \emph{trial space}, such that
\be 
\label{par}
\cF(u;\md)=0. 
\ee
Thus, $u$ is not only a function of {\em spatio-temporal variables} in the domain on which the PDE is posed, but also of the {\em parametric variables} $\md\in\mD$.
The choice of function space $\U$ that accommodates   
solutions $u$ depends on $\cF$ and determines the metric in which we measure accuracy of approximations to $u$. For instance, for stationary PDE problems, $\U$ is typically a function space (such as a Sobolev space) on a spatial domain $\Omega$. For time-dependent problems $\Omega$ would typically be a space-time cylinder.

In general, the parameter dependence may reflect incompleteness of the model that underlies \eref{par}. 
For instance, such models may be based on constitutive laws that are not precisely known or depend on missing data.
Assuming that all possibilities of interest are covered by parameters in $\mD$, we can assume that
\eref{par} determines a {\em unique} solution $u=u(\md)\in \U$ for each parameter $\md$ (for an alternative regularization concept for incomplete models, see \cite{DSW,DMS}).
The \emph{solution operator} $\cS: \md \mapsto u(\md)$ is then well-defined, and its range 
\[ 
\cM=\cM(\mD):= \{ u(\md):\md\in\mD\}
\] 
is commonly referred to as the solution manifold associated to the parametric problem.

We postpone a brief discussion of established methods for approximating $\cS$ (and hence $\cM$) to Section \ref{ssec:proj}, but we remark here that 
the universality of deep neural networks (DNNs) and their prominent role in modern machine learning offer promising perspectives for learning-based approximations of solution manifolds. An efficient exploration of $\cM$ then requires the ability to efficiently evaluate $\cS$, which naturally favors approximate representations of $\cS$
that depend on as few degrees of freedom as possible. In this sense, a computationally handy surrogate 
for $\cS$ can be viewed as a {\em reduced model}.

A guiding theme in this work is that
 {\em accuracy quantification} is an indispensable prerequisite 
for prediction capability of a simulation task. Moreover, doing so with respect to metrics that are intrinsic to the model is particularly important when dealing with inverse problems. However, quantifying errors for the high-dimensional
approximation problems with DNNs in operator learning is 
a widely open problem. One major obstacle is that the only practically relevant way
of constructing approximations by DNNs is via non-convex optimization, which
entails a significant uncertainty in optimization success. As a consequence, the theoretically valid expressive power of a given DNN model may not be practically realizable. From the perspective of model reduction, it is  not an option to resort to generous {\em over-parameterizations} of
DNNs that is common in machine learning, so as to drive the underlying loss function to zero and thus ensuring that local minima are actually global ones. 

To cope with these issues from the viewpoint of accuracy quantification, the central
goal   
in this article is to develop loss functions
for the underlying learning problem that we call {\em variationally correct}. This means:
\begin{equation}\tag{VC}\label{VC}
\hspace{-1.2cm}
\begin{minipage}{0.9\textwidth}
\begin{quote}
At any stage of an associated optimization process, the current loss is uniformly
proportional to the squared error of the corresponding estimator with respect
to a metric that is adapted to the problem.
\end{quote}
\end{minipage} 
\end{equation}
 Thus, although one may not guarantee in practice that the optimization
gets close to a minimizer, the terminal loss becomes a rigorous 
a posteriori error bound certifying 
the result. 
The concepts proposed in this paper do not target a specific PDE model but are designed 
to cover a possibly wide scope of PDE types covering, for example,  dissipative elliptic or parabolic problems, transport problems, but also dispersive models.

While so-called purely data-driven regression in the trial space $\U$ is by definition
variationally correct, its significant computational cost motivates looking for residual-type loss functions that avoid the computation of many high-fidelity training samples. A key ingredient in rendering such loss functions variationally correct are {\em stable variational formulations} for \eref{par}. 
This readily leads to idealized formulations in terms
 of residuals that are, however, to be  measured generally in dual norms whose   practical evaluation   is generally problematic. 
We highlight different basic strategies towards computable variationally correct residual loss functions, focusing mainly
on the combination of deep networks with {\em first order system least-squares} 
and with \emph{discontinuous Petrov-Galerkin} (DPG) formulations for different types of parameter-dependent PDEs. 

Concerning the connection to operator learning,
our focus here is on the {\em parameter-to-solution operator} where the parameter dimension $d_\pp$ can be large but remains finite. In general, operator learning  addresses instead situations where $\cS$ acts
on (compact) sets of functions of input data. Corresponding meta-architectures for representing $\cS$ include therefore a first ``discretization layer'' that generates finite-dimensional
input parameters for a subsequent layer that approximates $\cS$. Of course,
when increasing  accuaracy demands the dimension of input parameters generally has to grow.  Mainly in order to avoid an additional level of technicalities, we have skipped here such a discretization step and view $\cS$ as a mapping on a parameter domain of potentially high but finite dimension.
Our priority here is to learn $\cS$ with respect to the correct model-compliant metrics, as explained later in technical terms, which is of particular importance for stability estimates in inverse problems.

\subsection{Beyond projection-based model reduction}\label{ssec:proj}

Finding computationally efficient surrogates for $\cM$ or $\cS$ can be viewed as a task of \emph{model order reduction}. Well-established strategies for achieving this include the {\em reduced basis method} (RBM; see, for example, \cite{RHP,Ro,DPW,BCDDPW}) and {\em proper orthogonal decomposition} (POD; see, 
 for example, \cite{KV}).  
These methods are based on generating a {\em linear} (or affine) subspace of $\U$ such that $\cS$ can be replaced for each parameter $\pp$ by a suitable projection (in the case of the RBM, a Galerkin projection) to this subspace.
Such methods can in principle offer some degree of error control; while in the RBM, one usually aims at approximation in $L_\infty(\mD;\U)$, which can be difficult to achieve in practice for high-dimensional parameter domains (see \cite{CDDN}), POD is particularly suitable for controlling errors in $L_2(\mD;\U)$.

For constructing the corresponding subspace, in both cases one accepts a major computational {\em offline cost} that has to be spent only once, while subsequent parameter queries at an \emph{online} stage can be carried out very efficiently -- in particular, much faster than a single high-fidelty solve of the PDE. To achieve this efficiency, the dimension of the reduced space required for a certain error needs to be small. 
In the case of RBM, a lower bound on the error achievable by an $n$-dimensional subspace is given by the 
{\em Kolmogorov $n$-widths}  
\be
 \label{Kolm}
d_n(\cM)_\U := \inf_{{\rm dim}\,\W=n}\sup_{u\in\cM}\inf_{w\in \W}\|u-w\|_\U.
\ee
For many elliptic problems where $\cS$ depends holomorphically on the parameters, these $n$-widths decay rapidly even for large parametric dimensions due to anisotropies in the parametric dependence (see, e.g., \cite{CDacta} and the literature cited there) or due to additional structural features \cite{BC:17}.

Unfortunately, for PDEs with little or no dissipative effects, such as hyperbolic problems, this is no longer the case, and methods based on a selection of linear subspaces become very inefficient. In particular, for solutions that exhibit jump discontinuities (such as shocks) with parameter-dependent locations, the $n$-widths generally decay only algebraically, with a rate that deteriorates exponentially with respect to the dimension of $\mD$.
Many other methods that build explicit representations of parameter-dependent solutions, such as sparse polynomial expansions in $p$ or low-rank tensor representations (see, e.g., \cite{BCD:18} and the references given there), are subject to the same restrictions in terms of the decay of $n$-widths or related quantities, since they eventually also rely on choices of linear subspaces.

A natural question is thus whether in such cases, $\cM$ still exhibits some sort of structural sparsity that can be exploited by other types of {\em nonlinear reduced models}.
DNNs can provide the required flexibility.
In particular, the solution manifold of pure {\em linear transport equations} with parameter-dependent convection field has been shown to be approximable (under certain structural assumptions on the convection field, such as affine
parameter dependence) by certain DNNs without a curse of dimensionality \cite{Dtransport}. Hence, although the use of DNNs by itself does not guarantee good performance in practice, such nonlinear approximations by compositions have the potential to avoid the basic restrictions of the $n$-widths \eqref{Kolm}.

\subsection{Model reduction by empirical risk minimization}\label{ssec:empiricalrisk}

A common, purely data-driven approach to approximating  $\cS$ (pursued, for example, in \cite{Stu1,DDP,DWW,Stu2}) is based on generating
a training set of synthetic data $\wt u(\md)\approx u(\md)$ for
parameter samples $\md\in \wh\mD$, where $\wh\mD\subset\mD$ is a sufficiently large yet finite subset. Thus, acquiring synthetic data means   approximately solving \eref{par} for each $\md\in \wh\mD$. 
These synthetic data are then used to construct a surrogate for $\cS$ by regression, as an instance of what has become known as \emph{operator learning}. 
A typical way of training
a surrogate $\uN(\cdot,\md;\theta^*)\approx u(\cdot;\md)$, as a function of spatio-temporal variables as well as parametric variables representing $\md$, is to minimize a mean-square {\em empirical risk} functional
\be
\label{datadriv}
\|\wt u- \uN(\theta)\|^2_{\ell_2(\wh\mD;\U)} := \frac{1}{\#\wh\mD}\sum_{\md\in\wh\mD}
\|\wt u(\md)- \uN(\md;\theta)\|^2_\U 
\ee
over $\theta \in \Theta\subset \R^D$, where $\Theta$ is a set of trainable weights.
This terminology reflects the fact that $\|\wt u- \uN(\theta)\|^2_{\ell_2(\wh\mD;\U)}$ may be regarded as an approximation of the ``ideal risk''
\be
\label{idealoss}
\|u - \uN(\theta)\|^2_{L_2(\mD;\U)} = 
\int_\mD \|u - \uN(\theta)\|^2_\U \,d\mu(\pp) = \E_\mu\big[ \|u- \uN(\theta)\|_\U^2\big], 
\ee
which is indeed the expected error incurred by $\uN(\theta)$ with respect to the
the probability measure $\mu$ on $\mD$. If $\wh\mD$ is a set of samples from
this measure, \eref{datadriv} is a standard Monte Carlo approximation to the expectation.

A rationale behind this approach is therefore to treat the dependence on the spatio-temporal variables deterministically, whereas $\pp$ is viewed as a random variable (where $\mD$ is endowed with a probability measure). Moreover, a mean-square error metric is amenable to standard gradient descent optimization
techniques and leads to more tractable problems in high parametric dimensions (see, for example, \cite{GrVo}) than error measures in terms of $L_\infty$-norms on $\mD$. The norm $\|\cdot\|_\U$ appearing in the loss function typically
involves integration and derivatives. Hence $\|\uN(\theta)\|_\U$ can usually not be evaluated exactly. Here (and in what follows) we ignore the additional perturbations incurred by quadrature which we assume to cause negligible consistency errors.

A resulting approximation $\wt\cS$ to $\cS$, based on a surrogate 
$\uN(\cdot,\md;\theta^*)$, where 
\begin{equation}\label{eq:gitta}
\theta^*\in \argmin_{\theta\in \Theta}\|\wt u- \uN(\theta)\|^2_{\ell_2(\wh\mD;\U)},
\end{equation}
may be viewed as providing a reduced model for $\cM$ provided that the allocated
budget $\Theta$ of degrees of freedom has a feasible size so as to render the
evaluation of $\uN(\cdot,\md;\theta^*)$ significantly more efficient than resorting to 
a highly accurate solver for each instance of $\md$.
 
 Certifying such a reduced model requires quantifying the accuracy of the approximation
 $\wt\cS$. In the light of the preceding discussion, a suitable metric for measuring errors is given by \eref{idealoss}  
with a choice of the probability measure $\mu$ on $\mD$ that may depend on additional 
background information.

In summary, \eref{datadriv} can be viewed as a classial regression problem and one can
tap into results of machine learning to bound the deviation of the empirical loss in 
\eref{datadriv} and the ideal loss in \eref{idealoss},
\be
\label{deviation}
\Bigl|  \|u - \uN(\theta)\|^2_{L_2(\mD;\U)}  - \|\wt u- \uN(\theta)\|^2_{\ell_2(\wh\mD;\U)} \Bigr|
\ee
in expectation or probability, see e.g. \cite{BCDD}.
Corresponding bounds can be based on concentration inequalities (depending on the measure $\mu$)
as well as complexity bounds for the hypothesis class $\cH(\Theta)= \{\uN(\cdot;\theta):
\theta\in \Theta\}$ and the solution manifold $\cM$. We defer more detailed discussions
of this issue to forthcoming work.

Obvious   advantages of this data-driven approach is its conceptual simplicity and that approximations 
naturally respect the  model-compliant metric $\|\cdot\|_\U$. In addition, once one has an accurate
solver at hand, the approach works equally well for linear as wells as nonlinear
models \eref{par}.

On the other hand,
the achievable accuracy depends on $\#\wh\mD$ as well as on the accuracy of the synthetic data $\wt u(\md)$. 
Hence, the fact that this requires computing a potentially large number $\#\wh\mD$ of
training snapshots approximating $u(\md)$,  each one requiring a sufficiently accurate discrete approximate 
solution of \eref{par}, may be seen as a serious disadvantage. A remedy resulting
in a significant complexity reduction is -- very much resembling the construction of reduced bases -- to train the reduced model on residual-type loss functions, an approach that has been termed \emph{physics-informed neural networks} (PINNs) \cite{Kar1,Kar2,Kar3,Mishra1,Mishra2}.

\subsection{Limitations of basic PINN formulations}\label{ssec:PINN}

In its basic version, PINN is based on minimizing an empirical risk involving {\em pointwise}
samples of a residual in space and parameter domain. 
The striking and indeed very tempting
point is that this avoids the need for computing high-fidelity solution snapshots
and  has almost black-box character, enabling the use of generic public domain
software packages for linear as well as nonlinear PDE models. 
In the above terms a typical residual loss function, for $\cF$ as in the stationary case of the general parametric problem \eqref{par}, reads
\be
\label{PINN} 
 \frac{1}{ M N}\sum_{i,j=1}^{M, N} \big|\cF(\uN(x^i,\pp^j;\theta); \pp^j)|^2.
\ee
Such a loss is amenable to standard machine learning strategies based, e.g., on stochastic gradient descent with gradient evaluation by backpropagation.

However, \eqref{PINN} amounts to treating residuals as elements of an $L_2$-space, which is appropriate only under quite specific assumptions on $\cF$. The fundamental issues with such a formulation can be seen in the basic example
\[
\cF(u; \pp)= f + \Div(a(\pp)\nabla u)=0 \quad\mbox{in}\,\,\Omega,\quad u|_{\partial\Omega}=0.
\]
The standard loss function in \eqref{PINN} then reads
\begin{equation}\label{PINNellipt}
    \frac{1}{N}\sum_{j=1}^{N} \biggl\{ \frac{\abs{\Omega}}{M_{\Omega}}  \sum_{i = 1}^{M_{\Omega}} \bigabs{f + \Div\bigl(a(\pp^j) \nabla \uN(x^i, \pp^j;\theta)\bigr)}^2  
    + \frac{\abs{\partial\Omega}}{M_{\partial\Omega}}  \sum_{i = 1}^{M_{\partial\Omega}} \bigabs{\uN(x^i, \pp^j;\theta)}^2 \biggr\},
\end{equation}
where $\pp^j$ are samples in $Y$ and in the two inner summations, $x^i$ are samples in $\Omega$ and $\partial\Omega$, respectively.
As $N , M_{\Omega}, M_{\partial\Omega} \to \infty$, the left part of \eqref{PINNellipt} will tend to 
$\| f + \Div(a\nabla \uN(\theta))\|^2_{L_2(\Omega\times Y)}$, the right part to $\|\uN(\theta)\|_{L_2(\partial\Omega)}^2$. On the one hand, it is well-known that the domain term is too strong and may not even define a bounded expression for non-smooth domains or diffusion coefficients or when $f$ is a proper functional in $H^{-1}(\Omega)$ (such as a trace integral). On the other hand, the boundary term is too weak: a well-posed variational formulation would require an $H^{1/2}(\partial\Omega)$-norm. This mismatch implies that the size of the loss \eqref{PINNellipt} does not provide any certifiable bound for the error $u - \uN(\theta)$ (which here, one would typically aim to measure in $\U= H^1_0(\Omega)$).

\subsection{Proportionality of error and residual}\label{ssec:err-res}

As already emphasized earlier, in view of the uncertainty of optimization success and the lack of any realistic {\em a priori} error bounds for 
DNN approximations of PDEs, a minimal goal regarding accuracy control is to infer
from the current loss, at any stage of the optimization, the size of the
error in the model-compliant metric $\|\cdot\|_\U$. 
That is, any residual-based empirical loss along the lines of \eqref{PINN} or \eqref{PINNellipt} using parametric samples from $\wh\mD$ should remain proportional to $\|u- \uN(\theta)\|^2_{\ell_2(\wh\mD;\U)}$
uniformly in the number of samples $\#\wh\mD$. 
Hence, the remarks in Section \ref{ssec:empiricalrisk}
still apply to relate a residual loss, via $\|u- \uN(\theta)\|^2_{\ell_2(\wh\mD;\U)}$, to $\|u-\uN(\theta)\|^2_{L_2(\mD;\U)}$
in expectation or with high probability.

Our strategy for devising residual loss functionals with this property is based on a weak form of \eqref{par} for each $\pp \in \mD$: viewing the residual as a functional, find $u(\pp) \in \U$ such that
\begin{equation}\label{generalweak}
    \cF(u(\pp); \pp)(v)=0 \quad\text{for all $v \in \V$}    
\end{equation}
with a suitable test space $\V$.
This test space should be chosen (depending on $\U$) such that $\cF(\cdot ; \pp) \colon \U \to \V'$ satisfies
\begin{equation}
\label{res-err}
      \norm{ \cF( w; \pp)}_{\V'} \eqsim \norm{ w - u}_{\U}
\end{equation}
with constants that are uniform in $\pp$. Let us assume for the moment that
the spaces $\U, \V$ can be chosen  {\em independent} of $\pp\in \mD$. 
Then \eref{res-err} implies
\begin{equation}
\label{integrate}
      \norm{ \cF( w ) }_{L_2(\mD;\V')}^2 =  \int_\mD \| \cF(w; \pp) \|^2_{\V'} \,d\mu(\pp) \eqsim  \int_\mD \|u - w\|^2_\U \,d\mu(\pp) = \norm{w - u}_{L_2(\mD; \U)}^2 \,,
\end{equation}
which suggests interpreting $\cF$ as a mapping from $\X = L_2(\mD;\U)$ to $\Y' = (L_2(\mD;\V))' = L_2(\mD;\V')$.

As will be seen later, to guaranty the validity of \eref{generalweak}, it is not always
possible to choose the underlying pair of trial and test space $\U, \V$ independent of
$\pp$. In fact, they may even have to differ as {\em sets} for different $\pp$ which we
refer to as {\em ``essentially different''}. We will show in Section \ref{sssec:lifted}
that under mild assumptions on   $\pp$-dependence integration in \eref{integrate}
is well-defined and one still  arrives at an error-residual relation of a 
{\em single variational formulation} over a pair of suitably extended 
versions of the spaces $\X, \Y$, defined above.

At any rate, the corresponding loss functional in the spirit of \eqref{PINN} thus takes the form
\[ 
   \frac{1}{N} \sum_{i = 1}^{N}  \| \cF(\uN(x^i,\pp^j;\theta); \pp^j) \|^2_{\V'} .
\]
Unless \eref{res-err} holds when $\V$ itself is also an $L_2$-space, the obvious difficulty in evaluating  the dual norms 
\[  
	 \|\cF(w)\|_{\V'}=\displaystyle\max_{v\in\V}\frac{\cF(w)(v)}{\|v\|_\V}
\] 
is the required maximization over the entire Hilbert space $\V$.

\subsection{Relation to existing results}

Whenever the solution of a PDE is characterized by an energy minimization principle
(the elliptic case) a natural way of producing approximate solutions in the right metric
is to minimize the corresponding energy functional over the chosen hypothesis class.
In a finite element context this can actually be used to produce also a reliable and efficient posteriori
error bounds. These concepts do unfortunately not carry over to more general hypothesis classes like DNNs so that this approach does in general not come with accuracy quantification. This remains crucial because the minimization of such quadratic functionals over highly nonlinear hypothesis classes remains problematic.
Besides, this approach is limited to elliptic problems.   

The idea of minimizing instead a variationally correct residual   applies to a wider scope and has been considered  in several previous works, sometimes referred to as WAN (weak adversarial networks). More specifically,
the authors in \cite{Friedrichs,BYZZ,ZBYZ} discuss specific PDE models namely so called
{Friedrichs systems} as well as second order elliptic problems, primarily not for
the purpose of model reduction or operator learning but to devise DNN-based PDE solvers.
This has been the primary focus (albeit not always the original motivation) also in several more recent works such as \cite{Canuto,Bertol,Pardo,Urban}, where the considered PDEs are posed on domains of small dimension. In all these works, the need to minimize a dual norm has been identified as a major theoretical as well as practical hurdle. For instance, the natural idea of solving $\min_{\theta\in\Theta}\|\cF(\uN(\theta))\|_{\Y'}$ by repeatedly performing 
some gradient ascent steps on 
\begin{equation}\label{Fminmax}  \frac{\cF(\uN(\theta))(\uN'(\psi))}{\|\uN'(\psi)\|_\Y} 
\end{equation}
over elements $\psi\in\Psi$ of a suitable, sufficiently rich test system $\Psi$, followed by some descent steps over the primal system $\Theta$, is problematic and  not very reliable. Again, ensuring convergence of the optimization over the test space is an issue. For low-dimensional problems, some of these works consider therefore more manageable test systems like wavelet systems \cite{Urban}, finite elements \cite{Canuto} or trigonometric systems \cite{Pardo}.
These remedies are unfortunately not feasible in the present high-dimensional parametric setting. Finally, enforcing essential boundary conditions in the correct norms for DNN approximations poses some challenges \cite{Canuto}.

\subsection{Hybrid approximation format}\label{ssec:hybrid}

Due to the issues mentioned above, rather than approximating parameter-dependent solutions directly by DNNs in all variables, we focus here on a hybrid approximation format for that uses piecewise polynomial approximations in the spatio-temporal variables combined with DNN approximations of the parameter-dependent coefficients. 
In doing so we exploit finite element methods in small spatial dimensions, where concepts for the reliable evaluation of dual norms are available. 

While the specific choice of finite element basis functions depends on the PDE model at hand, the general structure can be described as follows.
Given finite element basis functions $\{\phi_{i}\}_{i \in \mathcal{I}}$ on the spatio-temporal domain under consideration, we use approximations of the form
\be
\label{eq:hybridformat}
u(t,x, \pp) \approx \sum_{i \in \mathcal{I}} \mathbf{u}_{\mathrm{NN},i}(\pp; \theta)\phi_{i}(t,x),
\ee
where $ \pp \mapsto \mathbf{u}_\mathrm{NN}(\pp; \theta) = \bigl( \mathbf{u}_{\mathrm{NN},i}(\pp; \theta)\bigr)_{i \in\mathcal{I}}$ is realized by a neural networks with trainable parameters $\theta$. Representations of this form have been tested for parametric elliptic problems, for example, in \cite{GPRSK:21,CGE:23}; differently from our approach, these approximations were trained on samples of finite element solutions as in \eqref{eq:gitta}, where the loss function quantifies the relative error in energy norm to the parameter-dependent discrete solutions.

Let us note that instead in our approach, by the use of appropriate residual-based loss functions, we obtain estimates of the error (up to data oscillation) with respect to the \emph{exact solution} of the underlying continuous problem.

The format \eqref{eq:hybridformat} is instrumental for treating dual norms in variationally correct loss functions without the use of min-max optimization as in \eqref{Fminmax}.
It also offers a number of further significant advantages. First,
rigorously enforcing essential boundary conditions for DNNs is an issue, see \cite{Canuto} for various strategies. 
The hybrid format instead greatly facilitates incorporating essential (spatial) boundary conditions.    Second, since the input parameters of the coefficient
neural networks are just the model parameters, training requires only efficient backpropagation.
When using DNNs with both spatio-temporal and parametric variables 
as input parameters, one needs to compute higher-order derivatives with respect to all variables. This has to be
done with great care to avoid a significant loss of efficiency, see e.g. \cite{Pardo}.

\subsection{Novelty and organization of the material}

The central contribution in the present work is to propose and analyze concepts 
for operator learning for a wide and diverse scope of PDE models with rigorous accuracy control in model-compliant metrics that are feasible in high-dimensional parametric regimes. The main conceptual constituents
can be summarized as follows:
\begin{enumerate}
\item 
Reformulate the given family of parametric PDEs as a {\em single variational problem} posed
over Hilbert spaces of functions of spatio-temporal and parametric 
variables, see Section \ref{sec:residual}. Roughly, this entails treating spatio-temporal variables deterministically while a probabilistic interpretation of the parametric domain allows us to resort to Monte Carlo sampling as a basis for a learning approach.
\item 
Establish {\em well-posedness} of the single high dimensional problem (in the sense of the Babu\v{s}ka-Ne\v{c}as Theorem) from judiciously chosen {\em stable variational formulations} of the parametric (low-dimensional)
fiber problems, see Theorem \ref{thm:suff}. This gives rise to an {\em ideal residual
loss function} which is indeed variationally correct. The term ``ideal'' reflects that this a preliminary step because such residual loss functions typically involve 
a {\em dual norm} whose computational evaluation is problematic. Hence, this serves 
as the starting point for deriving from such ideal losses ones that are still variationally correct but are also practically feasible. We stress that the scope of PDE models to which our approach applies corresponds in essence to the ability of contriving stable variational formulations.
\item  Section \ref{sec:3} is then devoted to \emph{practical variationally correct residual loss functions}. Specifically, this is based on first reformulating, if necessary,
a given parametric PDE as {\em first-order systems}. We then distinguish in Section \ref{ssec:3.1}   two 
scenarios given in \eqref{eq:sf} and \eqref{eq:uwf}, respectively: In the first scenario (\ref{eq:sf}), we identify stable variational formulations inducing 
an operator that maps (an appropriate subspace of) its graph space as trial space bijectively onto an $L_2$-type space. In the second scenario (\ref{eq:uwf}), an $L_2$-type space is taken as trial space
and the concept of an \emph{optimal test norm} is used to define a test space leading to an induced operator with condition number equal to one. This is referred to as 
{\em ultra-weak formulation}.

The relevance of (\ref{eq:sf}) is clear. Since the residual is measured in $L_2$, the ideal residual loss is also practical. We demonstrate this variant in Sections \ref{ssec:3.2}, \ref{ssec:3.3} for a classical second order elliptic problem with
parameter-dependent diffusion coefficient and an analogous parabolic initial-boundary value problem with a space-time variational formulation. In Section \ref{ssec:ellnumer}
we present some numerical experiments for the elliptic example. 

In (\ref{eq:uwf}) we still have to deal with a non-trivial dual norm in the ideal loss function.
The point is that this ultra-weak formulation conveniently accommodates stable {\em discontinuous Petrov-Galerkin} (DPG) formulations, see Section \ref{ssec:dpg}. 
It is the combination of DPG concepts with the hybrid format of our hypothesis classes, as described in Section \ref{ssec:hybrid},
that allows us to derive practical variationally correct loss functions. This is exemplified for linear transport equations with parameter dependent convection field
in Section \ref{sssec:transport}. Corresponding numerical experiments are presented in
Section \ref{sssec:transportnum}.
\item  The hybrid format mentioned above is essential for scenario (\ref{eq:uwf}). Roughly,
the elements in corresponding hypothesis classes are piecewise polynomials as 
functions of spatio-temporal variables with parameter-dependent coefficients that may, for instance, be represented as DNNs. This is not strictly needed for scenario (\ref{eq:sf}). We have nevertheless tested it in comparison to a standard PINN approach for (\ref{eq:sf}) as well. 
Moreover, we discuss sparsification formats that do reduce representation complexity
significantly. Aside from this,
  it offers several major advantages: it facilitates naturally enforcing essential boundary conditions, and
it does not require expensive forward differentiation with respect to the spatio-temporal variables in the training process. In the absence of parameters, one falls back 
on well-established and accurate PDE solvers.
\end{enumerate}

A few words on the choice of test cases are in order. A major distinction between the elliptic case and transport equations lies in the fact in the former case trial and test spaces can be chosen as {\em independent} of the parameters. In this case graph spaces as trial spaces
are appropriate, which is scenario (\ref{eq:sf}).  By contrast, for transport equations either trial or test space
depends {\em essentially} on the parameters, that is, the spaces vary \emph{as sets} when the parameters vary. $L_2$ is then a space that robustly accommodates the solutions for all permissible parameters.
 For this reason, we resort to scenario (\ref{eq:uwf}) in this case. As different as (\ref{eq:sf}) and (\ref{eq:uwf}) appear to be, they are nevertheless closely related as argued in Section \ref{ssec:3.1}. Moreover, when finally resorting to DPG
concepts, one relies on elements of both scenarios, because the additional auxiliary 
skeleton variables can be interpreted as approximations in the graph space (see also
the proof of well-posedness in \cite{BDS}).

\section{Variational Formulations and Residual Loss Functions}\label{sec:residual}

\subsection{Parametric fiber problems}\label{sssec:fiber}
As indicated earlier, the validity of \eref{res-err}
hinges on a suitable variational formulation for each $\pp\in \mD$.
For linear problems, \eref{generalweak} takes for each $\pp\in\mD$ the form: find $u(\pp) \in \U_\pp$ such that 
\begin{equation}
\label{eq:paramdepform}
   b(u(\pp), v; \pp) - f(v) = 0\qquad \text{ for all $v\in \V_\pp$,}	
\end{equation}
viewed as a family of linear operator equations $\cB_\pp u(\pp)= f$ in $\U_\pp$, where $(\cB_p w)(v)
= b(w,v;\pp)$, $v\in \V_p$. As mentioned earlier, the  spaces $\U_\pp$ and $\V_\pp$
are to be chosen so as to ensure well-posedness of \eref{eq:paramdepform}. 
By the celebrated Babu\v{s}ka-Ne\v{c}as Theorem (see e.g.\cite{Braess}), this is equivalent to the validity of
\begin{equation}
\label{inf-sup-pp}
\begin{aligned}
\sup_{w\in\U_\pp}\sup_{v\in\V_\pp}\frac{b(w,v;\pp)}{\norm{w}_{\U_\pp} \norm{v}_{\V_\pp}}\le C_b,\quad
 \inf_{w\in\U_\pp}\sup_{v\in\V_\pp} \frac{b( w,v; \pp)}{\norm{w}_{\U_\pp} \norm{v}_{\V_\pp}}\ge c_b,\\
\text{for each $v\in \V_\pp\,\,\exists\, w_v\in \U_\pp$ such that $b( w_v,v ; \pp) \neq 0$,}
\end{aligned}
\end{equation}
for some constants $0<c_b=c_b(\pp), C_b=C_b(\pp)<\infty$, that generally may depend on $\pp$.  We  remark that respective stable variational formulations are known
for a wide scope of problems covering dissipative, indefinite, as well as dispersive models; see, for example, \cite{CDG}.

\begin{rem}
Given a (linear) PDE $\cB_\pp u=f$, underlying \eref{eq:paramdepform}, in strong form, it should be viewed as part of the problem
to identify a suitable pair of trial and test space $\U_\pp, \V_\pp$ (first on the infinite dimensional level) for which \eref{inf-sup-pp} holds. If the problem is {\em elliptic}
(or coercive) this choice is easy, namely $\U_\pp=\V_\pp$, where $\U_\pp$ is the energy space.
In general, when the problem is indefinite, non symmetric, or singularly perturbed,
one may have to accept $\U_\pp\neq \V_\pp$ in order to warrant stability,
see Proposition \ref{rem:uw} and Remark \ref{rem:roadmap} for a ``roadmap'' of how to proceed.
\end{rem}

As explained in Section \ref{ssec:err-res}, if all the spaces $\U_\pp$, $\V_\pp$ agree as sets $\U,\V$, with respective 
equivalent norms $\|\cdot\|_\U$, $\|\cdot\|_\V$, the Bochner spaces $\X=L_2(\mD;\U)$, $\Y = L_2(\mD;\V)$ provide a natural pair of trial and test spaces for a single variational 
problem determining the fiber solutions of \eref{eq:paramdepform} as functions of spatial and parametric variables. We refer to this as the \emph{lifted problem}.

Unfortunately, important examples that are discussed below reveal that to ensure
well-posedness in the sense of \eref{inf-sup-pp}, it may be necessary to accept
families of trial and test pairs $\U_\pp$, $\V_\pp$ that differ for different $\pp$ in an {\em essential way}, by which we mean that they differ as sets with non-equivalent norms.

To be still able to recast the family \eref{eq:paramdepform} of weak formulations as a {\em single
  space-parameter} weak formulation, we need to address two issues: first, what are 
 suitable replacements of the classical Bochner spaces as trial and test spaces and second, how does well-posedness of fiber problems \eref{eq:paramdepform} relate to
 the well-posedness of the resulting lifted problem?
 
  Regarding the first issue, the remarks in   Section \ref{ssec:empiricalrisk} suggest choosing $\X$ as a suitable space of functions $v \colon \pp\mapsto v(\pp) \in \U_\pp$ such that the quantities 
\[
	\int_\mD \norm{v(\pp)}_{\U_\pp}^2\,d\mu(\pp) < \infty ,
\]
and similarly for $\Y$ with $\U_\pp$ replaced by $\V_\pp$, remain well-defined.
To ensure that resulting notions of spaces $\X$ and $\Y$ are meaningful, we need some mild assumptions on the dependence of the fiber spaces $\U_\pp$ and $\V_\pp$ on the parameter $\pp$ that warrant measurability of elements in the Cartesian products
$\prod_{\pp\in\mD}\U_\pp$, $\prod_{\pp\in\mD}\V_\pp$. This leads us to the notion of
\emph{direct integrals} discussed next.

\subsection{Direct integrals}\label{sssec:Hilbert}

We now consider a general notion of measurability, going back to von Neumann \cite{JvN}, of mappings $\pp \mapsto v(\pp) \in \U_\pp$ as discussed above. The resulting Hilbert spaces can be regarded as generalizations of the direct sums of Hilbert spaces to the case of uncountable index sets. 

\begin{definition}\label{def:meassect}
	Let $(\W_\pp)_{\pp\in\mD}$   be a family of separable Hilbert spaces.
 We then call a sequence $(\xi_n)_{n\in\N}$ in the Cartesian product $\prod_{\pp \in \mD} \W_\pp$ a \emph{fundamental sequence of $\mu$-measurable sections} if for all $n$ and $m$, the function $\mD \ni \pp \mapsto \langle \xi_n(\pp), \xi_m(\pp)\rangle_{\W_\pp}$ is $\mu$-measurable, and for each $\pp \in \mD$, the finite linear combinations of $\xi_n(\pp)$, $n\in\N$, are dense in $\W_\pp$. Under these conditions, we say that $v \in \prod_{\pp \in \mD} \W_\pp$ is $\mu$-measurable if $\pp\mapsto \langle v(\pp), \xi_n(p)\rangle_{\W_\pp}$ is $\mu$-measurable for %
 $n\in\N$.
\end{definition}

The following result is shown in \cite[Ch.~IV.8]{Takesaki} (see also \cite[Ch.~VIII.3]{DautrayLions}).

\begin{theorem}
	\label{thm:mes}
		Let $(\W_\pp)_{\pp \in \mD}$ be a family of separable Hilbert spaces 
		such that $\prod_{\pp\in \mD}\W_\pp$ contains a
		fundamental sequence of $\mu$-measurable sections.
		Then the spaces $L_2\bigl(\mD, (\W_\pp)_{\pp \in \mD} \bigr)$ defined by
\[
L_2\bigl(\mD, (\W_\pp)_{\pp \in \mD} \bigr)  = \biggl\{ v \in \prod_{\pp\in\mD} \W_\pp \text{ $\mu$-measurable} \colon \int_{\pp \in \mD} \norm{v(\pp)}_{\W_\pp}^2\,d\mu(\pp)<\infty  \biggr\},  	
		\]
		where elements that agree $\mu$-almost everywhere are identified, endowed with the inner product
	\[
		\langle v, w\rangle_{L_2(\mD, (\W_\pp)_{\pp \in \mD})} = \int_\mD \langle v(\pp),w(\pp)\rangle_{\W_\pp} \,d\mu(\pp),
	\]
	 are Hilbert spaces.
	\end{theorem}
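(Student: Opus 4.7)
The plan is to verify the Hilbert space axioms for $L_2\bigl(\mD,(\W_\pp)_{\pp\in\mD}\bigr)$ in three stages: well-definedness of the integrals appearing in the definition and in the inner product, the vector space and pre-Hilbert structure (after quotienting by $\mu$-null sections), and completeness. The routine parts become standard once appropriate measurability is in place, so the bulk of the substantive work lies in measurability arguments that hinge on the presence of the fundamental sequence $(\xi_n)_{n\in\N}$.

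First, for any $\mu$-measurable section $v$ in the sense of Definition \ref{def:meassect}, I would show that $\pp\mapsto\norm{v(\pp)}_{\W_\pp}^2$ is $\mu$-measurable, so that membership in $L_2\bigl(\mD,(\W_\pp)_{\pp\in\mD}\bigr)$ is a meaningful condition. To this end, I would extract from $(\xi_n)$ a fiberwise orthonormal family of $\mu$-measurable sections $(\eta_n)$ whose linear hull stays dense in each $\W_\pp$, by applying Gram--Schmidt to the Gram matrices $G_N(\pp)=\bigl(\langle\xi_i(\pp),\xi_j(\pp)\rangle_{\W_\pp}\bigr)_{i,j\le N}$ whose entries are measurable by hypothesis. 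Parseval's identity then gives
\[
	\norm{v(\pp)}_{\W_\pp}^2 = \sum_{n\in\N}\bigabs{\langle v(\pp),\eta_n(\pp)\rangle_{\W_\pp}}^2,
\]
a countable sum of measurable functions, hence measurable. Measurability of $\pp\mapsto\langle v(\pp),w(\pp)\rangle_{\W_\pp}$ then follows by polarization, so the inner product integral is also well defined, and fiberwise Cauchy--Schwarz combined with Cauchy--Schwarz on $L_2(\mD)$ yields its finiteness on $L_2\bigl(\mD,(\W_\pp)_{\pp\in\mD}\bigr)$.

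Closure under pointwise linear combinations is immediate in each fiber, and measurability is preserved since $\langle \alpha v+\beta w,\xi_n\rangle = \alpha\langle v,\xi_n\rangle + \beta\langle w,\xi_n\rangle$; fiberwise Minkowski, integrated against $\mu$, yields the triangle inequality and closure under addition. Quotienting by the subspace of sections vanishing $\mu$-a.e.\ renders the inner product positive definite, while bilinearity and conjugate symmetry descend directly from each $\W_\pp$. For completeness, given a Cauchy sequence $(v_k)$, I would select a subsequence $(v_{k_\ell})$ with $\sum_\ell \norm{v_{k_{\ell+1}}-v_{k_\ell}}_{L_2(\mD,(\W_\pp))}<\infty$ and apply monotone convergence to the measurable partial sums of $g(\pp):=\sum_\ell\norm{v_{k_{\ell+1}}(\pp)-v_{k_\ell}(\pp)}_{\W_\pp}$ to obtain $g\in L_2(\mD)$, hence $g(\pp)<\infty$ for $\mu$-a.e.\ $\pp$. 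On that full-measure set the telescoping series converges absolutely in the Hilbert space $\W_\pp$, and I define $v(\pp)$ as its limit (setting $v(\pp)=0$ elsewhere). Fatou's lemma applied to $\norm{v-v_{k_\ell}}_{\W_\pp}^2$ then delivers $v\in L_2\bigl(\mD,(\W_\pp)_{\pp\in\mD}\bigr)$ together with the norm convergence $v_{k_\ell}\to v$, whence the full Cauchy sequence converges to $v$ as well.

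The principal obstacle I anticipate lies in measurability at two spots: first, arranging the fiberwise Gram--Schmidt so that the resulting orthonormal sections are genuinely $\mu$-measurable despite possible pointwise rank changes in $G_N(\pp)$, which requires a careful selection procedure tracking the rank; and second, certifying that the pointwise limit $v$ produced in the completeness step is $\mu$-measurable per Definition \ref{def:meassect}. The second reduces to noting that each $\pp\mapsto\langle v(\pp),\xi_n(\pp)\rangle_{\W_\pp}$ is the $\mu$-a.e.\ pointwise limit of the measurable functions $\pp\mapsto\langle v_{k_\ell}(\pp),\xi_n(\pp)\rangle_{\W_\pp}$, where the countability of the fundamental sequence is essential because measurability is tested against a countable family. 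Once these selection-type issues are secured, the remainder is a routine adaptation of the classical $L_2$ construction.
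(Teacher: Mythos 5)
The paper does not actually prove this theorem: it cites \cite[Ch.~IV.8]{Takesaki} and \cite[Ch.~VIII.3]{DautrayLions}, and your sketch reproduces the standard argument given there --- measurable fiberwise Gram--Schmidt orthonormalization of the fundamental sequence (essentially Takesaki's Lemma~8.12, which the paper itself invokes in Appendix~B) to obtain measurability of $\pp\mapsto\norm{v(\pp)}_{\W_\pp}^2$ via Parseval, then polarization for the inner product, and the Riesz--Fischer subsequence argument for completeness. The two obstacles you flag are exactly the points the cited references address, and both are handled the way you anticipate: rank degeneracies are dealt with by permitting the orthonormalized sections to vanish on measurable subsets of $\mD$ (zero terms do not affect the Parseval identity), and measurability of the a.e.\ limit follows because Definition~\ref{def:meassect} tests against only countably many sections; so your proposal is correct and follows essentially the same route as the proof the paper points to.
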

	The spaces provided by Theorem \ref{thm:mes} are called \emph{direct integrals} (or also \emph{Hilbert integrals}).
The simplest scenario where Theorem \ref{thm:mes} applies concerns the case where
$\W_\pp= \W$, with equivalent norms, uniformly in $\pp\in\mD$.
\begin{rem}
\label{rem:justified}
If $\W$ is a separable Hilbert space, the Bochner space $\X = L_2(\mD, \W)$ fulfills Assumption \ref{ass:HilbertIntegral}, see \cite[VIII.3, Rem.~2]{DautrayLions}. This justifies the previous discussion leading to \eref{integrate}.
\end{rem}

We will also need an analogous characterization of the dual   
of $L_2\bigl(\mD;,(\W_\pp)_{\pp \in \mD} \bigr)$, which will be applied, in particular, to the space  $\bigl(L_2(\mD,(\V_\pp)_{\pp\in\mD})\bigr)'$. This is instrumental for arriving at a well-defined ideal residual loss function also in scenarios where the fiber test spaces $\V_\pp$ differ essentially. 

In general, for a Hilbert space $\H$ with inner product $\lll\cdot,\cdot\rr_\H$, we denote by $\cR = \cR_\H \colon \H' \to \H$ the inverse of the Riesz isometry identifying $\H$ with its dual $\H'$, which for $\ell \in \H'$ is given by
\be
\label{Riesz}
\lll \cR \ell,v\rr_{\H} = \ell(v),\quad v\in \H.
\ee
For direct integrals $\H=\big(L_2(\mD,(\W_\pp)_{\pp\in\mD})\big)$ this mapping can be characterized in terms of the fiber Riesz lifts  $\mathcal{R}_\pp \colon \W_\pp' \to \W_\pp$ defined for each $\pp \in \mD$ and $\ell_\pp \in \W'_\pp$ by  $\langle \mathcal{R}_\pp \ell_\pp, v \rangle_{\W_\pp} = \ell_\pp(v)$ for each $v \in \W_\pp$.

\begin{theorem}
\label{lem:dualHilbertIntegral}
 For $\H= \big(L_2(\mD,(\W_\pp)_{\pp\in\mD})\big)$ we have 
 \begin{equation}\label{eq:rieszdualiso}
	\H' =  \left\{  \ell \in \prod_{\pp\in\mD} \W_\pp' \text{ $\mu$-measurable} \colon \int_{\pp \in \mD} \norm{\ell(\pp)}_{\W_\pp'}^2\,d\mu(\pp)<\infty  \right\}
 \end{equation}
	with inner product
	\[
	   \langle \ell, \tilde\ell\rangle_{\H'} = \int_\mD \langle \ell(\pp),\tilde\ell(\pp)\rangle_{\W_\pp'} \,d\mu(\pp),	 \quad \ell,\tilde\ell \in \H',
	\]
	and for each $\ell \in \H'$, 
	\begin{equation}\label{eq:rieszsum}
		 \langle \cR\ell, v \rangle_\H 
		  = \int_\mD \langle \cR_\pp \ell(\pp), v(\pp)\rangle_{\W_\pp} \,d\mu(\pp),
	\end{equation}
which says that for each $\ell\in \H'$
\be
\label{exchange}
\ell = \prod_{\pp\in\mD}\ell(\pp)\quad\text{such that}\quad (\cR\ell)(\pp)= \cR_\pp\ell(\pp),\quad \pp\in\mD.
\ee
\end{theorem}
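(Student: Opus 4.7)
The plan is to reduce the statement to a direct application of Theorem~\ref{thm:mes} to the family $(\W_\pp')_{\pp\in\mD}$, and then to derive the characterization of $\cR$ from the fiberwise Riesz isometries. Throughout, we regard each $\W_\pp'$ as a Hilbert space with the inner product $\lll \ell, \tilde \ell\rr_{\W_\pp'} := \lll \cR_\pp \ell, \cR_\pp \tilde\ell\rr_{\W_\pp}$, so that $\cR_\pp$ is an isometric isomorphism and the dual norm coincides with the usual operator norm.

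First I would verify that $(\W_\pp')_{\pp\in\mD}$ admits a fundamental sequence of $\mu$-measurable sections in the sense of Definition~\ref{def:meassect}. Starting from a fundamental sequence $(\xi_n)$ for $(\W_\pp)_\pp$, define $\eta_n(\pp) := \cR_\pp^{-1}\xi_n(\pp)$, i.e.\ the functional $w\mapsto \lll \xi_n(\pp), w\rr_{\W_\pp}$. By the Riesz isometry, $\lll \eta_n(\pp), \eta_m(\pp)\rr_{\W_\pp'} = \lll \xi_n(\pp), \xi_m(\pp)\rr_{\W_\pp}$, which is $\mu$-measurable by assumption; density of the finite linear combinations of $\eta_n(\pp)$ in $\W_\pp'$ follows from bijectivity of $\cR_\pp$ and the corresponding density on the primal side. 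Hence Theorem~\ref{thm:mes} applies and shows that the right-hand side of \eref{eq:rieszdualiso} is a Hilbert space with the stated inner product.

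Next I would exhibit the canonical isometric isomorphism $J$ between this Hilbert space and $\H'$. For $\ell$ in the right-hand side of \eref{eq:rieszdualiso} and $v\in\H$, one has $\ell(\pp)(v(\pp)) = \lll \cR_\pp \ell(\pp), v(\pp)\rr_{\W_\pp}$; measurability in $\pp$ follows by expanding $\cR_\pp\ell(\pp)$ and $v(\pp)$ against the fundamental sequence $(\xi_n)$ and using bilinear limits. Fiber Cauchy--Schwarz together with Cauchy--Schwarz in $L_2(\mD;\mu)$ yields
\[
(J\ell)(v) := \int_\mD \ell(\pp)(v(\pp))\,d\mu(\pp), \qquad |(J\ell)(v)| \le \|\ell\|_{\mathrm{RHS}}\,\|v\|_\H ,
\]
so $J\ell\in\H'$ with $\|J\ell\|_{\H'}\le \|\ell\|_{\mathrm{RHS}}$. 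Conversely, given $\Phi\in\H'$, the Riesz theorem in the Hilbert space $\H$ gives a unique $u_\Phi\in\H$ with $\Phi(v)=\lll u_\Phi, v\rr_\H = \int_\mD \lll u_\Phi(\pp), v(\pp)\rr_{\W_\pp}\,d\mu(\pp)$. Setting $\ell(\pp) := \cR_\pp^{-1} u_\Phi(\pp)$ produces a section whose $\mu$-measurability one checks via $\lll \ell(\pp), \eta_n(\pp)\rr_{\W_\pp'} = \lll u_\Phi(\pp),\xi_n(\pp)\rr_{\W_\pp}$, and whose fiberwise norm equals $\|u_\Phi(\pp)\|_{\W_\pp}$. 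Thus $\|\ell\|_{\mathrm{RHS}} = \|u_\Phi\|_\H = \|\Phi\|_{\H'}$ and $J\ell=\Phi$, so $J$ is a bijective isometry.

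Finally, \eref{eq:rieszsum} and hence \eref{exchange} are immediate: for $\ell\in\H'$ and $v\in\H$,
\[
\lll \cR\ell, v\rr_\H = \ell(v) = \int_\mD \ell(\pp)(v(\pp))\,d\mu(\pp) = \int_\mD \lll \cR_\pp\ell(\pp), v(\pp)\rr_{\W_\pp}\,d\mu(\pp),
\]
and comparison with the definition of $\lll\cdot,\cdot\rr_\H$, valid for all $v\in\H$, forces $(\cR\ell)(\pp) = \cR_\pp\ell(\pp)$ for $\mu$-a.e.\ $\pp$. The main obstacle in this plan is the careful bookkeeping of measurability: one must track that every operation (applying $\cR_\pp^{-1}$ fiberwise, evaluating duality pairings, extracting the representer from the Riesz theorem on $\H$) preserves the notion of $\mu$-measurable section built on the fundamental sequences, since the conclusion that the dual is itself a direct integral rests entirely on this functorial behaviour of measurability under the fiber Riesz identification.
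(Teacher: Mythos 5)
Your proposal is correct and follows essentially the same route as the paper's proof: both transport the fundamental sequence to $(\W_\pp')_{\pp\in\mD}$ via the fiber Riesz isometries $\eta_n(\pp)=\cR_\pp^{-1}\xi_n(\pp)$, and both obtain the section $\ell(\pp)=\cR_\pp^{-1}(\cR\ell)(\pp)$ from the global Riesz representation on $\H$, from which \eref{eq:rieszsum} and the norm identity $\|\ell(\pp)\|_{\W_\pp'}=\|(\cR\ell)(\pp)\|_{\W_\pp}$ follow. Your write-up is in fact slightly more complete, since you also verify the converse inclusion explicitly (that every square-integrable measurable section of duals defines, via $J$, a bounded functional on $\H$), a direction the paper leaves implicit.
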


\begin{proof}
	Let $\ell\in \H'$. Then for all $v\in \H$,  
\begin{multline*}
\ell(v)= \lll \cR \ell,v\rr_\H= \int_\mD \langle (\cR \ell)(\pp),v(\pp)\rangle_{\W_\pp}{d}\mu(\pp) \\ = \int_\mD \cR_\pp^{-1}(\cR \ell)(\pp)(v(\pp)) {d}\mu(\pp)
= \int_\mD  \ell_\pp(v(\pp)){d}\mu(\pp).
\end{multline*}
Defining $\ell_\pp := \cR_\pp^{-1}(\cR \ell)(\pp)\in \W_\pp'$ we therefore have
\begin{equation}
\label{eq:rieszsum1}
\langle \mathcal R \ell, v\rangle_\H 
= \int_\mD \langle \mathcal R_\pp \ell_\pp,v(\pp)\rangle_{\W_\pp}{d}\mu(\pp).\end{equation}
Given now a fundamental sequence of $\mu$-measurable sections $\phi_n(\pp)\in\W_\pp$,
let $\xi_n(\pp):= \cR_\pp^{-1}\phi_n(\pp)$, $n\in\N$. Since	$\cR_\pp$ are isometries
the $\xi_n(\pp)$ are dense in $\W_\pp'$. Noticing that
$\pp\mapsto \lll \xi_n(\pp),\xi_m(\pp)\rr_{\W_\pp'}= \lll \phi_n(\pp),\phi_m(\pp)\rr_{\W_\pp}$, it follows that the $\xi_n(\pp)$ form a fundamental system in $\W_\pp$, $\pp\in\mD$. Moreover,
$$
\lll \ell_\pp,\xi_n(\pp)\rr_{\W_\pp'}= \lll \cR_\pp^{-1}(\cR \ell)(\pp),\xi_n(\pp) \rr_{\W_\pp'}= \cR_\pp^{-1}(\cR \ell)(\pp)(\phi_n(\pp))=\lll 
(\cR \ell)(\pp),\phi_n(\pp)\rr_{\W_\pp}
$$
shows measurability of $\pp\mapsto  \lll \ell_\pp,\xi_n(\pp)\rr_{\W_\pp'}$.
 Thus $\ell$ can be identified with a $\mu$-measurable function $\ell\in\prod_{\pp\in\mD}\W'_\pp$ with $\ell(\pp)=\ell_\pp$.
In particular, \eqref{eq:rieszsum} follows from \eqref{eq:rieszsum1}. 
	Furthermore, we observe that $\|\ell(\pp)\|_{\W'_\pp} = \|(\cR\ell)(p)\|_{\W_\pp}$ 
	and   
$$
\int_\mD\|\ell(\pp)\|^2_{\W'_\pp}{d}\mu(\pp)  = \int_\mD\|(\cR\ell)(\pp)\|^2_{\W_\pp}{d}\mu(\pp)<\infty,$$
	and we thus obtain \eqref{eq:rieszdualiso}. %
\end{proof}

An application of these concepts to the families $(\U_\pp)_{\pp\in\mD}$, $(\V_\pp)_{\pp\in\mD}$ in \eref{eq:paramdepform}, where these spaces may differ essentially from each other,
  requires verifying the validity of the following

\begin{property}\label{ass:HilbertIntegral}
	For the families $(\U_p)_{p \in P}$,  $(\V_p)_{p \in P}$ of separable Hilbert spaces,   there exist fundamental sequences of $\mu$-measurable sections $(\phi_n)_{n\in\N}$ and $(\psi_n)_{n\in\N}$, respectively.
\end{property}
A convenient criterion that   applies in our context later below, can be formulated as follows.
\begin{prop}
\label{prop:cap}
	If $\bigcap_{\pp\in\mD}\U_\pp$
and  $\bigcap_{\pp\in\mD}\V_\pp$ are dense in each $\U_\pp, \V_\pp$, respectively, then Property \ref{ass:HilbertIntegral} is satisfied.
\end{prop}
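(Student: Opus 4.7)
The plan is to construct the two fundamental sequences of $\mu$-measurable sections out of \emph{constant} sections drawn from the dense intersections. Writing $D_\U := \bigcap_{\pp\in\mD}\U_\pp$ and $D_\V := \bigcap_{\pp\in\mD}\V_\pp$, I would pick a countable subset $\{\phi_n\}_{n\in\N}\subset D_\U$ and set $\xi_n(\pp) := \phi_n$ for every $\pp\in\mD$, and analogously choose $\{\psi_n\}_{n\in\N}\subset D_\V$ to form constant sections for the test-side family. If the $\phi_n$ can be chosen so that their finite linear combinations are dense in every $\U_\pp$ \emph{simultaneously}, then the density clause of Definition \ref{def:meassect} is satisfied for free, and only the $\mu$-measurability of the Gram functions remains.

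The extraction of a universal countable dense family from $D_\U$ is where the hypothesis does its work. For each fixed $\pp$, separability of $\U_\pp$ together with density of $D_\U$ in $\U_\pp$ gives a countable subset of $D_\U$ that is $\|\cdot\|_{\U_\pp}$-dense. To amalgamate these fiberwise choices into one $\pp$-independent sequence, I would exploit the fact that, in the intended framework, $D_\U$ inherits from the common ambient function space on the spatio-temporal domain a separable topology (for instance a Fr\'echet or LF-topology arising from spaces of test functions, or an exhaustion by finite-dimensional subspaces) that embeds continuously into each $\U_\pp$. A countable dense subset of $D_\U$ in this finer topology is then automatically dense in every $\U_\pp$ by continuity of the embedding, providing the desired $\{\phi_n\}$; the same argument applied to $D_\V$ produces $\{\psi_n\}$.

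What is left is verifying $\mu$-measurability of $\pp\mapsto\lll \xi_n(\pp),\xi_m(\pp)\rr_{\U_\pp} = \lll \phi_n,\phi_m\rr_{\U_\pp}$ and its analogue on the $\V$-side. Since in the parametric PDE setting each inner product $\lll\cdot,\cdot\rr_{\U_\pp}$ is induced by a bilinear form whose coefficients depend $\mu$-measurably on $\pp$, and since $\phi_n,\phi_m$ are fixed elements of the intersection, this follows directly from composition of measurable maps. The same applies to the test-side family via the constants $\psi_n\in D_\V$.

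The main obstacle is the amalgamation step in the second paragraph: passing from fiberwise density of $D_\U$ to a single, universally dense countable subset is not a purely set-theoretic consequence of the stated hypothesis and requires the implicit structural assumption on $D_\U$ just described. Once this separability is granted, the verification of both clauses of Definition \ref{def:meassect} for both families is routine, and Property \ref{ass:HilbertIntegral} follows.
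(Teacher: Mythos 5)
The paper states Proposition \ref{prop:cap} without any proof, so there is no official argument to measure yours against; your constant-section construction is almost certainly what the authors intend, and it is the natural route. Your proposal is sound as far as it goes, and the two caveats you raise are real — but note that they are gaps in the \emph{statement} of the proposition rather than in your argument. First, as you say, fiberwise density of $D_\U=\bigcap_{\pp\in\mD}\U_\pp$ in each separable $\U_\pp$ yields for each $\pp$ a countable $\|\cdot\|_{\U_\pp}$-dense subset of $D_\U$, but amalgamating these over an uncountable $\mD$ into one universal countable family genuinely needs the extra structure you posit (a separable topology on $D_\U$ dominating all the fiber norms). This is exactly what the paper exploits in its one concrete verification, Lemma \ref{lem:V_pDirectIntegral}: there the role of your ``finer separable topology'' is played by $H^1(\Omega)$, which Lemma \ref{lem:H1} embeds continuously and densely into every graph space $\G^*_\pp$, and a single countable total system $\{\zeta_n\}$ of $H^1(\Omega)$ is then transported into the fibers (in that case not even by constant sections but via a $\pp$-dependent cutoff $\phi(\cdot;\pp)\zeta_n$, because the outflow boundary condition moves with $\pp$).

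Second, and more seriously, the measurability clause of Definition \ref{def:meassect} cannot follow from the stated hypothesis alone: take $\U_\pp=H^1(\Omega)$ as a set for all $\pp$ with $\|u\|_{\U_\pp}^2=\|u\|_{L_2}^2+a(\pp)\|\nabla u\|_{L_2}^2$ for a non-measurable $a\colon\mD\to[1,2]$; the intersection is all of $H^1(\Omega)$ and trivially dense in each fiber, yet $\pp\mapsto\|\phi\|_{\U_\pp}^2$ is non-measurable for any non-constant $\phi$, so Property \ref{ass:HilbertIntegral} fails. Your appeal to ``coefficients depending $\mu$-measurably on $\pp$'' is therefore not a routine verification but an indispensable additional hypothesis (supplied in the paper's applications by \eqref{UE} or Assumption \ref{ass:vectorfield}). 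In short: your proof is the right one, provided the proposition is read with the implicit assumptions you make explicit; as literally stated, the proposition is not provable, and it would be worth recording both supplementary hypotheses when invoking it.
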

This observation is of interest also in the following   respect. The above notion of direct integral 
is in some sense very flexible and permits counterintuitive facts. For instance, if
 some family of Hilbert spaces $\W_\pp$, $\pp \in \mD$, gives rise to a direct integral   $L_2(\mD,(\W_\pp)_{\pp\in\mD})$, not every choice of closed subspaces $\V_\pp\subset\W_\pp$ induces a direct integral structure $L_2(\mD,(\V_\pp)_{\pp\in\mD}) \subset L_2(\mD,(\W_\pp)_{\pp\in\mD})$. We refer the interested reader to Example 
 \ref{ex:B} in Appendix B.  
 
However, under the assumption in Proposition \ref{prop:cap}, the expected subspace relation holds. The precise circumstances are given in Proposition \ref{prop:subspace}
that can also be found in Appendix B.

\subsection{The lifted problem and its well-posedness}\label{sssec:lifted}

We are now ready to reformulate \eref{eq:paramdepform} as a single linear operator equation
\be
\label{lB}
\cB u = F,
\ee
involving functions of spatial and parametric variables, so as to ensure validity of a
tight error-residual relation.

In general, for such a linear operator equation, the question for which data $F$ \eref{lB} has a unique solution $u$ in a suitable trial space $\X$,
 depends on the {\em mapping properties}
of $\cB$, viewed as a mapping from some (infinite-dimensional) {\em trial space} $\X$ onto a suitable target space, accommodating the data $F$. If 
Property \ref{ass:HilbertIntegral} holds, which we assume throughout the remainder of this section, natural candidates for a reformulation of \eref{eq:paramdepform} are
the spaces %
\begin{equation}\label{XY}
	\X = L_2\bigl(\mD, (\U_\pp)_{\pp \in \mD} \bigr), 
	\qquad \Y = L_2\bigl(\mD, (\V_\pp)_{\pp \in \mD} \bigr)
\end{equation}
with respective fundamental sequences $(\phi_n)_{n\in\N},(\psi_n)_{n\in\N}$.
In addition, we use the following property that needs to be verified for the given problem.

\begin{property}
\label{ass:bil}
	Assuming Property \ref{ass:HilbertIntegral}, for each $\pp \in \mD$, let $b(\cdot,\cdot;\pp)\colon \U_\pp \times \V_\pp \to \R$ be a bounded bilinear form such that for all $n, m \in \N$, the mapping $
	   \mD \ni \pp \mapsto 	b(\phi_n(\pp), \psi_m(\pp); \pp)
	$
	is measurable.
\end{property}
Under these conditions, the quantities  
\begin{equation}\label{eq:paramweak}
  b(u,v):= 
    \int_\mD b\bigl(u(\pp), v(\pp); \pp \bigr)\,d\mu(\pp),
	\qquad F(v) = \int_\mD f\bigl(v(\pp)\bigr)\,d\mu(\pp),
\end{equation}
are well-defined. Solving \eref{lB} for $\cB$, defined by $(\cB w)(v)= b(w,v)$, $w\in \X$, $v\in \Y$,
 boils down  to finding $u\in \X$ such that for $F\in \Y'$ 
 \be
\label{wlB}
b(  u,v) = F(v),\quad \forall\, v\in \Y.
\ee
As has been used already for the fiber problems, well-posedness of \eref{wlB} is 
{\em characterized} by the Babu\v{s}ka-Ne\v{c}as Theorem through the existence
of constants
$0<c_b\le C_b<\infty$ such that
\be
\label{inf-sup}
\begin{array}{c}
\displaystyle\sup_{w\in\X}\sup_{v\in\Y}\frac{b( w,v)}{\|w\|_\X \|v\|_\Y}\le C_b,\quad
 \inf_{w\in\X}\sup_{v\in\Y} \frac{b( w,v)}{\|w\|_\X \|v\|_\Y}\ge c_b,\\[18pt]
\text{for each $v\in \Y\,\,\exists\, w_v\in \X$ such that $b( w_v,v) \neq 0$.}
\end{array}
 \ee
 \begin{rem}\label{rem:dualinfsup}
 An equivalent set of condition is obtained when replacing the surjectivity condition in the second line by an inf-sup condition with the roles of  $\X$ and $\Y$ being interchanged, since this
 is equivalent to the dual $\cB':\Y\to \X'$ being an isomorphism.
 \end{rem}

Here we emphasize the equivalence of \eref{inf-sup} to the fact that $\cB:\X\to\Y'$
is an {\em isomorphism} which in turn means that
\be
\label{err-res}
c_b\|u- w\|_\X\le \|\cB w - F \|_{\Y'}\le C_b\|u- w\|_\X, \quad \forall \, w\in \X,
\ee
Hence $\cB:\X\to\Y'$ has a bounded {\em condition number}
\be
\label{condition}
\kappa_{\X,\Y}(\cB) := \|\cB\|_{\X\to\Y'}\|\cB^{-1}\|_{\Y'\to\X}\le \frac{C_b}{c_b},
\ee
so that the smaller the ratio $C_b/c_b$ of continuity and inf-sup constant the tighter
is the relation of an error in $\X$ to the residual in $\Y'$.
We refer to Section \ref{ssec:3.1} for corresponding strategies for identifying
suitable pairs of trial and test spaces, including cases where  $\frac{C_b}{c_b}=1$, that is, the variational problem has an ideal condition.

We are now ready to clarify how well-posedness of the fiber problems \eref{eq:paramdepform} relates to well-posedness of the lifted problem \eref{wlB}. 
\begin{theorem}
\label{thm:suff}
If Properties \ref{ass:HilbertIntegral} and \ref{ass:bil} hold,
the operator $\cB\colon \X\to \Y'$ defined in \eqref{eq:paramweak}, with $\X,\Y$ as in \eqref{XY}, is an isomorphism if and only if there exist uniform constants $c_b, C_b>0$ such that for $\mu$-almost all $\md \in \mD$, the conditions \eref{inf-sup-pp} hold.
In this case $\cB$ satisfies \eqref{inf-sup} with the same constants $c_b,C_b$.
\end{theorem}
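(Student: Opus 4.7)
The plan is to reduce everything to a fiberwise identity for $\|\cB w\|_{\Y'}$. Under Properties \ref{ass:HilbertIntegral} and \ref{ass:bil}, I first verify that for every $w\in\X$ the functional $\cB w\in\Y'$ corresponds, under the identification \eqref{exchange}, to the measurable section $\pp\mapsto\cB_\pp w(\pp)\in\V_\pp'$: measurability is obtained by pairing with the fundamental sequence $(\psi_m)$ and invoking the measurability hypothesis of Property \ref{ass:bil}. Theorem \ref{lem:dualHilbertIntegral} then yields the identity
\begin{equation}\label{normid}
\|\cB w\|_{\Y'}^2 \,=\, \int_\mD \|\cB_\pp w(\pp)\|_{\V_\pp'}^2 \, d\mu(\pp).
\end{equation}
By the Babu\v{s}ka--Ne\v{c}as theorem, the fiber conditions \eref{inf-sup-pp} are equivalent to $c_b\|w\|_{\U_\pp}\le \|\cB_\pp w\|_{\V_\pp'}\le C_b\|w\|_{\U_\pp}$ together with surjectivity of $\cB_\pp$, and analogously at the lifted level, so everything reduces to two-sided estimates for the integrand of \eqref{normid}.

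For the sufficiency direction, the two-sided fiber bound, integrated via \eqref{normid}, directly yields $c_b\|w\|_\X\le \|\cB w\|_{\Y'}\le C_b\|w\|_\X$, so \eqref{inf-sup} holds with the same constants $c_b, C_b$. Surjectivity of $\cB$ is obtained by defining, for each $F\in\Y'$, the fiberwise solution $u(\pp):=\cB_\pp^{-1}F(\pp)$; measurability of this section follows from a Galerkin-type approximation using the fundamental sequence, and the bound $\|u(\pp)\|_{\U_\pp}\le c_b^{-1}\|F(\pp)\|_{\V_\pp'}$ places $u\in\X$ with $\cB u=F$.

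The necessity direction is the main obstacle: passing from the integral bound \eqref{inf-sup} (via \eqref{normid}) to pointwise a.e.\ fiber bounds. For each $n,k\in\N$ and measurable $A\subset\mD$ of finite measure, testing \eqref{inf-sup} with $w=\chi_{A_k}\phi_n$ where $A_k := A\cap\{\|\phi_n(\pp)\|_{\U_\pp}\le k\}$ (which lies in $\X$) and invoking \eqref{normid} gives
\[
c_b^2\int_{A_k}\|\phi_n(\pp)\|_{\U_\pp}^2\,d\mu \;\le\; \int_{A_k}\|\cB_\pp\phi_n(\pp)\|_{\V_\pp'}^2\,d\mu \;\le\; C_b^2\int_{A_k}\|\phi_n(\pp)\|_{\U_\pp}^2\,d\mu.
\]
Since $A$ is arbitrary and $A_k\uparrow\mD$ modulo null sets, the standard fact that $\int_A h\,d\mu\ge 0$ for every measurable $A$ forces $h\ge 0$ a.e., yielding the pointwise estimate $c_b\|\phi_n(\pp)\|_{\U_\pp}\le\|\cB_\pp\phi_n(\pp)\|_{\V_\pp'}\le C_b\|\phi_n(\pp)\|_{\U_\pp}$ outside a countable union of null sets. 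Density of $(\phi_n(\pp))$ in $\U_\pp$ propagates this to all $w\in\U_\pp$ with the same constants $c_b, C_b$. Finally, the third condition in \eref{inf-sup-pp} follows by solving $\cB u_n=\xi_n$ for each fundamental section $\xi_n$ in $\Y'$: the resulting $u_n$ satisfy $\cB_\pp u_n(\pp)=\xi_n(\pp)$ a.e., and density of $(\xi_n(\pp))$ in $\V_\pp'$, combined with the established inf-sup, gives surjectivity of each $\cB_\pp$.
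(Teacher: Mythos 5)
Your overall strategy is sound and close in spirit to the paper's: both arguments hinge on identifying $\cB w$ with the measurable section $\pp\mapsto\cB_\pp w(\pp)$ via Theorem \ref{lem:dualHilbertIntegral}, so that $\|\cB w\|_{\Y'}^2=\int_\mD\|\cB_\pp w(\pp)\|_{\V_\pp'}^2\,d\mu(\pp)$, and then on transferring two-sided bounds between the fiber and lifted levels. For necessity, the paper argues by contradiction with indicator functions of ``bad sets'' of positive measure, whereas you localize by testing with truncated fundamental sections; these are interchangeable. However, there is a genuine gap in your necessity argument. You establish the pointwise a.e.\ bounds $c_b\|\phi_n(\pp)\|_{\U_\pp}\le\|\cB_\pp\phi_n(\pp)\|_{\V_\pp'}\le C_b\|\phi_n(\pp)\|_{\U_\pp}$ for each \emph{individual} $\phi_n$, and then claim that ``density of $(\phi_n(\pp))$ in $\U_\pp$ propagates this to all $w\in\U_\pp$.'' This step fails: by Definition \ref{def:meassect} it is the finite \emph{linear combinations} of the $\phi_n(\pp)$ that are dense, and a two-sided norm equivalence valid on individual vectors does not extend to their span (e.g.\ $\|\cB_\pp(\phi_1+\phi_2)\|$ is not controlled by $\|\phi_1+\phi_2\|_{\U_\pp}$ from bounds on $\phi_1$ and $\phi_2$ separately). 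The repair is routine but must be stated: run your truncation argument over the countable family of all finite \emph{rational} linear combinations $\sum_j q_j\phi_{n_j}$, obtaining the two-sided bound on a dense subspace outside a single countable union of null sets, and only then pass to the closure using the boundedness of $b(\cdot,\cdot;\pp)$ from Property \ref{ass:bil}.

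A second, smaller issue is the surjectivity step in the sufficiency direction: you define $u(\pp):=\cB_\pp^{-1}F(\pp)$ and assert that measurability of this section ``follows from a Galerkin-type approximation,'' but this is precisely the nontrivial point — one must show $\pp\mapsto\langle\cB_\pp^{-1}F(\pp),\phi_n(\pp)\rangle_{\U_\pp}$ is measurable, which requires an actual argument. The paper avoids this entirely by never constructing the solution: it verifies the third Babu\v{s}ka--Ne\v{c}as condition through the dual inf-sup condition (Remark \ref{rem:dualinfsup}), applying the same Riesz-lift computation to $\cB_\pp'$, for which the fiberwise isomorphy of the adjoints is already available. You should either adopt that route or supply the measurability argument explicitly.
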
 

\begin{proof}
First, let \eqref{inf-sup-pp} hold for $\mu$-almost all $\pp\in \mD$. 
This is equivalent to $\cB_\pp \colon \U_\pp \to \V_\pp'$ defined by 
$\cB_\pp w = b(w, \cdot; \pp) \in \V_\pp'$ being an isomorphism for $\mu$-almost all $\pp \in \mD$ with 
\[ c_b \norm{ w }_{\U_\pp} \leq \norm{\cB_\pp w}_{\V_\pp'} = \sup_{0\neq v \in \V_\pp} \frac{b(w, v; \pp)}{\norm{v}_{\V_\pp}} \leq C_b \norm{w}_{\U_\pp} \quad \text{for all $w \in \U_\pp$.}\] 
Moreover, for $v \in \Y$,
\[
   b(w, v)  \leq \int_\mD C_b \norm{w(\pp)}_{\U_\pp}\norm{v(\pp)}_{\V_\pp}  \,d\mu(\pp) \leq C_b \norm{w}_\X \norm{v}_\Y,
\]
confirming continuity of $b(\cdot,\cdot)$.
Regarding the inf-sup condition, we note that by boundedness and measurability of $b(\cdot,\cdot)$, we have for $w\in\X$ that  $$\cB w = b(w,\cdot)\in\Y'$$ and thus
 we have, by Lemma \ref{lem:dualHilbertIntegral}, 
 $$\langle \mathcal R\cB w,v\rangle =\int_\mD b(w(\pp),v(\pp),\pp){d}\mu(\pp) = \int_\mD \langle \mathcal R_\pp\cB_\pp w(\pp),v(\pp)\rangle_{\V_\pp}{d}\mu(\pp).
  $$
Therefore,   $v = \mathcal R \cB w$ satisfies
 \[
\begin{aligned}
	b( w, v) &= \int_\mD b(w(\pp),  \mathcal{R}_\pp\cB_\pp w(\pp); p)\,d\mu(\pp)  
	 = \int_\mD \norm{ \cB_\pp w(\pp)}_{\V_\pp'}^2\,d\mu(\pp) 	\\
	& \geq c_b \norm{w}_\X \biggl( \int_\mD \norm{ v(\pp)}_{\V_\pp}^2\,d\mu(\pp) \biggr)^{1/2} .
\end{aligned}
\]
The third condition in \eqref{inf-sup} then follows in the same manner by Remark \ref{rem:dualinfsup}, using that also $\cB_\pp'$ is an isomorphism for $\mu$-almost all $\pp\in\mD$.

To prove the converse
	assume now that there are constants $c_b,C_b$ such that \eqref{inf-sup} holds but there are no uniform lower and upper bounds for $c_b(\pp), C_b(\pp)$ in \eqref{inf-sup-pp}. 
	Observe that as $\hat w_b = \cB  w=b(w,\cdot)\in\Y'$ for all $w\in\X$ we have measurability of 
	$$\pp\mapsto \|\hat w_b(\pp)\|_{\V'_\pp} = \sup_{v\in\V_\pp}\frac{b( w(p),v;\pp)}{\|v\|_{\V_\pp}}= \|(\cB w)(\pp)\|_{\V_\pp'} .
	$$
	Consider the set $P_{c_b,w} = \{ \pp\in\mD: \|\hat w_b(p)\|_{\V'_\pp}<\frac{c_b}{2}\|w(\pp)\|_{\U_\pp}  \}. $ If $\mu(P_{c_b,w})=0$ for all $w\in\X$ there is a uniform lower bound. Else take $w\in\X$ such that $\mu(P_{c_b,w})>0$. Then choose 
$$
	v_w = \mathcal R (\hat w_b\mathbf{1}_{P_{c_b,w}})  \in\Y,
$$
	where $\mathbf{1}_{P_{c_b,w}}$ is the indicator function of ${P_{c_b,w}}$.
Then we have, by definition of  $P_{c_b,w}$,
$$
	b(w\mathbf{1}_{P_{c_b,w}},v_w) =\|\hat{w}_b\mathbf{1}_{P_{c_b,w}}\|_{\Y'}
	< \frac{c_b}2 \|w\mathbf{1}_{P_{c_b,w}}\|_\X 
	\|v_w\|_\Y.
$$
Since $w\in\X$ implies $w\mathbf{1}_{P_{c_b,w}}\in\X$ and 
\[ \frac{b(w\mathbf{1}_{P_{c_b,w}},v_w)}{\|v_w\|_\Y}= \sup_{v\in\Y}\frac{b(w\mathbf{1}_{P_{c_b,w}},v )}{\|v\|_\Y}  , \]
 we arrive at a contradiction to the inf-sup condition  \eqref{inf-sup} for $b(\cdot,\cdot)$.

Next we   use Remark \ref{rem:dualinfsup} and note that interchanging the roles of $w$ and $v$, the same reasoning yields the contradiction 
$$
	\inf_{v\in\Y}\sup_{w\in\X}\frac{b(w,v)}{\|w\|_\X\|v\|_\Y}<  \frac{c_b}{2}.
$$
	Finally, defining the set 
$$
Q_{C_b,w} = \{ \pp\in\mD: \|\hat w_b(p)\|_{\V'_\pp}> 2 C_b\|w(p)\|_{\U_\pp}  \}. 
$$
we derive in the same fashion for any $w\in\X$ such that $\mu({Q_{C_b,w}})>0$,	that 
$$  
b(w\mathbf{1}_{Q_{C_b,w}},v_w) =b(w\mathbf{1}_{Q_{C_b,w}},\mathcal R (\hat w_b \mathbf{1}_{Q_{C_b,w}}))> 2C_b\|w\mathbf{1}_{Q_{C_b,w}}\|_\X\|v_w\|_\Y,
$$
finishing the proof.	
\end{proof}

Recall that the integration over $\mD$ is replaced in computations by a weighted summation
over discrete subset $\wh\mD\subset \mD$. Introducing corresponding analogous objects
$\wh b(\cdot,\cdot), \wh\cB, \wh F$ in \eref{eq:paramweak}, this suggests
  defining in analogy to \eref{XY} the {\em parameter-discrete}
direct sum Hilbert spaces
\be
\label{XYhat}
\wh\X =  \ell_2\bigl(\wh\mD;(\U_\md)_{\pp \in \wh\mD}\bigr), \quad \wh\Y = \ell_2\bigl(\wh\mD;(\V_\md)_{\pp \in \wh\mD}\bigr).
\ee
We record the following trivial fact.
\begin{rem}
\label{rem:wh}
When the fiber problems \eref{eq:paramdepform} satisfy \eref{inf-sup-pp} uniformly in $\pp$
 the problem $\wh b(u,v)=\wh F(v)$
for all $v\in \wh\Y$ is well-posed for any subset $\wh\mD$ of $\mD$ as well.
\end{rem}

In summary, once \eref{inf-sup} has been established, the continuous and parameter-discrete error-residual relations
\be
\label{upshot}
\|u- w\|_\X\eqsim \|F-\cB w\|_{\Y'}, \quad \|u- w\|_{\wh\X}\eqsim \|F-\cB w\|_{\wh\Y'},
\quad w\in \X,
\ee
suggest viewing $\|F-\cB w\|_{\Y'}$, $\|F-\cB w\|_{\wh\Y'}$ as {\em ideal} loss functions.
We proceed discussing next the numerical evaluation and approximation of such ideal loss functions.

\subsection{Obstructions and main goal}\label{sssec:Riesz}

In a classical (low-dimensional) finite-element context, once \eref{inf-sup} has been
established, one then proceeds solving
a discretized version of the  linear problem \eref{wlB}, typically in terms of a
Galerkin or Petrov-Galerkin schemes. The quantity $\|\cB w - F\|_{\Y'}$ can be viewed as
an ideal a posteriori error bound that is efficient as well as reliable. 
Such a reduction to large linear algebra problems seems less viable when using nonlinear
global trial systems. In such scenarios the reformulation of a PDE as an optimization problem becomes relevant.

There are many ways of reformulating PDEs as a variational problems. For elliptic problems, {\em energy minimization} suggests itself because
it naturally respects intrinsic model metrics. This works for a restricted class and
corresponding loss functions, however, the value of the energy at termination does not reflect the remaining estimation error. 

Alternatively for linear problems the above considerations show that $u$ solves \eref{wlB} if and only if minimizes   the mean square loss
\be
\label{minres}
u = \argmin_{w\in \X}\|\cB w - F \|^2_{\Y'},
\ee
which is not at all restricted to elliptic problems.
The advantage of this choice is that
at any stage of a minimization over a finitely parameterized hypothesis class 
$\cH(\Theta)$, determined by a budget $\Theta$ of trainable weights, by \eref{err-res}
the size of the $\|\cB\uN(\theta) -F\|_{\Y'}$ is indeed uniformly proportional to the error
$\|u-\uN(\theta)\|_\X$ and hence complies with our quest for variational correctness,
see \eqref{VC} in Section \ref{sec:intro}. Moreover, minimizing $\|\cB w - F \|^2_{\Y'}$ over
any given finitely parameterized hypothesis class retains quasi-best approximation properties:
\be
\label{nearbest}
u_\cH \in \argmin_{w\in \cH}\|\cB w-F\|_{\Y'}\quad\Rightarrow\quad \|u- u_\cH\|_\X
\le \frac{C_b}{c_b} \min_{w\in \cH}\|u- w\|_\X.
\ee

Of course, there are obvious practical issues with \eref{err-res}, and hence with \eref{minres} and \eref{nearbest}, in the finite element context as well as in our
present setting: unless $\Y=\Y'$
is self-dual (a product of $L_2$-spaces), the quantity $\|\cB u -F \|_{\Y'}$
involves a supremum over an infinite-dimensional Hilbert space
\be
\label{wrinkle}
\|\cB w - F \|_{\Y'} = \sup_{v\in\Y}\frac{(\cB w - F)( v) }{\|v\|_\Y}= \sup_{v\in\Y}\frac{b(  w,v) - F( v) }{\|v\|_\Y} \,,
\ee
which cannot be evaluated exactly. Hence, \eref{minres} in the form
\be
\label{minmax}
\theta^*\in \argmin_{\theta\in\Theta}\max_{v\in \Y}\frac{b( \uN(\theta),v) - F(v)  }{\|v\|_\Y}
\ee
is practically infeasible. This leads to our {\bf central aim} of \vspace*{0.8mm} 
\begin{center}devising {\em computable} expressions that are {\em uniformly} proportional\\[1.5mm] to
$\| \cB w - F \|_{\Y'}$, and hence to the error $\|u-w\|_\X$. 
\end{center}
\vspace*{0.7mm}
We can hope to achieve this up to unavoidable {\em data oscillation errors}, which may occur whenever the data are subjected to a projection to a finitely parameterized subset of $\Y'$.

A first natural strategy for tightly approximating $\|\cB w - F\|_{\Y'}$ is to build directly on \eref{minmax} by restricting the maximization to
some finite set $\Psi$ of trainable parameters, so that \eref{minmax} becomes 
\be
\label{GAN1}
\theta^*\in \argmin_{\theta\in \Theta}\max_{\psi\in\Psi}\frac{b( \uN(\theta),\aN(\psi)) - F(\aN(\psi)) }{\|\aN(\psi)\|_\Y}.
\ee
In the language of \emph{generative adversarial networks} (GANs) the approximation $\uN$ to the solution $u\in \X$ plays the role of a {\em generative network} while $\aN$ is the {\em discriminator} or {\em adversarial network} that is to make sure $\uN$ obeys the correct optimization criterion. 

From a practical perspective,
it seems natural to alternatingly perform gradient descent and ascent steps on the quotient in \eqref{GAN1}. This approach has been followed for special cases, namely for elliptic problems and Friedrichs systems in \cite{ZBYZ,BYZZ,Friedrichs,Pardo,Bertol,Canuto}.  One senses from the reported results that the success of this 
strategy depends strongly on the inner maximization problem being solved with sufficient accuracy. 

To understand this and to pave the way for alternatives,
recall the mapping $\cR = \cR_\Y \colon \Y' \to \Y$ defined in \eqref{Riesz}, which for $\ell \in \Y'$ is given by
\[
(\cR \ell,v)_\wtY = \ell(v),\quad v\in \wtY.
\]
Thus the \emph{Riesz lift} $\cR\ell \in \Y$ of $\ell \in \Y'$ is the solution of an {\em elliptic} variational problem
in the test space $\Y$, regardless of the type of the primal problem \eref{par}. As an immediate consequence of \eqref{Riesz},
 \[  \|\cR(\cB w - F)\|_{\Y} = \argmax_{v \in \Y} \frac{(\cB w - F)( v)}{\|v\|_\Y}. \]
Moreover, one readily checks the relation
\be 
\label{peak}
\|\cR \ell\|_\wtY^2 = \ell(\cR\ell)
= \|\ell\|_{\wtY'}^2,\qquad \ell\in \Y'.
\ee
\begin{rem}
\label{rem:Riesz}
Thus, assuming that the $\Y$-norm can be evaluated directly, finding an expression that is uniformly proportional to $\|\cB w - F\|_{\Y'}$ amounts to evaluating the Riesz lift of $\cB w - F$ up to a {\em uniform relative error}. 
\end{rem}
Specifically, whenever an approximation $v_w\in \Y$ to $\cR(\cB w-F)$ satisfies
\be
\label{rela}
\|\cR(\cB w-F)-v_w\|_\Y\le a\| \cB w-F\|_{\Y'} \quad \mbox{for some $a<1$},
\ee
one readily derives from \eref{peak} that
\be
\label{goodloss}
(1-a)\|\cB w-F\|_{\Y'}^2 \le (\cB w-F)(v_w)\le \|\cB w-F\|_{\Y'}^2,
\ee
so that $(\cB w-F)(v_w)$ is (up to quadrature) a computable variationally correct loss.
This shows though that, with an improved accuracy of the generative network, the demands on the adversarial network increases as it has to become a better and better approximation
to the Riesz lift of the residual. This explains, in particular, why
 working with fixed budgets for the generative and adversarial networks is problematic
 especially in view of an uncertain optimization success.

\section{Computable Variationally Correct Loss Functions}\label{sec:3}
\subsection{Conceptual prerequisites}\label{ssec:3.1}

Given an operator equation \eref{lB} in strong form, there   usually exist  several different pairs of trial and test spaces $\U,\V$ that give rise to a stable variational formulation.
In this section we discuss how to exploit this freedom to devise 
computable variationally correct loss functions. In what follows 
we use the notation $L_2$ to denote an $L_2$-type space by which mean in general
a product of $L_2$-spaces. Specifically, we present two scenarios 
that
can be viewed as extreme points in a spectrum of possibilities.
To explain this we suppress first any dependence on parameters $\pp$.

We emphasize that both scenarios refer to the (linear) PDE $\, \cB u=f$ 
in the form of a {\em first order system} where $\cB: {\rm dom}\,\cB\to L_2$ is a closed operator. If the PDE is initially given
as a 
higher order PDE the first step in both cases is to rewrite it as a first order
system (for simplicity again denoted by $\cB =f$), see the examples in subsequent sections.
For simplicity of exposition we assume that arising essential boundary conditions are
homogeneous. 

Recall further that the graph space of $\cB$ is defined by
$\G(\cB):= \{w\in L_2: \cB w\in L_2\}$ and $\|w\|_{\G(\cB)}^2:= \|w\|^2_{L_2}+ \|\cB w\|^2_{L_2}$.   Moreover, let $\G(\cB)_0$ denote the (closed) subspace of $\G(\cB)$ with
built-in essential boundary conditions.

We consider then two choices of the trial space $\U$,
\begin{align}
\tag{S} \U & = \G(\cB)_0 &  & \text{(strong formulation)} \qquad\qquad\label{eq:sf} \\
\tag{UW}  \U &= L_2 &  &\text{(ultra-weak formulation)} \label{eq:uwf}
\end{align}
and then look for respective test spaces $\V$ that give rise to a stable variational formulation.  

As explained in more detail later, \eqref{eq:sf} aims at having an $L_2$-least squares functional
as loss function, which is therefore easily evaluated (up to quadrature errors). 
In fact, residuals belong to $L_2$ because $\cB$ maps $\U$, by definition of the graph space, boundedly into $L_2$.

Instead, \eqref{eq:uwf} accepts an idealized loss function in terms of a  nontrivial dual norm, but,
as explained below, 
offers advantages in case of an essential parameter dependence of the trial spaces $\U_\pp$ in \eref{XY}. Moreover, in combination with discontinuous Petrov Galerkin (DPG) concepts, it still
allows one to devise computable explicitly given variationally correct loss functions.  

We pause to point out how the underlying   seemingly very different scenarios   are interrelated.
\begin{prop}
\label{rem:I} 
Let $\U\subseteq \G(\cB)_0$ be a closed subspace. Then the mapping $\cB :\U \to L_2$ is bijective if and only if
for every $f\in L_2$, there exist a $u\in \U$ such that 
\be
\label{stabvar}
b(u,v):= (\cB u)(v)= f(v)\quad \forall\, v\in L_2,
\ee
and
\be
\label{L2stabvar}
\|\cB w- f\|_{L_2}\eqsim \|w- u\|_\U,\quad w\in \U.
\ee
In this case, \eref{stabvar}
is a stable variational formulation and $\U=\G(\cB)_0$.  %
\end{prop}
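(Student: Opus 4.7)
The plan is to peel off the two directions of the equivalence separately and then promote $\U$ to the full graph space via the injectivity of $\cB$ on $\G(\cB)_0$. The Hilbert-space structure of $\U$ under the graph norm together with the bounded inverse theorem will do most of the heavy lifting.

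For the forward direction, since $\cB$ is closed and $\U$ is closed in $\G(\cB)$, the space $\U$ endowed with the graph norm is complete and $\cB : \U \to L_2$ is bounded by the very definition of that norm. Bijectivity together with the bounded inverse theorem then yields a constant $C$ such that $\|w\|_\U \le C \|\cB w\|_{L_2}$ for all $w \in \U$; combined with the trivial bound $\|\cB w\|_{L_2} \le \|w\|_\U$ built into the graph norm, this gives the equivalence $\|\cB w\|_{L_2} \eqsim \|w\|_\U$. Setting $u = \cB^{-1} f$ for arbitrary $f \in L_2$ produces the element required in \eref{stabvar} (upon identifying $L_2$ with its dual, so that $(\cB u)(v) = (f,v)_{L_2} = f(v)$), and applying the norm equivalence to the difference $w - u \in \U$ immediately yields \eref{L2stabvar}.

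For the converse, the existence clause in \eref{stabvar} supplies surjectivity of $\cB : \U \to L_2$, while injectivity is forced by the lower bound implicit in \eref{L2stabvar}: if $w_1, w_2 \in \U$ both satisfy $\cB w_1 = \cB w_2 = f$, then \eref{L2stabvar} applied with $u = w_1$ gives $\|w_1 - w_2\|_\U \lesssim \|\cB w_2 - f\|_{L_2} = 0$, whence $w_1 = w_2$. Thus $\cB : \U \to L_2$ is bijective, and the equivalence is established.

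Finally, to obtain $\U = \G(\cB)_0$, I plan to invoke the well-posedness assumption implicit in the first-order system framework, namely that the homogeneous problem admits only the trivial solution in $\G(\cB)_0$ (equivalently, $\cB$ is injective on the ambient graph space, a property encoded by the homogeneous essential boundary conditions). Given any $w \in \G(\cB)_0$, the surjectivity of $\cB|_\U$ just established yields $u \in \U \subseteq \G(\cB)_0$ with $\cB u = \cB w$, and injectivity on $\G(\cB)_0$ forces $w = u \in \U$; together with the standing inclusion $\U \subseteq \G(\cB)_0$ this gives equality. The main subtlety I foresee lies exactly in this last step: the identification $\U = \G(\cB)_0$ is not automatic from bijectivity on $\U$ alone but rests on injectivity of $\cB$ on the ambient graph space, so one should be explicit that this is the implicit well-posedness assumption that rules out $\U$ being a strict closed subspace on which $\cB$ happens to remain bijective.
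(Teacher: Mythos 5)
Your proof is correct and follows essentially the same route as the paper's: the open mapping / bounded inverse theorem yields the norm equivalence $\|\cB w\|_{L_2}\eqsim\|w\|_\U$ in the forward direction, the converse is a direct surjectivity-plus-injectivity argument (which the paper subsumes under the Babu\v{s}ka--Ne\v{c}as theorem, equivalent here since for the pair $(\U,L_2)$ the inf-sup conditions reduce to exactly that norm equivalence), and the identification $\U=\G(\cB)_0$ rests on injectivity of $\cB$ on the ambient graph space. The subtlety you flag in that last step is real but equally present in the paper's own one-line argument (``a non-trivial orthogonal complement would contradict injectivity''), which likewise needs injectivity of $\cB$ on all of $\G(\cB)_0$ rather than just on $\U$ --- an assumption carried implicitly by the well-posedness of the underlying first-order system --- so your explicit acknowledgment of it sharpens rather than departs from the paper's reasoning.
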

\begin{proof}
 First, let $\cB :\U \to L_2$ be bijective. Then by definition of the graph norm, $\|\cB w\|_{L_2}\le \|w\|_{\G(\cB)}$, $w\in \U$.  Since $\U\subseteq \G(\cB)_0$ one has $\cB(\U)\subset L_2$. Since $\cB$ maps $\U$ onto $L_2$, boundedness of $\cB^{-1}$ follows from the 
open mapping theorem. Moreover $\U= \G(\cB)_0$, since otherwise a non-trivial orthogonal complement would contradict injectivity. The Babu\v{s}ka-Ne\v{c}as Theorem ensures then that the bilinear form
$(\cB w)(v): \U\times L_2\to \R$ yields stable variational formulation, which implies \eqref{stabvar} and
\eref{L2stabvar}.
The converse is a trivial consequence of the Babu\v{s}ka-Ne\v{c}as Theorem.
\end{proof}

 \begin{rem}
 \label{rem:PINN}
 When using the strong formulation \eref{eq:sf} under the hypotheses in Proposition \ref{rem:I}, a Monte-Carlo approximation of the
 residual in an $L_2$-space, as in   standard versions of PINN,  does not incur any variational crime.
 \end{rem}

 We proceed with a few comments on scenario \eref{eq:uwf} concerning ultra-weak formulations.
 \begin{prop}
 \label{rem:uw}
 Let $\cB^*$ denote the formal adjoint of $\cB$, i.e., for smooth functions
 $\phi$ with compact support in the computational domain $\Omega$ one has
 $\lll \cB w, \phi\rr_{L_2}= \lll w,\cB^*\phi\rr_{L_2}$. Let
 \be 
 \label{V}
 \V := {\rm clos}_{\|\cB^*\cdot\|_{L_2}}\big\{\phi\in \G(\cB^*):
 \lll \cB w, \phi\rr_{L_2}- \lll w,\cB^*\phi\rr_{L_2} =0\,\,\forall\, 
 w\in \U=\G(\cB)_0\big\}.
 \ee
 The following statements are equivalent:
 \begin{enumerate}[{\rm(i)}]
 \item
 $\cB^*: \V\to L_2$ is bijective.
 \item
 $\|v\|_\V:= \|\cB^* v\|_{L_2}$ is a norm on $\V$ and the \emph{ultra-weak} variational formulation
 \[
 b(u,v):=(\cB u)(v)=f(v), \quad \forall\,v\in \V,
 \]
 is stable with constants $c_b=C_b=1$.
 \end{enumerate}
 \end{prop}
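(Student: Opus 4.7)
The plan is to build everything around the identity
\[
b(u,v) = (\cB u)(v) = \langle u, \cB^* v\rangle_{L_2}, \qquad u \in L_2,\; v \in \V,
\]
which is precisely what the definition \eqref{V} of $\V$ was designed to enforce: on the pre-closure set, integration against any $w \in \G(\cB)_0$ produces no boundary contribution, and for fixed $u \in L_2$ the right-hand side is continuous in $v$ in the $\|\cB^*\cdot\|_{L_2}$ seminorm, so it extends uniquely to the closure $\V$. Once this identity is in hand, both implications become elementary Hilbert-space algebra.

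For (i) $\Rightarrow$ (ii), I would first deduce that $\|\cdot\|_\V$ is a genuine norm from injectivity of $\cB^*$, then read off the continuity bound $C_b \le 1$ from Cauchy--Schwarz applied to $\langle u, \cB^* v\rangle_{L_2}$. For the matching lower bound $c_b \ge 1$, given $u \in L_2 = \U$ the natural test function $v := (\cB^*)^{-1} u \in \V$, available by surjectivity of $\cB^*$, yields $b(u,v) = \|u\|_{L_2}^2$ while $\|v\|_\V = \|u\|_{L_2}$, so the test quotient equals $\|u\|_\U$ exactly. The non-degeneracy condition in \eqref{inf-sup} is then obtained by swapping roles: for a nonzero $v \in \V$, the choice $u := \cB^* v$ gives $b(u,v) = \|\cB^* v\|_{L_2}^2 > 0$.

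For (ii) $\Rightarrow$ (i), I would argue that the first clause of (ii) makes $\cB^*$ injective, and that by the very definition of $\|\cdot\|_\V$ the map $\cB^* \colon \V \to L_2$ is an isometry; since $\V$ is complete as a closure in $\|\cB^*\cdot\|_{L_2}$, the range $\cB^*(\V)$ is closed in $L_2$. Density of this range is where the inf-sup hypothesis is invoked: if $g \in L_2$ is orthogonal to $\cB^*(\V)$, then $b(g,v) = \langle g, \cB^* v\rangle_{L_2} = 0$ for every $v \in \V$, and the inf-sup bound with $c_b = 1$ forces $g = 0$, whence $\cB^*(\V) = L_2$.

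The main technical obstacle will be justifying the key identity $b(u,v) = \langle u, \cB^* v\rangle_{L_2}$ for \emph{all} $u \in L_2$ rather than merely for $u \in \G(\cB)_0$, where classical integration by parts is directly at our disposal: this will require interpreting $b(u,\cdot)$ as the unique continuous extension of $(\cB u)(\cdot)$ via density of $\G(\cB)_0$ in $L_2$ together with continuity of $\langle u, \cB^* v\rangle_{L_2}$ in $u$. Once this is granted, the rest is the Babu\v{s}ka--Ne\v{c}as theorem combined with the isometry of $\cB^*$ that is hard-wired into the definition of $\|\cdot\|_\V$.
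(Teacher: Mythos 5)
Your proof is correct and follows essentially the same route as the paper's: both rest on the identity $b(u,v)=\langle u,\cB^*v\rangle_{L_2}$ and the resulting isometry $\|v\|_\V=\|\cB^*v\|_{L_2}$, with Cauchy--Schwarz giving $C_b\le 1$ and the bijectivity of $\cB^*$ supplying the optimal test (resp.\ trial) element for the inf-sup and non-degeneracy conditions. The only cosmetic differences are that you verify the primal form of the Babu\v{s}ka--Ne\v{c}as conditions (inf-sup over $L_2$, non-degeneracy for each $v\in\V$) where the paper checks the dual form (cf.\ Remark \ref{rem:dualinfsup}), and that you spell out the converse via the closed-range-plus-density argument that the paper compresses into a citation of the Babu\v{s}ka--Ne\v{c}as Theorem.
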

\begin{proof}
 If (i) holds, $\|\cdot\|_\V$ is a norm. Obviously the bilinear form $b(\cdot,\cdot):L_2\times \V\to \R$ satisfies $|b(u,v)|= |(\cB u)(v)|= |(\cB^* v)(u)|\le \|\cB^*v\|_{L_2}\|u\|_{L_2}$, which means that $C_b\le 1$. Moreover, 
 $$
 \inf_{v\in \V}\sup_{w\in L_2}\frac{b(w,v)}{\|v\|_\V\|w\|_{L_2}}= \inf_{v\in \V}\frac{
 \|\cB^*v\|_{L_2}}{\|\cB^* v\|_{L_2}}= 1
 $$
 and for each $w\in L_2$ there exists a $v_w\in \V$ such that $b(w,v_w)=
 (\cB^* v_w)(w)\neq 0$ because $\cB^*$ is surjective. This confirms the claim. The converse follows directly from
 the Babu\v{s}ka-Ne\v{c}as Theorem.
 \end{proof}
 
 The key in both scenarios is to establish bijectivity of a linear operator 
 as a mapping to $L_2$, keeping in mind that $\cB$ and $\cB^*$ are of the same ``type'',
 so the difficulties in both cases are very similar. Moreover, as a consequence
 of Proposition \ref{rem:uw}(ii),
 $\cB$ is   an isomorphism from $L_2$ to $\V'$. Hence, its dual $\cB'$
 (by self-duality of $L_2$) is an isomorphism from $\V'$ to $L_2$ which agrees
 with $\cB^*$ on dense smooth subsets. So, roughly, the dual of $\cB^*$ agrees with
 $\cB$ as a mapping from $L_2$ to $\V'$ (which could be viewed as a continuous
 extension of the operator $\cB:\U\to L_2$, defined in scenario \eref{eq:sf}).
 
 Returning to {\em parameter dependence}, whenever Proposition \ref{rem:uw}(ii) applies,
 when employing the optimal test norm, the stability is {\em uniform} in $\pp$ 
 warranting an optimal condition of the parametric problems as well as of the lifted one
 \eref{wlB}, see Theorem \ref{thm:suff}. 
 
 In scenario \eref{eq:sf}, {\em uniform} boundedness of $\cB=\cB_\pp$ with respect $\pp\in \mD$
 is ensured by definition. If in addition
   the graph spaces $\G(\cB_\pp)$ agree with equivalent norms {\em for all} $\pp$
 we conclude that \eref{inf-sup-pp} is valid which, by Theorem \ref{thm:suff}
 implies well-posedness of the lifted problem \eref{wlB}. One 
   then still ends up with  a computable  $L_2$-least squares functional as variationally correct loss function. 
   
 If on the other hand, the graph spaces $\G(\cB_\pp)$ depend {\em essentially} on $\pp\in\mD$, by which we mean that even as sets  the $\G(\cB_\pp)$ vary with $\pp$,
and thus the membership of approximate solutions of the fiber problems to the respective 
parameter dependent trial spaces
is not robust. We see this as a reason to opt for scenario \eref{eq:uwf}, based on 
{\em ultra-weak formulations}. How to proceed in this case can be summarized as follows.
\begin{rem}
\label{rem:roadmap}
	For a wide variety classes of PDEs, one can proceed as follows to obtain suitable test spaces $\V_\pp$ for the choice $\U_\pp$: 
If \eref{lB} is not already given in this form, rewrite it as a \emph{first order system} of PDEs (introducing auxiliary variables if needed), again denoted for simplicity by $\cB u=f$. We assume homogeneous essential boundary conditions, so that $\cB$ is indeed linear. 
With the norm
\be
\label{range}
\|v\|_{\V_\pp} :=  \|\cB_\pp^* v\|_{L_2 },
\ee
define the spaces $\V_\pp$ for each parameter $\pp$ as closures with respect to this norm according to \eref{V}.
As a crucial step, one then needs to show that for each $\pp\in\mD$, the adjoint $\cB_\pp^*$ is a bijective mapping from $\V_\pp$ onto $L_2$. 

Once this has been confirmed, one defines $\X:=L_2(\mD\times \Omega)$, $\Y:= L_2(\mD;(\V_\pp)_{\pp\in\mD})$ and $b \colon \X\times \Y \to \R$ as in \eqref{eq:paramweak}, where by integration by parts, all derivatives are applied to the test function $v\in \V_\pp$. For the norm of $\Y$, we thus obtain
\be
\label{range}
\|v\|_\Y =  \|\cB^* v\|_{L_2(\mD\times\Omega) }.
\ee
Given $F\in \Y'$, finding $u\in \X:= L_2(\mD\times \Omega) $ satisfying the {\em ultra-weak} formulation
\be
\label{gen}
b(u,v)= F(v),\quad v\in \Y,
\ee
is then a well-posed problem in the sense of \eref{inf-sup}.
\end{rem}
Once Proposition \ref{rem:uw}(i) has been verified,  Proposition \ref{rem:uw}(ii) applies and says that the
fiber problems have condition number {\em equal to one}, uniformly in $\pp\in\mD$, that is,
\be
\label{equal}
\|u - w\|_\X = \|\cF(w)\|_{\Y'}.
\ee
We therefore call the right hand side an ``ideal residual loss'' because the respective
norms of errors and residuals agree and the variational problem has been
{\em optimally preconditioned} already on the continuous level. 
Note also that in such ultra-weak formulations of the form \eref{gen} essential boundary conditions
can no longer be imposed on elements in $L_2$. In fact, essential boundary conditions
become
natural ones.

On the other hand, one still faces   the problem that
the resulting ideal residual loss function $\|\cB u-F\|_{\Y'}^2$ involves a nontrivial dual norm. 

What we have gained though is that the ultra-weak formulation \eref{eq:uwf} is the perhaps most convenient
starting point for deriving stable {\em discontinuous Petrov-Galerkin} (DPG) formulations.
For the convenience of the reader we briefly recall in   Section \ref{ssec:dpg}  the  relevant DPG concepts. 
In combination with the hybrid representation format as in Section \ref{ssec:hybrid}, which is discussed further in Section \ref{ssec:DNNhybrids}, DPG formulations will be shown below to  lead then to computable variationally correct  loss functions.
 
Stable DPG formulations have by now been established for a wide range PDE models including Maxwell's equations, dispersive models, or singularly perturbed problems, primarily in the context
of DPG methods based on well-posed variational formulations of the above type; see, for example, \cite{BDS,CDG,CDW,DGM,DHSW,DGNS,GopSep}.
 We proceed discussing examples for both strong and ultra-weak formulations.

\subsection{Strong formulation: First order system least-squares methods in $L_2$}
We consider two different model problems where $\U$ can be chosen as a parameter-independent graph space as in \eqref{eq:sf}.

\subsubsection{An elliptic model problem} \label{ssec:3.2}%
\newcommand{\divv}{{\rm div}}
\newcommand{\wz}{\mathfrak{z}}\newcommand{\bu}{\mathbf{u}}
Although one could include in what follows lower order terms we focus for simplicity
of exposition on the classical Poisson equation (without reaction or convection term)
with homogeneous Dirichlet conditions
\be
\label{Poisson}
- {\rm div}(a(\pp)\nabla u)=f \quad \mbox{on $\Omega$ and $u|_{\partial\Omega}=0$},
\ee
where we consider parameter-dependent diffusion coefficients satisfying the {\em uniform
ellipticity conditions} (UE):
\be
\label{UE}
\exists\,r,R,\quad  0<r\le R<\infty \quad \mbox{such that }\,\, r\le a(x,\pp)\le R,\quad (x,\pp)\in \Omega\times \mD.
\ee
We   suppress in what follows first the dependence on the parameter $\pp$ where it does not matter.
We say that
$u\in H^1_0(\Omega)$ is a solution of \eref{Poisson} if 
\be
\label{standard}
\langle a\nabla u,\nabla v\rangle_{L_2(\Omega)} = f(v),\quad v\in H^1_0(\Omega),
\ee
which is known to be stably solvable for any $f\in H^{-1}(\Omega):= (H^1_0(\Omega))'$.
 Next recall that 
$$
H({\rm div},\Omega):= \{\bw\in L_2(\Omega;\R^d): {\rm div}\,\bw\in L_2(\Omega) \},\quad 
\| \bw\|^2_{H({\rm div},\Omega)}:= \|\bw\|^2_{L_2(\Omega;\R^d)}+ \|{\rm div}\,\bw\|^2_{L_2(\Omega)}.
$$
Also
recall that every $f\in H^{-1}(\Omega)$ can be written as  $f = f_2+ \operatorname{div}\bbf_1$, for some $f_2\in L_2(\Omega)$, $\bbf_1\in L_2(\Omega;\R^d)$.
Hence, if $ u\in H^1_0(\Omega)$ is
the solution of \eref{standard} for this $f$, one has
\be
\label{f1f2}
 \boldsymbol{\sigma} := a\nabla  u+\bbf_1\in L_2(\Omega) \quad \mbox{solves}\quad -\operatorname{div}\boldsymbol{\sigma} = f_2.
\ee
In other words, defining 
\be
\label{mixed} 
\cB_\pp: \ug = [\boldsymbol{\sigma},u] \mapsto 
\cB_\pp \ug = \left(\begin{matrix}
 \id & -a(\pp)\nabla\\
-\divv & 0
\end{matrix} 
\right)\left(\begin{matrix}
	\boldsymbol{\sigma}\\
u
\end{matrix}
\right)= \left(\begin{matrix}   \boldsymbol{\sigma} - a(\pp)\nabla u\\
-\operatorname{div}\boldsymbol{\sigma}
\end{matrix}\right),%
\ee
$\ug:=[\nabla  u, u]$ solves $\cB_\pp \ug = \mathfrak{f} = (\bbf_1,f_2)$. Note that $\nabla u\in
H(\divv;\Omega)$ 
 and $ u\in H^1_0(\Omega)$. Thus 
\be
\label{UPoisson}
\U := H(\divv;\Omega)\times H^1_0(\Omega)
\ee
is a Hilbert space contained in $\G(\cB)_0$, and the previous observation
shows   that $\cB_\pp:\U\to L_2$ 
is surjective.
As shown in \cite{FOSL}, the symmetric bilinear form
\be
\label{BigB}
B(\ug,\wg;\pp):= \langle\cB_\pp\ug,\cB_\pp \wg\rangle_{L_2(\Omega)} : \U\times \U\to \R
\ee
is $\U$-elliptic, that is, 
\be
\label{Uequiv}
\|\wg\|^2_{\G(\cB)_0}=B(\wg,\wg;\pp)\eqsim \|\wg\|^2_\U,\quad \wg\in \U, \quad \mbox{uniformly in $\pp$ obeying \eref{UE}}.
\ee
This implies, in particular,   injectivity and hence bijectivity of of $\cB_\pp:\U\to L_2.
$ 

Hence Proposition \ref{rem:I} applies and says that $\U= \G(\cB)_0$ and $\cB=\cB_\pp:
\G(\cB)_0  \to L_2$,
defined by \eref{mixed},  is bounded and boundedly invertible. This, in turn says that
\be
\label{opt}
\ug(\pp) =\argmin_{\wg\in \U}\|\cB_\pp \wg- \mathfrak{f}\|^2_{L_2}
\ee
is the unique solution of $\cB_\pp \ug(\pp)= \mathfrak{f}$ and, on account of \eref{Uequiv},
\[
	\|\cB_\pp \wg - \mathfrak{f}\|_{L_2}\eqsim \|\ug-\wg\|_\U.	
\]
holds uniformly in $\pp$.  

The formulation (\ref{opt}) can be regarded as a particular instance of the framework of Section 
\ref{sec:residual}, with $\U_\pp=\U$ as in \eqref{UPoisson}, $\V_\pp=\V = L_2$, and the $\pp$-dependent bilinear form
\be\label{eq:elliptparamform}
b(\ug,\mathfrak{v};\pp):= (\cB_\pp \ug)(\mathfrak{v}) = \langle \boldsymbol{\sigma} - a(\pp)\nabla u,\boldsymbol{\tau}\rangle_{L_2(\Omega)}
-\langle\operatorname{div}\boldsymbol{\sigma},v\rangle_{L_2(\Omega)}, \quad \ug \in \U, \, \mathfrak{v} = [\boldsymbol{\tau},v] \in L_2.
\ee
In fact, $\ug(\pp)$ is characterized by the normal equations
$B(\ug,\wg;\pp)= \langle\mathfrak{f},\cB_\pp\wg\rangle_{L_2(\Omega)}$, $\wg\in \U$. Again, since $\cB_p:\U\to L_2$
is an isomorphism, this is equivalent to 
$$
b(\ug,\mathfrak{v};\pp)= B(\ug,\cB_\pp^{-1}\mathfrak{v};\pp)= 
\langle\mathfrak{f},\mathfrak{v}\rangle_{L_2(\Omega)} = 
\mathfrak{f}(\mathfrak{v}),
$$
posed over $\U\times L_2$.
\begin{rem}
\label{rem:noness}
By \eref{Uequiv} and \eref{UE}, the above well-posedness is uniform in $\pp$
so that the above trial and test spaces $\U=H(\divv;\Omega)\times H^1_0(\Omega)$ and $\V=L_2$ are indeed independent of $\pp$.
 Thus, Theorem \ref{thm:suff} applies (see Remark \ref{rem:justified}) and we can
readily obtain from \eref{eq:elliptparamform}  a weak formulation with bilinear form $b(\cdot,\cdot): \X\times \Y \to \R$ with 
$\X= L_2(\mD;\U)$, $\Y'= L_2(\mD;L_2) = L_2(\mD)\times L_2$, where $L_2= L_2(\Omega;\R^d)\times L_2(\Omega)$.
\end{rem}

\begin{rem}
\label{rem:equiv}
We emphasize that the formulations \eqref{standard} and \eqref{opt} are equivalent in the following sense:
As noted above, if $u$ solves \eref{standard} then $\ug =[\nabla u,u]$ solves $\cB_\pp\ug = \mathfrak{f}$.
Conversely, when $\ug=[\boldsymbol{\sigma},u]$ solves $\cB_\pp\ug=\mathfrak{f}$  then $u$ solves 
\eref{standard} with $\boldsymbol{\sigma} =  \nabla u$.
\end{rem}

\subsubsection{A parabolic model problem} \label{ssec:3.3}
We close the discussion of scenario \eqref{eq:sf} by applying 
  the above principles apply   to a time dependent
problem, leading to space-time
variational formulations.
We recall the following standard well-posed space-time variational formulation for 
parabolic initial boundary value problems: for $I=(0,T)$ find 
$$
u\in L_2(I;H^1_0(\Omega))\cap H^1(I;H^{-1}(\Omega))
$$
such that
\be
\label{parab}
\int_{I\times\Omega} \partial_t u v + a\nabla_x u\cdot\nabla_xv\,dt\,dx= \int_{I\times\Omega}fv\,dt\,dx,\quad \forall\, v\in  L_2(I;H^1_0(\Omega)),
\ee
where $f\in L_2(I;H^{-1}(\Omega))$ and $a$ may depend also on parameters $\pp$
such that \eref{UE} holds. It is well-known that this (asymmetric) variational formulation is stable, which remains true when the roles of trial and test space are
interchanged, with a terminal condition at $T$ on the test space, see e.g. \cite{Stev}.

Here both trial and test space involve non-trivial dual norms which 
motivates interest in computationally more friendly formulations. 
A formulation similar to \eqref{mixed} for the elliptic case has been introduced in \cite{FK}.
Writing $f\in L_2(I;H^{-1}(\Omega))$ again as $f=f_2+ \divv_x\,\bbf_1$, 
with $\bbf_1\in L_2(I\times \Omega;\R^d)$, consider the first-order system
\be
\label{Bparab}
\cB\ug := \left(\begin{matrix}
-\boldsymbol{\sigma}- a\nabla_x u\\
\partial_t u+\divv_x\boldsymbol{\sigma}\\
u(0,\cdot)
\end{matrix}\right)= 
\left(\begin{matrix}
\bbf_1\\
f_2\\
u_0
\end{matrix}\right),\quad u_2|_{I\times \partial\Omega}=0.
\ee
In this case, as shown in \cite{GS}, the graph space is isomorphic to
\be
\label{pargraph}
\U =  \big(L_2(I\times\Omega;\R^d)\times L_2(I;H^1_0(\Omega))  \big)\cap H(\divv;I\times\Omega),
\ee
where $H(\divv;I\times\Omega)$ is defined in terms of the space-time divergence $\operatorname{div} [\boldsymbol{\sigma},u] = \operatorname{div}_x \boldsymbol{\sigma} + \partial_t u$.
In particular, no explicit treatment of dual spaces is required in this formulation.
As shown in \cite{GS}, the mapping $\cB$ is boundedly invertible from 
$\U$ to the $L_2$-space
\[
\V = L_2(I\times\Omega;\R^d) \times L_2(I\times \Omega)\times L_2(\Omega)
\]
so that one is in the same situation as in the elliptic case, that is, for any 
$F= (\bbf_1,f_2,u_0)\in \V$ there exists a unique $\ug\in \U$ such that
$\cB\ug = F$ and
$$
\ug= \argmin_{\wg\in\U}\|\cB\wg-F\|^2_{\V},\quad \|\ug-\wg\|_\U
\eqsim \|F- \cB\wg\|_{\V},\,\, \wg\in \U.
$$
Analogous remarks on the elliptic case   in the previous section apply to the current situation as well.

\subsection{ Neural DPG methods for ultra-weak formulations \eref{eq:uwf}}\label{ssec:DPG}

\subsubsection{Linear transport equations}\label{sssec:transport}
 Let $\Omega\subset\R^{d_x}$
be a domain and consider 
\be
\label{transport}
\cB_\pp u(x,\pp):=\bb(x,\pp)\nabla u(x) + c(x,\pp)u(x) = f(x,\pp)  \quad \mbox{in}\,\,\Omega,
\quad u|_{\Gamma_{\rm in}(\pp)}=g,
\ee
where $\Gamma_{\rm in}(\pp)=\operatorname{clos} \{x\in \partial\Omega: \bn(x)\cdot\bb(x,\pp)< 0\}$ denotes
the \emph{inflow-boundary} of $\Omega$. Here we assume that $\partial\Omega$ is smooth
enough so that the outer normal $\bn(x)$ exists a.e. on $\partial\Omega$. Analogously, we define the  {\em outflow boundary} 
\[  \Gamma_{\rm out}(\pp)= \operatorname{clos}{ \{x\in \partial\Omega: \bn(x)\cdot\bb(x,\pp)> 0\} }, \]
and furthermore the {\em characteristic boundary} $\Gamma_{0}(\pp)= \partial\Omega\setminus ( \Gamma_{\rm in}(\pp) \cup \Gamma_{\rm out}(\pp) )$.
 Note, that the formulation \eref{transport}
includes the case of instationary problems, where one simply views the first component of $x$ as 
the time variable and takes the first component $\bb_1(x,\pp)$ to be equal to one. 
We postpone specifying conditions on the convection field and first put this example into context.

Although the concepts used in what follows apply in much larger generality,
we discuss this specific example for several reasons. First it is of interest, for instance, as a core constituent
of more involved kinetic models. Second, despite  its ``analytical simplicity'', it exhibits
several difficulties when dealing with parameter dependent convection fields.
In fact, when $\Omega$ is a polyhedral domain, the inflow-boundary may flip
abruptly when the convection direction passes through specific parameter instances. This entails an intrinsic discontinuity
with respect to the parameter dependence. 

To that end, since $\cB_\pp$ has already order one, one could adopt scenario \eref{eq:sf},   and look for solutions in the graph space of $\cB_\pp$ comprised of those functions $w$ in $L_2(\Omega)$ for wich $\cB_\pp w$ is also in $L_2(\Omega)$. 
For a function $w$ to belong to the graph space requires directional derivatives 
of $w$ along characteristics to belong to $L_2(\Omega)$ which allows for discontinuities along the characteristics. Thus, even under small perturbations of the convection field the 
graph space changes not only with respect to the norm but already as a set. 
Therefore, we opt for ultra-weak formulations according to scenario \eref{eq:uwf}    
and seek solutions just in $L_2(\Omega)$ which
is indifferent under varying convection.

As shown below, the fiber test spaces then depend   on $\pp$ in an essential way, 
which is more tolerant to
perturbations, see \cite{BDS}.
Thus, ``lifting'' the family of fiber problems \eref{transport}
to a single variational problem over a pair of classical Bochner spaces would not work
and one needs to resort to the type of direct integrals defined in
 \eref{XY}, in particular, to arrive at well-defined dual norms underlying an ideal
 loss function.

To exhibit appropriate weak formulations of the fiber problems \eref{transport}
we briefly sketch the relevant arguments from \cite{BDS,DHSW}. 

We consider first a fixed parameter $\pp$
and recall for convenience some relevant facts from \cite{DHSW,BDS}. We follow Proposition \ref{rem:uw} to determine $\cB_\pp^*$ and the test space $\V_\pp$. 
To that end,  integration by parts yields
\begin{align}
\label{Green}
0 &= \int_\Omega -u(x)\big[\Div(\bb(x,\pp) v)(x) - c(x,\pp)v(x)\big]\,dx
\nonumber\\
&\qquad\qquad\qquad\qquad
- \int_{\partial\Omega}\bn(x)\cdot \bb(x,\pp) u(x)v(x)\,d\Gamma(x) -\int_{\Omega}f(x)v(x)\,dx.
\end{align}
Notice that when $u\in L_2(\Omega)$ is all we know, traces are not defined. But when inflow-boundary conditions
$g$ are imposed, \eref{Green} can be rewritten as
\begin{align}
\label{uwtrans}
0 &= - \int_\Omega u(x) \big[\Div(\bb(x,\pp) v)(x) - c(x,\pp)v(x)\big]\,dx\nonumber
\\ 
&\qquad\qquad\qquad\qquad+ \int_{\Gamma_{\rm in}(\pp)}|\bn\cdot\bb(\cdot,\pp)|g(\cdot,\pp)v(\cdot)\,d\Gamma(x)
- \int_{\Omega}f(x)v(x)\,dx\nonumber\\
&=: \ell(v;\pp)-  b(u,v;\pp), 
\end{align}
where 
\be
\label{ell}
\ell(v,\pp):= \int_{\Gamma_{\rm in}(\pp)}|\bn\cdot\bb(\cdot,\pp)|g(\cdot,\pp)v(\cdot)\,d\Gamma(x)
- f(v), %
\ee
provided that the test function $v$ vanishes on the outflow boundary $\Gamma_{\rm out}(\pp)$. Defining further
\be
\label{Vpp}
\V = \V_\pp := {\rm clos}_{\|\cdot\|_{\V_\pp}}\Big\{v\in C^1(\overline\Omega): v|_{\Gamma_{\rm out}(\pp)}=0\Big\},
\ee
where
\be
\label{Vnorm}
 \|v\|_{\V_\pp}:= \norm{\Div(\bb(\cdot,\pp) v) - c(\cdot,\pp)v}_{L_2(\Omega)},
\ee
follows exactly the receipe \eref{V} in Proposition \ref{rem:uw} with the norm 
given in of Proposition \ref{rem:uw}(ii).

For the linear functional $\ell$ to belong to $\V_\pp$ we need that $g, v|_{\Gamma_{\rm in}(\pp)}$ belong  the weighted $L_2$ space
$L_2\big(|\bb(\cdot,\pp)\cdot\bn(\cdot)|,\Gamma_{\rm in}(\pp)\big)$.
It is well-known that the elements of $\V_\pp$ indeed have a trace in this space
(see the discussion in \cite{DHSW}).
Note, that essential inflow boundary conditions in the strong formulation have become natural ones appearing as part of the right hand side.

Moreover, it has also been shown in \cite{BDS} that $\cB^*_\pp = \Div(\bb(\cdot,\pp) v) - c(\cdot,\pp)v$
is indeed an isomorphism as a mapping from $\V_\pp$ onto $L_2(\Omega)$ under mild assumptions on the convection field $\bb(\cdot,\pp)$ which we assume to be valid in what follows. 

It is difficult to characterize well-posedness of \eref{uwtrans} in terms of concise
properties of the convection field and one usually has to be content with sufficient conditions, see e.g. related discussions in \cite{BDS,DHSW}. 
Throughout the remainder of this section we work under the following assumption
that guarantee, in particular, well-posedness of \eref{uwtrans} for each $\pp\in\mD$.

\begin{assumption}\label{ass:vectorfield}
	We assume for almost all $\pp\in\mD$ that $\mathbf b(\cdot;\pp)$ is Lipschitz on $\bar\Omega$ and $|\mathbf b(\cdot;\pp)|\ge \beta>0$ for some fixed $\beta$, that is,
	the convection is not allowed to degenerate. Moreover, we either require that for
	each $\pp\in \mD$
	that $\bb(\cdot;\pp)$ is in $C^1(\Omega;\R^d)$ or there exists a positive constant $\kappa$ such that $c -\frac 12 {\rm div}\,\bb(\cdot;\pp)\ge \kappa$.
	Finally, we assume that $\mathbf b$ is jointly measurable in $x$ and $\pp$.
\end{assumption}
These conditions ensure, in particular, that for $v\in \V_\pp$ one has $\|v\|\le C \|\cB^*_\pp v\|$ which means that $\cB^*_\pp$ is a closed operator. Since the structure of
$\cB_\pp$ is the same, $\cB_\pp$ is closed for each $\pp\in\mD$ as well.
The main consequences can be summarized as follows.
\begin{rem}
\label{rem:pstab}
Employing the test-norms \eref{Vnorm}, the family of fiber problems are well-posed
for the pair  $\U_\pp =L_2(\Omega)$, $\V_p$, given by \eref{Vpp}, for each $\pp\in\mD$, i.e.,  
\eref{inf-sup-pp} is valid with $c_b=C_b=1$, and thus uniformly in $\pp$.
\end{rem}

\begin{rem}
\label{rem:bsat}
Under Assumption \ref{ass:vectorfield} on $\bb(\cdot,\pp)$, Property \ref{ass:bil} is satisfied. %
\end{rem}

 To be able to
formulate  the corresponding lifted problem and to invoke Theorem \ref{thm:suff} 
(and take advantage of the tight error-residual relation \eref{equal}), the following
claim is crucial.
\begin{theorem}
\label{thm:dint}
Under Assumption \ref{ass:vectorfield}, the direct integral $\Y=L_2(\mD;(\V_\pp)_{\pp\in\mD})$ is a well-defined Hilbert space with dual $\Y' = L_2(\mD;(\V'_\pp)_{\pp\in\mD})$.
\end{theorem}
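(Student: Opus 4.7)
The plan is to apply Theorem~\ref{thm:mes} to the family $(\V_\pp)_{\pp\in\mD}$ and then Theorem~\ref{lem:dualHilbertIntegral} for the dual. The only substantive task is to verify Property~\ref{ass:HilbertIntegral}, that is, to exhibit a fundamental sequence of $\mu$-measurable sections in $\prod_{\pp\in\mD}\V_\pp$. Once such a sequence is in place, Theorem~\ref{thm:mes} produces the Hilbert-space structure on $\Y$, and a direct application of Theorem~\ref{lem:dualHilbertIntegral} to this direct integral yields $\Y'=L_2(\mD;(\V_\pp')_{\pp\in\mD})$ together with the pointwise Riesz identity \eqref{exchange}.

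The naive density route via Proposition~\ref{prop:cap} looks unattractive, since $\bigcap_{\pp\in\mD}\V_\pp$ forces vanishing on the union $\bigcup_{\pp\in\mD}\Gamma_{\rm out}(\pp)$, which for nontrivial parametric dependence may cover essentially all of $\partial\Omega$ and is in general not dense in any individual $\V_\pp$ (already in one spatial dimension, a function vanishing only at the outflow endpoint cannot be approximated in the $\V_\pp$-norm by functions vanishing at both endpoints). Instead, I would exploit the isometric isomorphism $\cB_\pp^*\colon\V_\pp\to L_2(\Omega)$ that is guaranteed by Assumption~\ref{ass:vectorfield} together with Proposition~\ref{rem:uw} and Remark~\ref{rem:pstab}. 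By the very definition \eqref{Vnorm} of the $\V_\pp$-norm, this map is isometric, so polarization yields $\langle u,v\rangle_{\V_\pp}=\langle \cB_\pp^* u,\cB_\pp^* v\rangle_{L_2(\Omega)}$ for all $u,v\in\V_\pp$.

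Fix any countable dense subset $(\phi_n)_{n\in\N}$ of $L_2(\Omega)$ and set $\psi_n(\pp):=(\cB_\pp^*)^{-1}\phi_n\in\V_\pp$, giving a well-defined element of $\prod_{\pp\in\mD}\V_\pp$. Density of the linear span of $\{\psi_n(\pp)\}_{n\in\N}$ in each $\V_\pp$ is immediate from $(\cB_\pp^*)^{-1}$ being a bijective isometry. The measurability requirement becomes trivial in this setup:
\[
\langle\psi_n(\pp),\psi_m(\pp)\rangle_{\V_\pp}
=\langle \cB_\pp^*\psi_n(\pp),\cB_\pp^*\psi_m(\pp)\rangle_{L_2(\Omega)}
=\langle\phi_n,\phi_m\rangle_{L_2(\Omega)},
\]
which is a constant function of $\pp$ and hence $\mu$-measurable. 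This verifies Property~\ref{ass:HilbertIntegral}, after which the two applications of Theorem~\ref{thm:mes} and Theorem~\ref{lem:dualHilbertIntegral} finish the proof. The one potential obstacle I anticipated — constructing $\pp$-measurable sections that simultaneously respect outflow boundary conditions that can jump discontinuously with $\pp$ — is bypassed by transporting the entire construction back to the fixed Hilbert space $L_2(\Omega)$ via the adjoint isomorphism; the parametric dependence of the boundary conditions is absorbed into the inverse operator $(\cB_\pp^*)^{-1}$, and it never needs to be tracked explicitly at the level of the sections.
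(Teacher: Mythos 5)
Your verification of Definition \ref{def:meassect} is formally airtight but proves the wrong thing, and the tell is that Assumption \ref{ass:vectorfield} never enters your argument. The same trick works for \emph{any} family of separable Hilbert spaces with no hypotheses at all: pick an orthonormal basis $(e_n(\pp))_{n\in\N}$ of each $\W_\pp$, note that $\langle e_n(\pp),e_m(\pp)\rangle_{\W_\pp}=\delta_{nm}$ is constant in $\pp$, and you have a ``fundamental sequence of $\mu$-measurable sections.'' Since the direct integral genuinely depends on the choice of fundamental sequence --- different choices produce different spaces, which is exactly the pathology exhibited in Example \ref{ex:B} --- a construction this cheap cannot be carrying the content of the theorem. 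What the theorem is actually asserting (and what the paper proves via Lemma \ref{lem:V_pDirectIntegral}) is that the \emph{natural} direct integral structure exists: the one whose measurable sections are jointly measurable functions on $\mD\times\Omega$, so that $\Y$ is the closure of $\{v:\ v(\pp,\cdot)\in C^1(\Omega),\ v(\pp,\cdot)|_{\Gamma_{\rm out}(\pp)}=0\}$ and embeds continuously in $L_2(\mD\times\Omega)$. This compatibility is indispensable downstream: the lifted form \eqref{eq:paramweak}, the boundary functional \eqref{ell} (which needs traces of test sections on the $\pp$-dependent inflow boundary), and the DPG losses all require elements of $\Y$ to be concrete measurable functions, not abstract points in an isometrically transported copy of $L_2(\mD;L_2(\Omega))$.

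Concretely, the gap is that you never show $(\pp,x)\mapsto\psi_n(\pp)(x)=\bigl((\cB_\pp^*)^{-1}\phi_n\bigr)(x)$ is jointly measurable, and establishing that would require precisely the analysis of the characteristic flow that you claim to bypass: $(\cB_\pp^*)^{-1}$ is an integration along characteristics whose geometry (in particular $\Gamma_{\rm out}(\pp)$) can jump discontinuously in $\pp$. The paper's proof confronts this head-on. It takes a $\pp$-\emph{independent} total system $(\zeta_n)$ in $H^1(\Omega)$ (dense in every graph space $\G(\cB_\pp^*)$ by Lemma \ref{lem:H1}) and multiplies by an explicit clipping function $\phi(\cdot;\pp)$ solving $-\bb(\cdot;\pp)\cdot\nabla\phi=1$, $\phi|_{\Gamma_{\rm out}(\pp)}=0$, realized as the travel time along characteristics; the Lipschitz, non-degeneracy, and joint measurability requirements of Assumption \ref{ass:vectorfield} are exactly what make $\phi$ jointly measurable, bounded with bounded directional derivative, and what make $\xi_n=\phi(\cdot;\pp)\zeta_n$ a fundamental system of jointly measurable sections that is total in each $\V_\pp$. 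Your closing sentence --- that the parametric dependence ``never needs to be tracked explicitly at the level of the sections'' --- names precisely the step that constitutes the proof.
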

With this result at hand, the concepts presented in Section \ref{sec:residual}
can be applied.

In view of Theorem \ref{thm:mes}, the claim in Theorem \ref{thm:dint} is an immediate consequence
of  the following Lemma.

\begin{lemma}
\label{lem:V_pDirectIntegral}
There is a fundamental $\mu$-measurable sequence $\prod_{\pp\in\mD}\V_\pp$ such that the corresponding direct integral $\Y=L_2(\mD,(\V_\pp)_{\pp\in\mD})$ satisfies 
 	\[  L_2(\mD,(\V_\pp)_{\pp\in\mD})=\mathrm{ clos}_{\|\cdot\|_{\Y}}\{ v\in L^2 (\mD\times \Omega )\colon v(\pp,\cdot)\in C^1(\Omega),  v(\pp,\cdot)|_{\Gamma_{\rm out}(\pp)}=0 \text{ for a.e.\ $\pp\in\mD$} \}  \]
 	with 
 	 \[  \|v\|_\Y^2 = \int_\mD \|  v(\pp,\cdot)\|^2_{\V_\pp}\,d\mu(\pp). \]

\end{lemma}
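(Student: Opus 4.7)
The plan is to exhibit a fundamental $\mu$-measurable sequence of sections in $\prod_{\pp\in\mD}\V_\pp$; Theorem~\ref{thm:mes} will then immediately supply the direct-integral structure, and the identification of $\Y$ with the announced closure will follow from the density half of the construction. The sections will be built as products of a fixed countable dense family of $C^1(\bar\Omega)$-functions with a carefully chosen, $\pp$-dependent smooth cutoff that vanishes near $\Gamma_{\mathrm{out}}(\pp)$.

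Concretely, I first fix a countable family $\{\phi_n\}_{n\in\N}\subset C^1(\bar\Omega)$ that is dense in the $C^1$-norm (for instance, polynomials with rational coefficients). For each $k\in\N$ and $\pp\in\mD$, I construct a cutoff $\eta_\pp^{(k)}\in C^1(\bar\Omega,[0,1])$ with $\eta_\pp^{(k)}\equiv 0$ in a $1/k$-neighborhood of $\Gamma_{\mathrm{out}}(\pp)$ and $\eta_\pp^{(k)}(x)\to 1$ as $k\to\infty$ for every $x\in\bar\Omega\setminus\Gamma_{\mathrm{out}}(\pp)$. Under Assumption~\ref{ass:vectorfield}, $\bn\cdot\bb$ is jointly measurable in $(x,\pp)$; extending it from $\partial\Omega$ to a one-sided tubular collar via nearest-point projection and mollifying in $x$ yields $\eta_\pp^{(k)}$ that is smooth in $x$ and jointly measurable in $(x,\pp)$. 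The sections $\psi_{n,k}(\pp):=\phi_n\,\eta_\pp^{(k)}$ then lie in $\V_\pp$ for every $\pp$, and the integrand defining $\langle\psi_{n,k}(\pp),\psi_{m,\ell}(\pp)\rangle_{\V_\pp}$ is jointly measurable in $(x,\pp)$, so Tonelli gives measurability of $\pp\mapsto\langle\psi_{n,k}(\pp),\psi_{m,\ell}(\pp)\rangle_{\V_\pp}$.

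For density in each fixed $\V_\pp$, let $v\in\V_\pp$ and, by the definition of $\V_\pp$, pick $v_j\in C^1(\bar\Omega)$ vanishing on $\Gamma_{\mathrm{out}}(\pp)$ with $v_j\to v$ in $\V_\pp$. For fixed $j$, I claim $v_j\eta_\pp^{(k)}\to v_j$ in $\|\cdot\|_{\V_\pp}$ as $k\to\infty$: the only delicate contribution is $v_j\,\bb\cdot\nabla\eta_\pp^{(k)}$, where $|\nabla\eta_\pp^{(k)}|\lesssim k$ in the transition layer, but since $v_j$ vanishes on $\Gamma_{\mathrm{out}}(\pp)$ and is $C^1$, one has $|v_j(x)|\lesssim \mathrm{dist}(x,\Gamma_{\mathrm{out}}(\pp))\lesssim 1/k$ in that layer, which itself has measure $O(1/k)$; the resulting $L_2$-contribution is thus $O(k^{-1/2})$. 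The remaining terms are uniformly bounded and supported on a set of vanishing measure, so they tend to $0$ in $L_2$ by dominated convergence. With $k$ fixed, approximating $v_j$ by an element of $\mathrm{span}\{\phi_n\}$ in $C^1(\bar\Omega)$ completes the argument, because $\|w\|_{\V_\pp}\lesssim\|w\|_{C^1(\bar\Omega)}$ uniformly in $\pp$ under Assumption~\ref{ass:vectorfield}. This establishes the fundamental sequence property; Theorem~\ref{thm:mes} then makes $\Y$ a Hilbert space, and the announced identification is exactly the statement that $\mathrm{span}\{\psi_{n,k}\}$ is dense in $\Y$.

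The main obstacle is the interplay between $\mu$-measurability and the $\pp$-dependent essential boundary condition on $\Gamma_{\mathrm{out}}(\pp)$: on polyhedral domains the outflow boundary can jump abruptly as $\pp$ varies, so a naive geometric construction of $\eta_\pp^{(k)}$ directly from $\Gamma_{\mathrm{out}}(\pp)$ need not be jointly measurable. Routing the construction through the mollification of the jointly measurable field $\bn\cdot\bb$ is what rescues measurability, while the boundary-layer vanishing rate of $v_j$ is what keeps the $\V_\pp$-norm convergence alive as the transition region collapses.
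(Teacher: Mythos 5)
Your strategy is genuinely different from the paper's: you multiply a $\pp$-independent family that is dense in $C^1(\bar\Omega)$ by geometric cutoffs $\eta^{(k)}_\pp$ supported away from $\Gamma_{\rm out}(\pp)$ and then control the resulting boundary layer, whereas the paper multiplies a fundamental system of $H^1(\Omega)$ (which is dense in each graph space $\G(\cB^*_\pp)$ by its Lemma on $H^1$-density) by a single transport-adapted weight $\phi(\cdot;\pp)$, the travel time to the outflow boundary along characteristics. That weight satisfies $D_\pp\phi=-1$ and is bounded, so multiplication by it is bounded in the graph norm and the density argument reduces to showing $\phi(\cdot;\pp)^{-1}v\in\G(\cB_\pp^*)$; no boundary-layer estimate is needed, and measurability in $\pp$ is inherited from the flow map of $\dot z=-\bb(z;\pp)$. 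Your fixed-$\pp$ density computation is essentially sound: the $O(k^{-1/2})$ bound on $v_j\,\bb\cdot\nabla\eta^{(k)}_\pp$ via $|v_j|\lesssim \mathrm{dist}(\cdot,\Gamma_{\rm out}(\pp))$ in a collar of measure $O(1/k)$, the dominated-convergence treatment of the remaining terms, and the final $C^1$-approximation step all work, granted a cutoff with the stated properties.

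The gap sits exactly where you locate the main obstacle, and your proposed rescue does not close it. Extending $\bn\cdot\bb$ to a collar and mollifying cannot produce a function that vanishes on a neighborhood of $\Gamma_{\rm out}(\pp)$: that set is the \emph{closure} of $\{x\in\partial\Omega:\bn\cdot\bb>0\}$ and contains points near the characteristic boundary $\Gamma_0(\pp)$ where $\bn\cdot\bb$ is positive but arbitrarily small or vanishes, so no thresholding of a mollified $\bn\cdot\bb$ detects them. Consequently the sections $\phi_n\eta^{(k)}_\pp$ built this way need not vanish on all of $\Gamma_{\rm out}(\pp)$ and hence need not lie in $\V_\pp$ at all. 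If you instead take $\eta^{(k)}_\pp=\chi\bigl(k\,\mathrm{dist}(\cdot,\Gamma_{\rm out}(\pp))\bigr)$, you recover the support and gradient properties your density argument uses, but then the joint measurability of $(x,\pp)\mapsto\mathrm{dist}(x,\Gamma_{\rm out}(\pp))$ --- precisely what the mollification was meant to supply --- is asserted rather than proved; establishing it requires, e.g., reducing the infimum over $\Gamma_{\rm out}(\pp)$ to a countable dense subset of $\partial\Omega$, which needs an openness argument for $\{y:\bn(y)\cdot\bb(y,\pp)>0\}$ that is delicate on a Lipschitz boundary where $\bn$ is only defined almost everywhere. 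Until a cutoff family with both the support property and joint measurability is actually exhibited, the fundamental-sequence property is not established; the paper's characteristic travel-time weight is designed specifically to dissolve this dilemma.
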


We defer the proof of this lemma to Appendix \ref{appendix:A}.
 Here we briefly comment on the
main steps. A first auxiliary ingredient is the notion of graph space which we briefly
recall, abbreviating at times
  $\|\cdot\|:= \|\cdot\|_{L_2(\Omega)}$, when the reference to the domain is clear
$$
\G^*_\pp:= \G(\cB^*_\pp) = \{ v\in L_2(\Omega): \|v\|^2+ \|\cB^*_\pp v\|^2:= \|v\|^2_{\G(\cB^*_\pp)}<\infty\}. 
$$
By our previous comments, $\cB^*_\pp$ is closed and hence $\G^*_\pp$ is a Banach space.

An important observation is that the Sobolev space $H^1(\Omega)$ is dense in 
every $\G^*_\pp$ for each $\pp\in \mD$. A fundamental system for $\prod_{\pp\in\mD}\V_\pp$
can then be constructed by first taking (any) fundamental system in $H^1(\Omega)$
and multiplying each element in this system by a judiciously chosen $\pp$-dependent {\em clipping function} whose properties allow one to show that the resulting system is fundamental in $\prod_{\pp\in\mD}\V_\pp$.

\subsubsection{Discontinuous Petrov-Galerkin discretization}\label{ssec:dpg}

For the convenience of the reader we briefly recall features of discontinuous Petrov-Galerkin (DPG) methodology that matter for the present purposes.
 (DPG) methods are relevant when well-posedness necessitates 
$\U\neq \V\neq L_2$. It is derived from a {\em well-posed} variational formulation 
\be
\label{original}
\mbox{find $u\in\U$ such that:}\qquad b(u,v)=f(v),\quad v\in \V.
\ee
For the moment, we
 suppress   for convenience dependence on any parameters $\md\in \mD$.
Although not necessary, DPG methods are most conveniently explained when \eref{original} is an ultra-weak formulation which is henceforth assumed, see scenario \eref{eq:uwf}.

 Given   a (shape regular) partition or mesh $\cT$  of
the spatial domain $\Omega$, the first step is to derive from \eref{original} 
an additional {\em mesh-dependent} well-posed variational formulation -- first on the continuous level 
\be
\label{meshdep}
\mbox{find $\ug\in\U_\cT$ such that:}\qquad b_\cT(\ug,v)=f(v),\quad v\in \V_\cT,
\ee
where its ingredients are explained in a moment. First, elementwise integration 
by parts introduces trace terms which for $u\in L_2(\Omega)$ are not defined. 
This necessitates introducing, as  auxiliary unknowns, {\em skeleton components} living on $\partial\cT:=\bigcup_{K\in\cT}\partial K$. As the notation $\ug$ indicates, the unknown $\ug$ has therefore
several components $\ug=(u_b,u_s)\in \U_\cT= \U_{\cT,b}\times \U_{\cT,s}$ where 
$$
\U_{\cT,b}:=  \prod_{K\in\cT} L_2(K),\quad \U_{\cT,s}=\Big\{
u_s\in L_2(\partial\cT): \|u_s\|_{\U_{\cT,s}}:= \inf_{w\in \G(\cB): w|_{\partial\cT}= u_s}\|\cB w\|_{L_2(\Omega)}<\infty\Big\}.
$$
The bilinear form $b_\cT(\ug,v)$ has then the structure
\be
\label{bT}
b_\cT(\ug,v)= \sum_{K\in\cT} b_K(u_b,v) + \lll u_s,v\rr_{\partial K} =: \sum_{K\in\cT} B_K(\ug,v),
\ee
and finally, perhaps most importantly, the test space $\V_\cT$ is a {\em broken space}
\be
 \label{VK}
\V_\cT = \prod_{K\in\cT} \V_K,\quad \|v\|^2_{\V_\cT} = \sum_{K\in\cT}
\|\cdot\|^2_{\V_K},
\ee
where the $\V_K$, $K\in \cP$, are ``localized'' versions of the test-space
$\V$ in the underlying ``conforming'' formulation \eref{original}. 
A simple but crucial consequence of this latter feature is the product structure of the
Riesz lift $\cR_\cT:\V_\cT'\to\V_\cT$
\be
\label{RT}
\V_\cT'=\prod_{K\in\cT}\V_K',\qquad \cR_\cT = \prod_{K\in \cT}\cR_K,
\ee
where as before  $\lll \cR_\cT \ell,v\rr_{\V_\cT}=
\ell(v)$, $v\in \V_\cT$, and $\lll \cR_K \ell,v\rr_{\V_K}=
\ell(v)$, $v\in \V_K$. 

Once well-posedness of \eref{meshdep} has been established (to be assumed in what follows), upon
introducing the DPG residual $\cF_\cT(\wg)(v):= f(v)- b_\cT(\wg,v)$, we have by
 \eref{VK}  
$$
\|\cF_\cT(\wg)(v)\|_{\V_\cT'}^2 = \sum_{K\in \cT}\|\cF_K(\wg)\|^2_{\V_K'},\quad 
\|\cF_K(\wg)\|_{\V_K'}= \sup_{v\in \V_K}\frac{f(v)- b_K(\wg,v)}{\|v\|_{\V_K}}.
$$
Thus, 
well-posedness of \eref{meshdep} says that the exact solution $\ug\in \U_\cT$ of \eref{meshdep}
satisfies
\be
\label{resP}
\|\ug- \wg\|^2_{\U_\cT} \eqsim \|\cF_\cT(\wg)(v)\|^2_{\V_\cT'}
= \sum_{K\in \cT}\|\cF_K(\wg)\|^2_{\V_K'} ,\quad \wg\in \U_\cT.
\ee
Although this is the starting point for a posteriori error estimation it is in the above
form not practically feasible since each term $\|\cF_K(\wg)\|^2_{\V_K'}$ still involves a supremization over the infinite-dimensional local test space $\V_K$. 
The key step towards a practical DPG method is to identify for a given
  trial spaces $\U_{\cT}^\delta\subset \U_\cT$ and a data space $\mathbb{F}^\delta$ of piecewise polynomials on $\cT$   of a {\em fixed maximal order}, local {\em test-search spaces} $\wt\V^\delta_K$, $K\in\cT$, such that 
\be
\label{discr-inf-sup}
\inf_{\wg^\delta\in \U^\delta_{\cT},f^\delta\in \mathbb{F}^\delta}\sup_{v^\delta\in \wt\V^\delta_\cT}\frac{b_\cT(\wg^\delta,v^\delta)- f^\delta(v^\delta)}{\|\wg-\cB_\cT^{-1}f^\delta\|_{\U_\cT}\|v^\delta\|_{\V_\cT}}\ge \beta>0,
\ee
holds uniformly in $\cT$. Of course, by our assumption on well-posedness of \eref{meshdep},
\eref{discr-inf-sup} holds trivially when $\wt\V^\delta_\cT$ is replaced by $\V_\cT$.
Note that, upon taking $f^\delta=0$ \eref{discr-inf-sup} requires uniform inf-sup stability of the Petrov Galerkin scheme with trial space $\U_\cT^\delta$ and test space
$$
\V^\delta_\cT := \prod_{K\in\cT} \mT^\delta_K \U^\delta_\cT, \quad \lll \mT^\delta_{B_K}\wg^\delta,v^\delta\rr_{\V_K}= B_K(\wg^\delta,v^\delta),\,\, \forall \, v^\delta \in \wt\V^\delta_K.
$$
Thus, calculating $\mT^\delta_{B_K}\wg^\delta$ amounts to solving a symmetric positive definite system of size ${\rm dim}\,\wt\V^\delta_K$. In complete analogy the {\rm trial-to-residual} map $\mT^\delta_{\cF_K}$ is defined by 
$$
\lll \mT^\delta_{\cF_K}\wg^\delta,v^\delta\rr_{\V_K}= \cF_K(\wg^\delta,v^\delta),\quad v^\delta\in \wt\V^\delta_K,\quad K\in\cT.
$$

Hence the game is to choose, on the one hand, the $\wt\V^\delta_K$ large enough to
satisfy \eref{discr-inf-sup}. On the other hand, one wishes to keep 
$\wt n:= {\rm dim}\,\wt\V^\delta_K$ as small as possible, ideally {\em uniformly bounded}
with respect to $K,\cT$. If this is the case, although quite expensive in quantitative terms, the size of discrete DPG systems still remains uniformly proportional to $\#\cT$
which is a cost-lower bound for all discretizations based on the partition $\cT$.
This program has by now been 
carried out for a wide scope of PDE models, see, e.g., \cite{CDG,DS,DSW,DGM,DGNS,GS}.

Given $\wt\V^\delta_K$ the right hand side $f_K$, restricted as a functional to $\V_K$
may have a part that cannot be seen when restricted to $\wt\V^\delta_K$, so that in general
one misses a {\em data-oscillation error}. We denote by $f_K^\delta$ a suitable projection
to $\wt\V^\delta_K$ which differs from $f_K$ by such a data oscillation. Since under mild assumptions on $f$ these errors are negligible (compared with approximation to $u$) we
ignore such terms in what follows for simplicity of exposition. 
 We can summarize these findings as follows.
\begin{rem}
\label{rem:key}
Validity of \eref{discr-inf-sup} implies that for any $\wg^\delta\in \U^\delta_\cT$
\be
\label{locres}
\|\cF_K(\wg^\delta)\|^2_{\V_K'} \eqsim \|\cF_K(\wg^\delta)\|^2_{(\V^\delta_K)'}
\ee
(up to data oscillation) so that
\be
\label{resPd}
\|\ug- \wg^\delta\|^2_{\U_\cT} \eqsim \|\cF_\cT(\wg^\delta)(v)\|^2_{(\V^\delta_\cT)'}
= \sum_{K\in \cT}\|\mT^\delta_{\cF_K}(\wg^\delta)\|^2_{\V_K} ,\quad \wg^\delta\in \U^\delta_\cT,
\ee
where the quantities $\|\mT^\delta_{\cF_K}(\wg^\delta)\|_{\V_K}$ are now computable.
\end{rem}

\begin{rem}
\label{rem:distinct}
We emphasize that the equivalence \eref{resPd} holds for {\em any} $\wg^\delta\in\U^\delta_\cT$, not just for the current method-projection as
is the case for classical residual error estimator for Galerkin methods, 
see e.g. \cite{Verf}.
This is essential in the present context.
\end{rem}

\begin{rem}
\label{rem:both}
Note that well-posedness of \eref{meshdep} draws on both the bijectivity of $\cB$ and $\cB^*$ underlying the scenarios \eref{eq:sf}, \eref{eq:uwf}.
\end{rem}

Returning to the parameter-dependent case, note first that now, due to the appearance of
the graph norm $\|\cB_\pp\cdot\|_{L_2(\Omega)}$ in the definition of the skeleton, the factor $\U_{\cT,s,\pp}$ may depend now on $\pp$ and so does $\U_{\cT,\pp}:= \U_{\cT,b}\times
\U_{\cT,s,\pp}$. Likewise, the test spaces $\V_{\cT,\pp}=\prod_{K\in\cT}\V_{K,\pp}$
will generally depend on $\pp\in\mD$.
 The roles of $\X, \Y$ from \eref{XY} is then played by
\be
\label{XYT}
\X_\cT:= L_2\bigl(\mD;(\U_{\cT,\pp})_{\pp\in\mD}\bigr),\quad 
\Y_\cT:= L_2\bigl(\mD;(\V_{\cT,\pp})_{\pp\in\mD}\bigr),
\ee
and analogously for their semi-discrete counterparts $\wh\X_\cT, \wh\Y_\cT$.
\begin{rem}
\label{rem:if} 
If the spaces $\X, \Y$ associated with the (parameter-dependent) conforming problem \eref{original} are well-defined as direct integrals, i.e., Property \ref{ass:HilbertIntegral} holds, it is not hard to show that this carries over
to the mesh dependent variants   
mesh-dependent variants $ \X_\cT,  \Y_\cT$ which are also well-defined direct integrals.
In the same manner, Property \ref{ass:bil} carries over to the broken spaces,
so that Theorems \ref{lem:dualHilbertIntegral} and \ref{thm:suff} remain applicable.
\end{rem}

The error-residual relation asserted by Remark \ref{rem:key} holds for any 
piecewise polynomial in the finite-dimensional DPG trial space $\U^\delta_\cT$.
It immediately carries over to the parameter-dependent case as long as
the trial functions are piecewise polynomials in $\U^\delta_\cT$ as functions
on $\Omega$. This motivates the notion of {\em hybrid} hypothesis classes 
$\cH(\Theta)$. By this we mean approximation systems that
  are
comprised of functions $\wg^\delta(x,\pp;\theta)$, $\theta\in\Theta$, that are linear combinations
of piecewise polynomial basis functions $\phi_i(x)$, $i\in \cI_\cT$, with coefficients represented as neural networks with input parameters $\pp\in \mD$ and trainable weight vectors
$\theta$ as outlined in Section \ref{ssec:hybrid}.
Hence, under the given assumptions, Remark \ref{rem:key} implies that 
\be
\label{DPGiloss}
\|\ug - \wg^\delta(\theta)\|^2_{\X_\cT}\eqsim \sum_{K\in \cT}  \int_\mD \|\mT^\delta_{\cF_K}\wg(\cdot,\pp;\theta)\|^2_{\V_{K,\pp}}d\mu(\pp).
\ee
Accordingly,
for any finite training set $\wh\mD\subset\mD$ 
and any $\wg^\delta(\cdot,\pp;\theta) \in \cH(\Theta)\subset \U^\delta_{\cT,\pp}$
one has 
\be
\label{DPGloss}
\frac{1}{\#\wh\mD}\sum_{\pp\in \wh\mD} \|\ug(\cdot,\pp) - \wg^\delta(\cdot,\pp;\theta)
\|^2_{\U_{\cT,\pp}}\eqsim \frac{1}{\#\wh\mD}\sum_{\pp\in \wh\mD}
\sum_{K\in \cT}\|\mT^\delta_{\cF_K}(\wg^\delta)(\cdot,\pp;\theta)\|^2_{\V_{K,\pp}}  ,
\ee
where the right hand side is the desired variationally correct computable loss function.

\section{Network Architectures and Computational Strategies}
\label{sec:arch}

\subsection{Low-rank ResNet hybrid architectures}\label{ssec:DNNhybrids}
We next discuss in further detail the hybrid representation 
format introduced in Section \ref{ssec:hybrid} for approximating solutions $\pp\mapsto u(\pp)$ to \eref{wlB}. Separating spatio-temporal and parametric variables, we consider
  piecewise polynomials as functions of spatio-temporal variables 
with {\em parameter dependent} coefficient functions as in \eqref{eq:hybridformat} that can, for instance, be represented as DNNs.
In doing so we exploit the systematic convergence of finite element methods in small spatial dimensions in combination with reliable concepts for evaluating dual norms.

This format is instrumental for strategy \eref{eq:uwf} to construct computable variationally correct loss functions without the use of  min-max optimization.
It is not strictly necessary for scenario \eref{eq:sf}, but still offers several significant advantages
there as well. First,
rigorously enforcing essential boundary conditions for DNNs is an issue, see \cite{Canuto} for various strategies. 
The hybrid format instead greatly facilitates incorporating essential (spatial) boundary conditions.    Second, since the input parameters of the coefficient neural networks are just the model parameters, training requires only efficient backpropagation, and the neural network activation function need not have higher than first-order weak derivatives. 
When using DNNs with both spatio-temporal and parametric variables 
as input parameters, spatial-parametric mixed derivatives are required. This has to be done with great care to avoid a significant loss of efficiency in the evaluation of gradients with respect to neural network parameters; see, for example, \cite{Pardo}. 

Recall that, as noted in Section \ref{ssec:proj}, we are especially interested in nonlinear approximations that can be more efficient than reduced models based on projection to some fixed linear subspace, such as reduced basis methods. 
This rules out some typical elements of neural network architectures, such as linear output layers, where the output of the previous layers (of length $n$) is multiplied onto some fixed matrix $A^{N\times n}$ to produce the final output, where $N$ is the size of the spatio-temporal discretization. Such a structure is used for parametric problems, for example, in \cite{GPRSK:21}. 
In this case, however, the columns of $A$ play the role of a reduced basis, and the remaining layers of the neural network simply produce coefficients for these basis elements.
Moreover, rather than optimizing heavily overparameterized neural networks as common in machine learning, in our setting we are rather interested in networks that yield data-sparse approximations.

For the spatial or spatio-temporal discretization, we assume a hierarchy of meshes $\cT_1, \ldots, \cT_J$ generated by uniform subdivisions, and we approximate the solution on the finest mesh $\cT_J$. For $j=1,\ldots,J$, we have bases $\{ \phi_i^{(j)} \}_{i \in \mathcal{I}_j}$ of the respective finite element spaces. As outlined in \eqref{eq:hybridformat}, we approximate solutions depending on the parameter $\pp$ in the form
\[
 p\mapsto  \sum_{i \in \mathcal{I}_J} \mathbf{u}_{\mathrm{NN},i}(p; \theta) \phi_i^{(J)},
\]
where the vector $\mathbf{u}_{\mathrm{NN}}(\cdot; \theta)$ is a neural network with trainable parameters $\theta$.

We next disuss the particular network architectures used for the coefficients $\mathbf{u}_{\mathrm{NN}}$, which are based on \emph{residual networks} (ResNets). These are widely used in scientific computing due to their favorable numerical stability properties. 
We use residual networks with layers $\Phi_{(A,W,b)} \colon \R^n\to\R^n$ for $n$ to be determined of the particular form 
\begin{equation}\label{eq:resnetlayer}
	z \,\mapsto\, \Phi_{(A,W,b)}(z) = z+A\sigma( W z + b),
\end{equation}
where $A\in \mathbb R^{n \times r}$, $W\in \mathbb R^{r\times n}$, $b \in\mathbb R^n$, with componentwise application of the activation function $\sigma\colon \R\to\R$. Here, we aim for $r \ll n$ to achieve a data-sparse representation, and by analogy to low-rank matrix representations, we will call $r$ the \emph{rank} of the layer.

With $n_j = \#\mathcal{I}_j$, let the (sparse) prolongation matrices $P_j\in\mathbb R^{n_j,n_{j-1}}$ be given for $j = 1,\ldots,J$ by
\begin{equation}\label{eq:prolongationdef}
    \sum_{i \in \mathcal{I}_{j-1}} \mathbf{v}_i \phi^{(j-1)}_i =
	\sum_{i \in \mathcal{I}_{j}} (P_j \mathbf{v})_i \phi^{(j)}_i
	   , \quad \mathbf{v} \in  \R^{n_{j-1}}\,.
\end{equation}
For any matrix $Q \in R^{n_j,n_{j-1}}$ with the same sparsity pattern as $P_j$, we accordingly introduce the \emph{prolongation layers} $\Pi_{Q} \colon \R^{n_{j-1}} \to \R^{n_j}, \, z\mapsto Q z$.

Starting from a fully connected layer $\Lambda_0$ given by
\[  \R^d  \ni p \mapsto \Lambda_0(p) = \sigma(W_0 p + b_0) \in \R^{n_1}, \]
the eventual neural networks are composed of layers $\Lambda_j \colon \R^{n_j} \to \R^{n_j}$ of the form
\begin{equation}\label{eq:Lambdaj}
   \Lambda_j = \Phi_{(A_{j,L_j}, W_{j,L_j}, b_{j,L_j})} \circ \cdots
    \circ  \Phi_{(A_{j,2}, W_{j,2}, b_{j,2})} 
	\circ \Phi_{(A_{j,1}, W_{j,1}, b_{j,1})} \circ \Pi_{Q_j}
\end{equation}
with some $L_j \in \N$, for $j = 1,\ldots,J$. 
The approximation of $\mathbf{u}_{\mathrm{NN}}$ is then of the form
\[
	\mathbf{u}_{\mathrm{NN}}( \cdot; \theta)
	 = \Lambda_J \circ \cdots \circ \Lambda_1 \circ \Lambda_0,
\]
with trainable parameters
\begin{multline*}
  \theta = (A_{J,L_J}, W_{J,L_J}, b_{J,L_J}, \ldots, A_{J,1}, W_{J,1}, b_{J,1}, Q_J, \ldots, \\ 
   \ldots, A_{1,L_1}, W_{1,L_1}, b_{1,L_1}, \ldots, A_{1,1}, W_{1,1}, b_{1,1}, Q_1, W_0, b_0)	\,.
\end{multline*}
Note that multilevel structure in discretizations is also used, for example, in \cite{CGE:23} with a more specialized architecture for elliptic parametric problems. 

 \subsection{Computational strategies for training}
 
 In the experiments, in each test we use a fixed rank parameter $r$ for all ResNet layers as in \eqref{eq:resnetlayer} and vary the depth of the neural network. We exploit the multilevel structure of the architecture in performing a gradual refinement by subsequently inserting additional ResNet layers into the layer groups $\Lambda_j$ as in \eqref{eq:Lambdaj}, which amounts to increasing $L_j$. These additional ResNet layers are initialized with zero weights. The layer $\Lambda_0$ is initialized with a random initialization with Gaussian entries of vanishing mean and variance $10^{-3}$.
The matrices $Q_1, \ldots, Q_J$ are initialized as $Q_j = P_j$, with $P_j$ as in \eqref{eq:prolongationdef}, for each $j$.
 As common in the context of neural network approximations for PDEs, we use L-BFGS for minimizing the loss functions.

\section{Numerical Experiments}
\subsection{Validation}\label{ssec:validation}
In subsequent experiments we will {\em not}
prescribe the ``exact solution'', typically as an analytically representable smooth
function, but rather discuss scenarios that bring better out the characteristic features 
of the problem under consideration. In the elliptic case this permits low regularity of solutions due to the contrast in the diffusion coefficients. In the case of transport
equations the tests include the appearance of shear layers.
Hence, we do not have the ``ground truth'' at hand by which we could validate numerical results. Instead, we exploit that for each parameter $\pp$ we can apply a finite element
method which we know is based on a stable variational formulation so that corresponding
residuals in the right norm tightly reflect the achieved accuracy with respect 
to the exact solution. 

Just for the sake of illustrating the performance of
the proposed estimation method we compute as ``ground truth''   in addition finite element
solution coefficients $\mathbf{u}_{\rm FE}(\pp_i)$ (with respect to the appropriate problem dependent framework) for 
a set $\wh\mD_{\rm test}\subset \mD$
of $N_{\rm test}$ test samples $\pp_i$. To assess the corresponding  ``generalization error'' of FE-solutions
with respect to the parameters we  evaluate the respective parameter-dependent variationally correct 
loss function  at these test samples. Since we have made sure that the underlying variational formulation is stable we know that these quantities tightly reflect the
error with respect to the respective trial norm $\|\cdot\|_{\U_\pp}$.

As will be seen in more detail later, for the hybrid format
the loss function can, with the notation
\[
  \norm{\mathbf{x}}_{\mathbf G}^2 = \langle \mathbf{x}, \mathbf{G} \mathbf{x} \rangle,
\]
 be represented as $\norm{\mathbf B_{\pp_i} \mathbf u_{\mathrm{FE}}(\pp_i)-\mathbf{f}}_\mathbf{G}^2$, where $\mathbf B_{\pp_i}$ is the representation of the operator $\cB_{\pp_i}$ on the FE space and $\mathbf G$ represents the Riesz lift.
 
The following first validation quantity is then the relative error achieved 
by the FE solution over $\wh\mD_{\rm test}$ in the mean square sense. 
\begin{align}
\label{err1}
\epsref &=\sqrt{ \frac1{N_{\mathrm{test}}} \sum_{i=1}^{N_{\mathrm{test}}} \frac{\norm{\mathbf B_{\pp_i}\mathbf u_{\mathrm{FE}}(\pp_i)-\mathbf{f}}_{\mathbf G}^2}{\norm{\mathbf{f}}_{\mathbf{G}}^2} } ,
\end{align}
Note that the size of this quantity is determined by the spatial resolution of the FE space (provided $N_{\rm test}$ is large enough) and reflects the achievable 
spatial discretization error.

We denote by $u_{\rm net}(\cdot;\pp)$ the result of our training over the given
hybrid hypothesis class which we can now compare with $u_{\rm FE}(\cdot;\pp)$ for 
each test sample $\pp_i\in \wh\mD_{\rm test}$. Rather than computing the norms
of the various solution components in $\X$ (which is a product space) we exploit
that errors in $\X$ are equivalent to residuals in $\Y$, which in turn can be
assessed through the variationally correct loss function. 

Finally, we monitor the generalization error of the prediction $u_{\rm net}$ directly via the
variationally correct loss function, tested over $\wh\mD_{\rm test}$
\be
\label{err3}
\epspred =\sqrt{\frac{1}{N_{\rm test} }\sum_{i=1}^{N_{\rm test}} \frac{\norm{\mathbf B_{\pp_i} \mathbf u_{\mathrm{net}}(\pp_i)-\mathbf{f}}^2_{\mathbf G}}{\norm{\mathbf{f}}^2_{\mathbf G}} }
\ee
This is the quantity that would be evaluated in an application as a certification.
Throughout this section, \emph{level} refers the number $J$ as in Section \ref{ssec:DNNhybrids} of uniform refinements 
used in the spatial discretization starting from two triangles; that is, a triangulation of level $j$ has $2^{j}+1$ grid points in each spatial variable. \emph{Rank} refers to the width of the ResNet layers as in \eqref{eq:resnetlayer}.

With the architectures described in Section \ref{sec:arch}, we consider numerical experiments for two different types of PDEs.
Throughout our tests  for the hybrid models we used a LeakyReLU activation functions $\sigma(x) = \max\{x, 10^{-3} x\}$.
In all experiments (unless indicated otherwise) we used L-BFGS optimization, retaining a maximum of nine previous BFGS updates, with a maximal number of 5000 iteration for the single-parameter case and  20000 iterations for the other experiment. 

\subsection{ An elliptic problem in scenario  (\ref{eq:sf})}
We deliberately choose an example with non-smooth diffusion coefficients that might lead to solutions with low spatial regularity
 generally not belonging to $H^2(\Omega)$. 
Specifically,   we consider \eref{Poisson} with $d_\pp=4$ parameters, where
  $f=1$ and $a(\pp) = \sum_{i=1}^4(\pp_i\alpha_i(x))$, and $\alpha_i$ is the indicator function of one of the four squares of a uniform subdivision. The indicator function are ordered such that $\alpha_1$ is in the top left and $\alpha_4$ is in the bottom right. 

As in \eref{mixed} we consider the corresponding first order operator
$$
\mathcal B_\pp : [\boldsymbol{\sigma}, u]\mapsto \begin{pmatrix}
	\boldsymbol{\sigma} -a(\pp)\nabla u\\ -\operatorname{div}\boldsymbol{\sigma}
\end{pmatrix}
$$ 
which maps $[\boldsymbol{\sigma}, u]\in H(\mathrm{div},\Omega)\times H^1_0(\Omega)=:\U$ to $L_2=
L_2(\Omega;\R^2)\times L_2(\Omega)$. Thus the trial spaces do not depend on the
parameters in an essential way and we can work in scenario \eqref{eq:sf}.
The Riesz lift in this case is the identity, and thus $\mathbf G$ is the scaled identity matrix.
For the spatial discretization, we use piecewise linear elements in $H^1_0(\Omega)$ for $u$ and lowest-order Raviart-Thomas elements in $H(\operatorname{div};\Omega)$ for $\boldsymbol{\sigma}$.

For $N$ training points $\pp_i\in 
\wh\mD\subset \mD$  the loss function reads 
$$
\frac 1 N \sum_{i=1}^N \Bigl( \| \boldsymbol{\sigma}(\pp_i)-a(\pp_i)\nabla u(\pp_i) \|_{(L^2(\Omega))^d}^2+ \| \operatorname{div}\boldsymbol{\sigma}(\pp_i)+ f \|_{L^2(\Omega)}^2 \Bigr).
$$
Note that in \cite{FRO:24,MR:21}, a similar loss function has been used on the spatial domain in a parameter-independent problem, where the flux variable $\boldsymbol{\sigma}$ is interpreted as a `certificate' that leads to a strict upper bound of the error in $u$.

\subsubsection{Numerical results in the hybrid case}\label{ssec:ellnumer}

Table \ref{tab:Elliptic} displays the relative residuals $\epsref$ of the Least-Squares Galerkin (LSG) solution and of $\epspred$ the neural network prediction, where the neural network has 15 layers in total. 
In Figures \ref{fig:snapshotElliptic1} and \ref{fig:snapshotElliptic2}, two snapshots based on the DPG solution and the network prediction are displayed. As one can see, the hybrid method captures the essential features of the FEM solutions and also provides an error reduction under refinement.

\begin{table}[h!]
	\centering
		\begin{tabular}{|c|c|c|}
			\hline
			& Level 3, Rank 40& Level 4, Rank 60  \\
			\hline
			$\epsref$ &0.055&0.028\\
			\hline
			$\epspred$ &0.057 & 0.039 \\
			\hline
		\end{tabular}
	\vspace{3mm}
	\caption{The relative norms of the residuals for two different refinement levels for a network with 15 layers and 1000 training samples}
	\label{tab:Elliptic}
\end{table}

\begin{figure}[h!]
		\centering
	\begin{tabular}{ccc}
		{\footnotesize LSG solution for elliptic case}& {\footnotesize prediction by NN in elliptic case}& {\footnotesize differences of LSG and NN}  \\
\includegraphics[width = 0.3\textwidth]{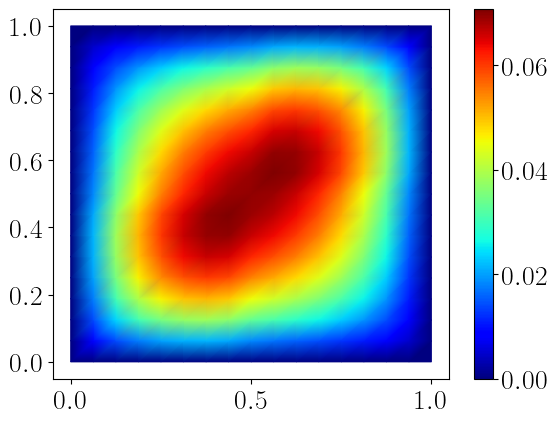}&
\includegraphics[width =0.3\textwidth]{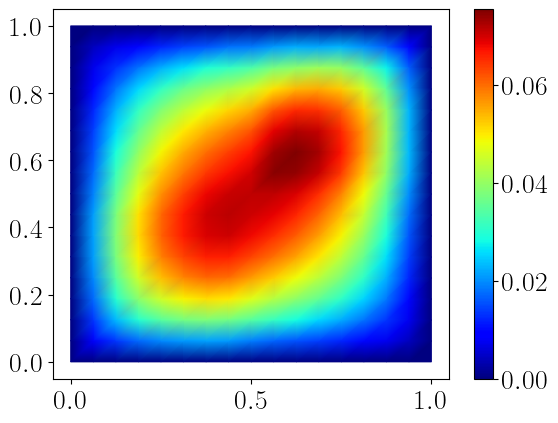}&
		\includegraphics[width =0.305\textwidth]{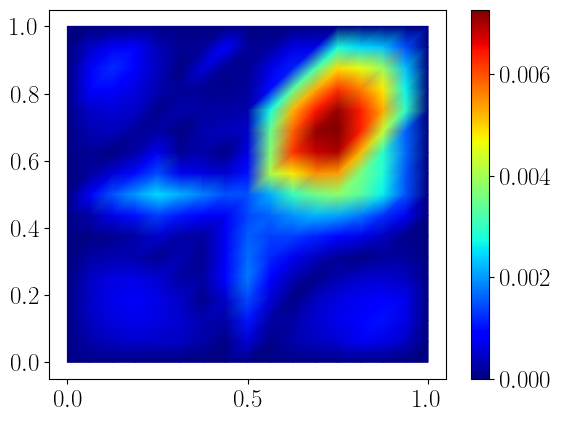}
	\end{tabular}
			
		\caption{Snapshot of the LSG solution, the prediction by the neural network (with rank 60 and 15 layers trained on 1000 samples) and the pointwise absolute value of the difference for a level four refinement evaluated for the parameter $\pp = [0.65,1.45,1.45,0.65]$ corresponding to a checkerboard configuration.}
		\label{fig:snapshotElliptic1}
\end{figure}
\begin{figure}
	\centering
		\begin{tabular}{ccc}
			{\footnotesize LSG solution for elliptic case}& {\footnotesize prediction by NN in elliptic case}& {\footnotesize differences of LSG and NN}  \\
		\includegraphics[width = 0.3\textwidth]{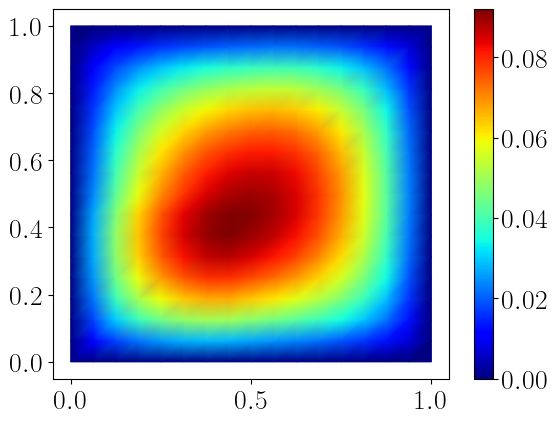}&
		\includegraphics[width =0.3\textwidth]{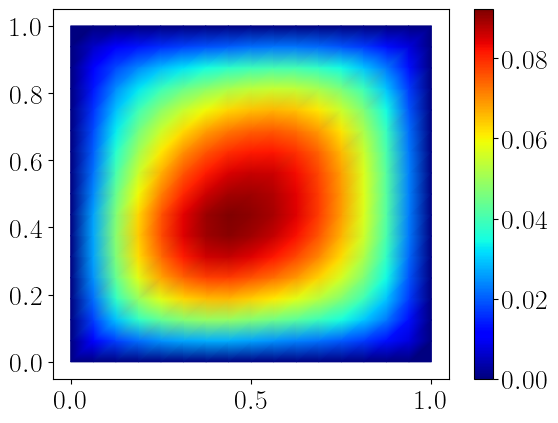}&
		\includegraphics[width =0.305\textwidth]{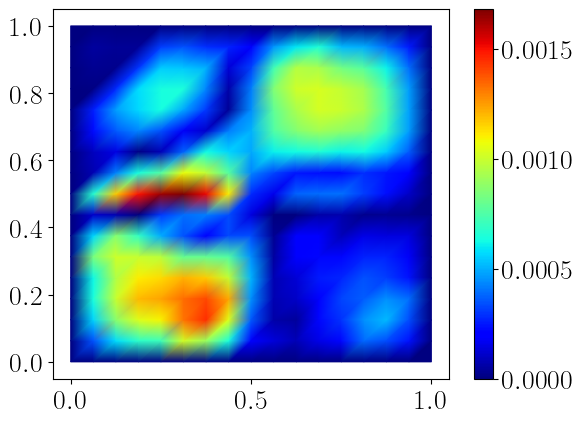}
		\end{tabular}
		\caption{Snapshot of the LSG solution, the prediction by the neural network (with rank 60 and 15 layers trained on 1000 samples) and the pointwise absolute value of the difference for a level four refinement and  given parameter $\pp = [0.53,1.09,0.84,0.82]$ }
		\label{fig:snapshotElliptic2}
\end{figure}

\subsubsection{Comparison to a vanilla PINN approach}
Recall from Remark \ref{rem:PINN} that in the present setting, a standard PINN  approach, applied to the first order system, 
does not incur any variational crime; see, for example, \cite{CaiPINN} for earlier work in this
regard. Of course, one still has to be careful in enforcing
 essential boundary conditions. 
This is in contrast to PINN applied to the original second order formulation.
Regarding boundary conditions, a weighted $\ell_2$ term is
not strictly correct, and we refer to \cite{Bertol,Canuto} for respective
alternative  strategies. We compare below two options.

 To that end, we take a separate neural network for $u$, $\boldsymbol{\sigma}_{1}$ and $\boldsymbol{\sigma}_{2}$ each of them being a fully connected neural network from $\mathbb R^6$ to $\mathbb R$ with a final linear layer, i.e. we have $2+4$ variables in the input and consider the full parametric problem. We used the sigmoid activation function. Furthermore, the networks width maximal with 196 are trained with ADAM with decreasing step sizes from $0.05$ to $0.005$ and a maximum of 2500 iterations. Here, we used the NeuralPDE.jl package \cite{NeuralPDE} in its default setting, that is, we use numerical differentiation for the derivative with respect to the spatial variables. The architectures PINN-B uses an $L_2$-like loss to enforce the boundary conditions, whereas PINN-T uses the trimming function $x(1-x)y(1-y)$ multiplied to the network output to enforce the boundary conditions.

\begin{table}[h!]
	\centering
		\begin{tabular}{|c|c|c|c|c|c|c|}
			\hline
			&H-Net Level 3 & H-Net Level 4& PINN-B-1 & PINN-B-2 & PINN-T-1 & PINN-T-2 \\
			\hline
			  $\epspred$ & 0.057 & 0.039 & 0.05 & 0.046 & 0.036 &0.069\\
			\hline
			dof &26852&  165575& 119364 & 175800 &119364 & 175800\\
			\hline
		\end{tabular}
	\vspace{3mm}
	\caption{The relative dual norm of the residual  in the elliptic problem of the prediction by a hybrid Low-rank ResNet (H-Net) for refinements of level 3 and 4 respectively as well as for a PINN approach with weakly enforced boundaries (PINN-B) and with trimming function (PINN-T) with three and four dense layers, respectively (PINN-T/B-1, PINN-T/B-2)}
	\label{tbl:Pinn}
\end{table}
In Table \ref{tbl:Pinn}, one can see that with a similar number of weights and biases, the hybrid architectures can perform at the same level as the PINNs or even outperform them. It is important to note, however, that the hybrid approximations offer a more systematic way to increase the network size and then actually decrease the loss. In our experiments, the hybrid approach exhibits a significantly enhanced optimization efficacy and predictability.

\subsection{ Linear transport with parameter-dependent convection field in scenario (\ref{eq:uwf})}
\label{sssec:transportnum}
We consider the transport model from Section \ref{sssec:transport}. Note
that now a rigorous interpretation of the continuous reference loss as an expectation
requires   resorting to the concept of direct integrals, see Section \ref{ssec:DPG}. Moreover,
such models exhibit several significant obstructions. First, standard Galerkin or discontinuous Galerkin discretizations are not uniformly inf-sup stable.
Second, one faces a generally non-smooth parameter dependence, as well as an essential dependence of the graph spaces and boundary conditions on the
parameters.
In this case, we find scenario \eqref{eq:uwf} more adequate. In order to highlight related effects,
we consider the simple case of a single parameter in the convection field as well as
the case of a convection field depending on several parameters.

For the spatial discretization, we employ a Discontinuous Petrov Galerkin finite element method with piecewise constant elements on the triangles as well as piecewise linear elements on the skeleton in the trial space and piecewise quadratic elements for the test space as in \cite{BDS}.

Denote by $ \mathbf u_{\rm net} $ the prediction of the neural network and by $ \mathbf u_{\rm DPG}(\pp)$ the DPG solution for given parameter $\pp$. 
Let $\mathbf B_{\pp_i}$ be the representation of the operator $\mathcal B_{\pp_i}=\mathbf b(\cdot,\pp_i)\nabla$ in the finite element spaces, $\mathbf G_{\pp_i}$ be the parameter dependent representation of the Riesz lift and  $\mathbf f$ be the evalutation of $\langle f, \cdot\rangle$ at the basis functions of the discretized test space.  
Then, given training samples $\pp_i$ the loss function reads 
$$\frac 1 N\sum_{i=1}^{N} \norm{\mathbf B_{\pp_i} \mathbf u_{\rm net}(\pp_i)- \mathbf f}^2_{\mathbf G_{\pp_i}}.$$
We analyze the quality of the obtained predictions again 
by monitoring the quantities $\epspred$ and $\epsref$ introduced in Section \ref{ssec:validation} for the above loss function.

 \subsubsection{One Parameter Linear Transport Equation}
First, we   consid  a linear transport equation in two spatial variables parameterized by the angle of the velocity field, i.e. 

$$ [\cos(\pi\pp),\sin(\pi\pp)]\cdot \nabla u + c\ u=f$$
with $$f= \chi_{[0.25,0.5]^2},\quad c=0$$
and  $\pp\in(0,\frac 1 2)$.

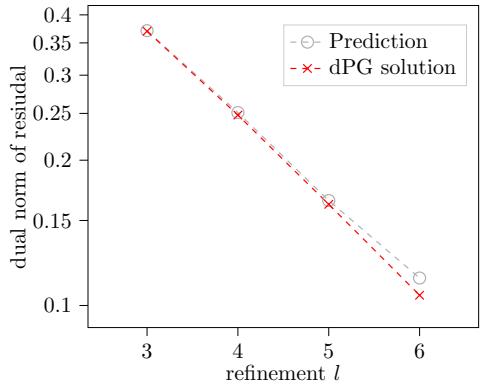
\begin{figure}[h!]
	\centering
\begin{tikzpicture}[scale=0.75]
	
	\definecolor{darkgray176}{RGB}{176,176,176}
	\definecolor{lightgray204}{RGB}{204,204,204}
	\definecolor{steelblue31119180}{RGB}{31,119,180}
	\begin{axis}[
		legend cell align={left},
		legend style={
			fill opacity=0.8,
			draw opacity=1,
			text opacity=1,
			at={(0.97,0.75)},
			anchor=south east,
			draw=lightgray204
		},
		tick align=outside,
		tick pos=left,
		x grid style={darkgray176},
		xlabel={refinement $l$},
		xmin=2.35, xmax=6.65,
		xtick style={color=black},
		y grid style={darkgray176},
		ymin=0.09, ymax=0.418279102007188,
		ylabel={dual norm of resiudal},
		ymode=log,
		ytick style={color=black},
		ytick={ 0.1,0.15,0.2,0.25,0.3,0.35,0.4},
		yticklabels={
			\(\displaystyle {0.1}\),
			\(\displaystyle {0.15}\),
			\(\displaystyle {0.2}\),
			\(\displaystyle {0.25}\),
			\(\displaystyle {0.3}\),
			\(\displaystyle {0.35}\),
			\(\displaystyle {0.4}\)
		}
		]
		\addplot [semithick, darkgray176, dashed, mark=o, mark size=3, mark options={solid}]
		table {%
			3  0.371
			4   0.251
			5   0.165
			6  0.114
		};
		\addlegendentry{Prediction}
		\addplot [semithick, red, dashed, mark=x, mark size=3, mark options={solid}]
		table {%
			3   0.37
			4   0.248
			5   0.162
			6   0.105
		};
		\addlegendentry{dPG solution}
	\end{axis}
	
\end{tikzpicture} 
	\caption{Visualization of the relative dual norm of the residual of the prediction by the neural network versus the DPG solution for a network with rank 20 trained on 1000 samples.}
	\label{fig:academic_example}
\end{figure}

\begin{figure}[h!]
		\centering
			\begin{tabular}{ccc}
				{\footnotesize DPG solution for transport case}& {\footnotesize prediction by NN in transport case}& {\footnotesize differences of DPG and NN}  \\
				\includegraphics[width = 0.3\textwidth]{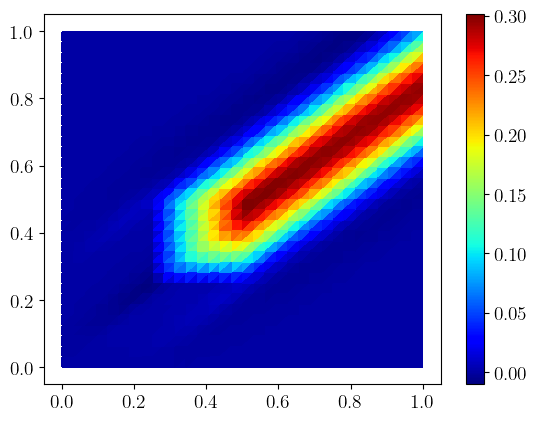}&
			\includegraphics[width =0.3\textwidth]{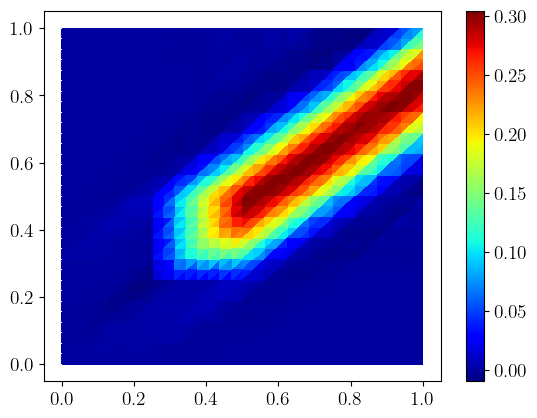}&
			\includegraphics[width =0.31\textwidth]{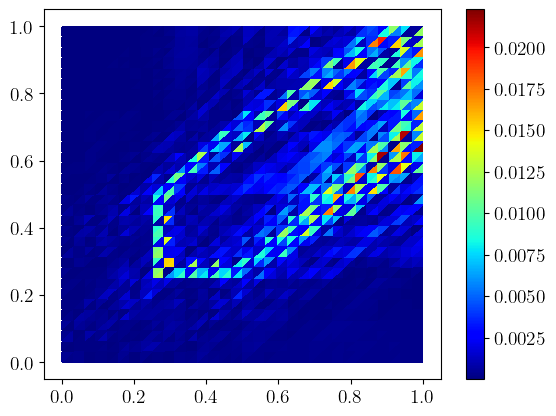}
			\end{tabular}
			\caption{Snapshot for $\pp = 0.2$ of piecewise constant elements for the DPG solution and the prediction by the neural network with rank 20 and 15 layers, trained on 1000 samples for level 6. }\label{fig:snapshotElliptic}
\end{figure}
	\begin{figure}[h!]
		\begin{center}
			\begin{tabular}{ccc}
				{\footnotesize DPG solution for transport case}& {\footnotesize prediction by NN in transport case}& {\footnotesize differences of DPG and NN}  \\
				\includegraphics[width = 0.3\textwidth]{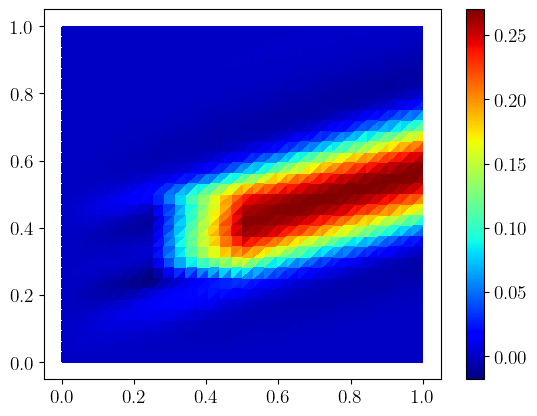}&
			\includegraphics[width = 0.3\textwidth]{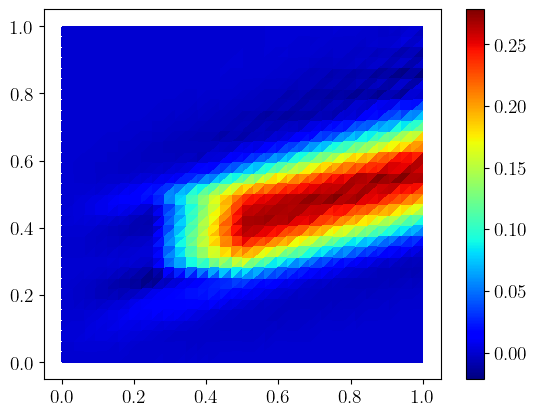}&
			\includegraphics[width =0.31\textwidth]{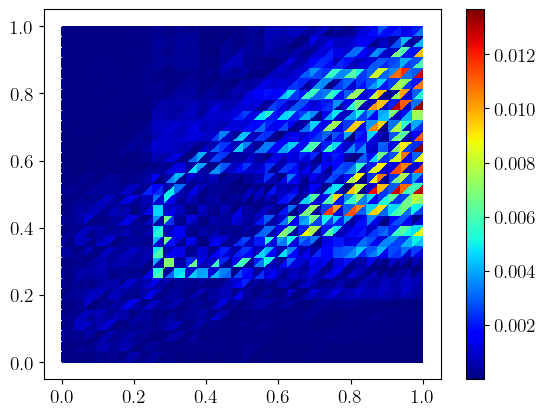}
						\end{tabular}
			
			\caption{Snapshot for $\pp = 0.1$ of piecewise constant elements for the DPG solution and the prediction by the neural network with rank 20 and 15 layers, trained on 1000 samples for level 6. }
		\end{center}
\end{figure}

Table \ref{Tab:err3} presents the accuracy achieved by the predictions. It displays the
values of $\epspred$ for various training settings. The value in the bottom   row
 shows the accuracy $\epsref$ achieved by the individual DPG solutions, see \eref{err1}.
We see that both quantities $\epsref$ and $\epspred$ are very close, reflecting a very good
estimation quality in this case. Of course, the rather coarse spatial discretization,
the very low regularity of the exact solutions and the low order discretization
cause an  overall poor accuracy. 

\begin{table}[h!]
	\centering
		\begin{tabular}{|c|c|c|c|c|}
			\hline
			Rank&Layers& { 1000 samples } &{ 1500 samples }& { 2000 samples }  \\
			\cline{1-5}
			\multirow{2}{*}{ 10}&11   &0.263 & 0.2618 & 0.262  \\
			&15  & 0.252  & 0.255 &  0.258\\
			\cline{1-5}
			\multirow{2}{*}{15}&11   & 0.251  & 0.252 & 0.255\\
			&15&   0.25   & 0.25  & 0.25\\
			\cline{1-5}
			\multirow{2}{*}{20}&11  & 0.253  & 0.253  & 0.253\\ 
			&15  & 0.25  & 0.25  & 0.25\\
			\cline{1-5}
			\multirow{2}{*}{30}&11 & 0.251  & 0.249 & 0.252\\ 
			&15  & 0.249  & 0.249  & 0.249\\
			\hhline{|=|=|=|=|=|}
			\multicolumn{4}{|c|}{relative DPG residual $\epsref$}&0.248\\
			\hline
			
		\end{tabular}
		\vspace{3mm}
		\caption{$\epspred$: The relative dual norm of the residual of the prediction by a hybrid low-rank ResNet for the transport problem with one parameter and a  refinement of level 4}\label{Tab:err3}
\end{table}

 \subsection{Multi-parametric linear transport with piecewise linear vector field}

Second, we  consider  a linear transport equation in two physical dimensions, that is, one temporal and one spatial dimension, where the convection field is parameterized by hat functions in space with coefficients constant in time. The problem thus takes the form
\[  \biggl[1,\,\sum_{i=1}^7 a_i(x) \pp_i \biggr] \cdot \nabla_{t,x} u + c\,u=f \]
with  
\[   f= \chi_{[0.25,0.5]^2},\quad c=0, \]
where $a_i$ are hat functions in space constant in time on the interior nodes of a grid with 9 nodes and $\pp_i\in [0.3,1.3]$. We use the same error measures as above in \eref{err1} and \eref{err3}. 

Table \ref{Tab7-4:err3} shows the results for $\epsref$ and $\epspred$ of analogous 
tests for the case of $7$ parameters, in addition to the two spatio-temporal dimensions. In principle, the results are very similar, except for 
a slightly lower agreement between the trained prediction and the DPG solutions evaluated at the test samples. In view of the high dimensionality, this is perhaps not surprising. We observe that increasing the rank does not necessarily improve the achieved accuracy. However, this is likely due to the increased complexity of the optimization task, where it is generally difficult to assess whether a given optimization schedule stalls on a mediocre plateau or has converged.   
\begin{table}[h!]
		\begin{tabular}{|c|c|c|c|}
			\hline
			Rank &11 Layer &13 Layer &15 Layer  \\
			\hline
			10 & 0.454 &0.447 &0.44  \\
			\cline{1-4}
			15  & 0.442& 0.431 & 0.428  \\
			\cline{1-4}
			{20}&0.481 & 0.464& 0.455   \\ 
			\hhline{|=|=|=|=|}
			\multicolumn{3}{|c|}{relative DPG residual $\epsref$} & 0.337\\\hline
		\end{tabular}
\vspace{3mm}
	\caption{$\epspred$: The relative dual norm of the residual of the prediction by a hybrid low-rank ResNet for the transport equation with seven parameters and a refinement of level four.}
	\label{Tab7-4:err3}
\end{table}

 \section{Summary and Concluding Remarks}\label{sec:concl}

Our focus in this work has been the approximation of solution manifolds of different classes of parameter-dependent PDEs, with dissipative elliptic or parabolic problems as well as linear transport problems as particular examples. 
A central issue when using nonlinear model classes, such as neural networks, for such approximations is how one can assess the quality of approximate solutions using only computable information. Such control of errors is of crucial importance for surrogate models produced by operator learning, where known training methods lack reliability.

Our approach to controlling errors centers on what we call \emph{variationally correct loss functions}. That is, at any stage of an optimization process,
the current magnitude of the loss is a tight bound for the error in a model-compliant norm, without imposing additional regularity requirements, while at the same time the loss remains computationally realizable. Based on reformulations as first order systems of PDEs, we have considered formulations where the test space is an $L_2$-space, as well as discontinuous Petrov-Galerkin (DPG) methods based on ultra-weak formulations with $L_2$-spaces as trial spaces. Such formulations are available for wide classes of problems beyond the examples considered here.

The common conceptual starting point is an ideal residual loss resulting from the reformulation of the {\em family} of parameter dependent problems as a {\em single} well-posed variational problem. Analyzing such losses, as limits of empirical versions based on parameter samples, involves integrals over the parameter domain, also in cases where
trial and test spaces differ essentially under parameter variation. In order
to still obtain  a generally applicable notion of measurable parameter-dependent functions, leading to expectations as limit loss, we resort to the concept of {\em direct integrals} of Hilbert spaces extending the notion of Bochner spaces.
The concepts put forward here also provide a basis for treating parametric nonlinear PDEs, which will be a subject of further work.

To make the computation of these residual loss functions possible in particular for the DPG-type methods, we have considered a hybrid approximation format combining finite element discretizations on the spatio-temporal domain with neural network approximation of the parameter dependence. Compared to a neural network approximation in all variables (as in standard PINN or operator learning approaches), such a neural network approximation of finite element basis coefficients as functions on the parameter domain has several practical advantages. However, it requires neural networks with large output dimension -- as we have noted, this imposes some strong requirements on possible sparse network architectures. With the particular architectures that we use, we may avoid a curse of dimensionality for large parametric dimensions, a point that requires further investigation.

Aspects that we have not considered here are the approximation of the ideal loss by the corresponding empirical risk as in \eqref{deviation}, which depends also on the process of choosing the samples, and the related problem of estimating generalization errors. These question will be addressed in more detail in future work. 

Our numerical experiments are only intended to provide a proof of concept on comparably small-scale examples for the combination of variationally correct residual losses with hybrid approximations.   
In the test cases, due to the proportionality between residuals and error norms, we can assess the total solution error at each sampling point also in the absence of a reference solution, and thus, rather than resorting to manufactured solutions, we can choose examples reflecting the typical difficulties arising in each problem.
As we have noted, optimization strategies commonly used in machine learning are not necessarily the most effective for the regression problems that need to be solved here, and we thus use quasi-Newton methods in our numerical tests. The further study of solvers is another direction of future work.

In summary, we can conclude that the proposed mechanisms can enable accuracy control in approximating solution manifolds of parameter-dependent problems using nonlinear model classes such as neural networks. 
This is of particular importance in the application of such surrogate models
for the underlying solution operators in regression problems in state estimation and parameter identification problems.

\bibliographystyle{plain}
\bibliography{VC-reg}

\appendix
\section{Proof of Lemma \ref{lem:V_pDirectIntegral}}
\label{appendix:A}
The proof of Lemma \ref{lem:V_pDirectIntegral} will be done in several steps whose
proofs, in turn, use standard techniques concerning denseness results. 
For the convenience of the reader we present here a coherent version.
Throughout this section we assume that Assumption \ref{ass:vectorfield} is valid.

The first step ensures denseness of smooth functions in $\G^*_\pp$ for elements that have support compactly contained in $\Omega$. From \cite[Lemma II.1(i)]{DiPerna}, we obtain the following result.
\begin{lemma} 
\label{lem:MollifieronSubdomain} 
	Let $\psi_\e(x)=\e^{-d}\psi\big(\e^{-1} {x}\big)$ with 
$\psi\in C^\infty(\R^d)$, ${\rm supp}\,\psi\subseteq B_1$ (where in general
$B_\tau$ denotes the ball of radius $\tau$ in $\R^d$) be a mollifier,    and let $v\in\G^*_\pp$.  Let $\Omega' $ be any subdomain with compact closure in $\Omega$. 
Then, for $\e \le \e(\Omega'):= {\rm dist}(\Omega',\partial\Omega)$ the convolution 
 $ \psi_\e * v$ is well defined on $\Omega'$ and belongs to $\G^*_\pp(\Omega'):= \G^*_\pp|_{\Omega'}$, with the canonical understanding of $\|w\|^2_{\G^*_\pp(\Omega')}:= \|w\|^2_{L_2(\Omega')} + \|\cB^*_\pp w\|^2_{L_2(\Omega')}$. Moreover, 
\be
\label{conv}
\|\cB^*_\pp(\psi_\e * v - v)\|_{L_2(\Omega')}\to 0 \quad\text{as}\quad \e \to 0.
\ee
\end{lemma}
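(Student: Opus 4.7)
The plan is to reduce the assertion to the Friedrichs/DiPerna--Lions commutator lemma for transport operators with Lipschitz coefficients. For every $x \in \Omega'$ and $\e < \e(\Omega')$, the kernel $y \mapsto \psi_\e(x-y)$ has support in $\{y \in \R^d \colon |x-y| \le \e\} \subset \Omega$, so $(\psi_\e * v)(x)$ is well defined and the convolution lies in $C^\infty(\Omega') \cap L_2(\Omega')$. Since $\bb(\cdot,\pp)$ and $c(\cdot,\pp)$ are bounded on $\bar\Omega$, this immediately yields $\psi_\e * v \in \G^*_\pp(\Omega')$ for each such $\e$, with a smooth representative on which $\cB^*_\pp$ may be evaluated classically.

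For the convergence statement \eqref{conv} I would decompose, by linearity of $\cB^*_\pp$,
\[
\cB^*_\pp(\psi_\e * v - v) = \bigl[\cB^*_\pp(\psi_\e * v) - \psi_\e * (\cB^*_\pp v)\bigr] + \bigl[\psi_\e * (\cB^*_\pp v) - \cB^*_\pp v\bigr].
\]
The second bracket tends to $0$ in $L_2(\Omega')$ by standard mollifier convergence, applied to the function $\cB^*_\pp v \in L_2(\Omega)$. Writing $\cB^*_\pp v = \Div(\bb(\cdot,\pp)v) - c(\cdot,\pp)v$, the first bracket splits as $T_\e v - M_\e v$ with
\[
T_\e v := \Div\bigl(\bb(\cdot,\pp)(\psi_\e * v)\bigr) - \psi_\e * \Div\bigl(\bb(\cdot,\pp)v\bigr), \qquad M_\e v := c(\cdot,\pp)(\psi_\e * v) - \psi_\e * (c(\cdot,\pp)v).
\]

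The multiplicative piece $M_\e v$ vanishes in $L_2(\Omega')$ because both $c(\cdot,\pp)(\psi_\e * v) \to c(\cdot,\pp)v$ and $\psi_\e * (c(\cdot,\pp)v) \to c(\cdot,\pp)v$ in $L_2(\Omega')$, using only boundedness of $c(\cdot,\pp)$. The sole real obstacle is the convergence $T_\e v \to 0$ in $L_2(\Omega')$; this is precisely the content of \cite[Lemma II.1(i)]{DiPerna}, whose hypotheses are met thanks to the Lipschitz regularity of $\bb(\cdot,\pp)$ granted by Assumption \ref{ass:vectorfield} (this is why the Lipschitz assumption is essential at this point). Summing the three contributions gives \eqref{conv}.
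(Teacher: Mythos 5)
Your proof is correct and follows exactly the route the paper intends: the paper offers no argument beyond the citation ``From \cite[Lemma II.1(i)]{DiPerna}, we obtain the following result,'' and your decomposition into the DiPerna--Lions commutator, the harmless zero-order commutator $M_\e v$, and the standard mollifier convergence of $\psi_\e * (\cB^*_\pp v)$ is precisely the reduction that citation presupposes. Nothing to add.
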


As a second step we we use that $\V_\pp\subset \G^*_\pp$, $\pp\in\mD$, and that
we can construct a $\pp$-{\em independent} fundamental system of sections for 
$\prod_{\pp\in\mD}\G^*_\pp$ due to the fact that $H^1(\Omega)$ is dense in $\G^*_\pp$
for each $\pp$, as shown next.

\begin{lemma}
\label{lem:H1}
Assume that $\Omega\subset \R^d$ is a simply connected Lipschitz domain.\\[1.5mm]
(a) The embedding $H^1(\Omega)\subset \G^*_\pp$ is continuous, that is, there exists a $C<\infty$
		such that for all $v\in H^1(\Omega)$ and every $\pp\in\mD$
		\be
		\label{cont}
		\|v\|_{\G^*_\pp}\le C\|v\|_{H^1(\Omega)}. %
		\ee
\smallskip
(b)
   $H^1(\Omega)$ is dense in $\G^*_\pp $ for each $\pp\in\mD$.
\end{lemma}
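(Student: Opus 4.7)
For (a), I would simply apply the product rule: for $v\in H^1(\Omega)$,
\[
\cB^*_\pp v = \bb(\cdot,\pp)\cdot\nabla v + \bigl(\Div\bb(\cdot,\pp)-c(\cdot,\pp)\bigr)v\,.
\]
By Assumption \ref{ass:vectorfield}, $\bb(\cdot,\pp)$ is Lipschitz on $\bar\Omega$ and $c$ is bounded, with bounds uniform in $\pp$, so $\bb(\cdot,\pp)$, $\Div\bb(\cdot,\pp)$, and $c(\cdot,\pp)$ all belong to $L_\infty(\Omega)$ with uniform bounds. H\"older's inequality then yields (\ref{cont}) with a $\pp$-independent constant $C$.

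For (b), the plan is a localize-shift-mollify argument. Fix $\pp\in\mD$ and $v\in\G^*_\pp$. Exploiting the Lipschitz regularity of $\partial\Omega$, I cover $\bar\Omega$ by finitely many open sets $U_0\Subset\Omega, U_1,\ldots,U_N$ in such a way that for each $i\ge 1$ there is a unit vector $e_i$ with $\overline{U_i\cap\Omega}-te_i\Subset\Omega$ for all sufficiently small $t>0$ (the usual inward-cone property). With a Lipschitz partition of unity $\{\chi_i\}$ subordinate to this cover, the identity
\[
\cB^*_\pp(\chi_i v) = \chi_i\,\cB^*_\pp v + \bigl(\bb(\cdot,\pp)\cdot\nabla\chi_i\bigr)v \in L_2(\Omega)
\]
shows that each $\chi_i v \in \G^*_\pp$, so it suffices to approximate each $\chi_i v$ in the graph norm by an element of $H^1(\Omega)$. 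The interior term $\chi_0 v$ has compact support in $\Omega$, so Lemma \ref{lem:MollifieronSubdomain} applied on a slightly larger subdomain immediately gives $\psi_\e\ast(\chi_0 v)\in C_c^\infty(\Omega)\subset H^1(\Omega)$ converging in $\G^*_\pp$ as $\e\to 0$. For each boundary term with $i\ge 1$, I first shift: define $v_i^{(t)}(x):=(\chi_i v)(x-te_i)$, extended by zero outside its translated support; by construction $\operatorname{supp} v_i^{(t)}\Subset\Omega$ for small $t>0$. Lemma \ref{lem:MollifieronSubdomain} then yields $\psi_\e\ast v_i^{(t)}\in C_c^\infty(\Omega)$ approaching $v_i^{(t)}$ in $\G^*_\pp$ as $\e\to 0$. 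A diagonal choice $\e=\e(t)\to 0$ as $t\to 0$, followed by summation over $i$, will deliver the desired $H^1(\Omega)$-approximation of $v$ provided one can show $v_i^{(t)}\to\chi_i v$ in $\G^*_\pp$ as $t\to 0$.

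The main technical obstacle is precisely this last convergence. The $L_2$-convergence $v_i^{(t)}\to\chi_i v$ is standard continuity of translations. For the $\cB^*_\pp$-part, I would use the distributional decomposition
\[
\cB^*_\pp v_i^{(t)} - \bigl[\cB^*_\pp(\chi_i v)\bigr](\cdot-te_i) = \Div\bigl[\bigl(\bb(\cdot,\pp)-\bb(\cdot-te_i,\pp)\bigr)v_i^{(t)}\bigr] - \bigl(c(\cdot,\pp)-c(\cdot-te_i,\pp)\bigr)v_i^{(t)},
\]
together with the $L_2$-convergence $[\cB^*_\pp(\chi_i v)](\cdot-te_i)\to\cB^*_\pp(\chi_i v)$, again by translation continuity. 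Because $v_i^{(t)}\in L_2$ lacks a classical gradient, the divergence in the first summand cannot be estimated term-by-term. Instead, the argument is routed through mollifications, where the Friedrichs commutator $[\bb(\cdot,\pp)\cdot\nabla,\,\psi_\e\ast\,\cdot\,]$ tends to $0$ in $L_2$ as $\e\to 0$ precisely by the Lipschitz regularity of $\bb$ -- this is the content of Lemma \ref{lem:MollifieronSubdomain} and of the DiPerna--Lions commutator estimate. Combined with the $O(t)$-smallness in $L_\infty$ of $\bb(\cdot,\pp)-\bb(\cdot-te_i,\pp)$ (and $L_r$-smallness for finite $r$ of the corresponding differences of $\Div\bb$ and $c$), this forces the whole expression to zero in $L_2$ as $\e,t\to 0$ with $\e$ going to zero sufficiently fast relative to $t$, closing the argument.
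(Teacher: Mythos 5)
Your proof of (a) is the same as the paper's: product rule plus uniform $L_\infty$ bounds on $\bb$, $\Div\bb$ and $c$ supplied by Assumption \ref{ass:vectorfield}. For (b), the paper gives no details at all---it only cites the standard localize--translate--mollify argument of \cite[Thms.~3.15, 3.17]{AdamsFournier}---and your plan is exactly that adaptation, with the correct identification of the one genuinely non-trivial step (the variable coefficients force a Friedrichs/DiPerna--Lions commutator argument, i.e.\ Lemma \ref{lem:MollifieronSubdomain}, where the constant-coefficient Sobolev case needs nothing). One caveat about your closing paragraph: the quantitative route you sketch does not close as stated. The mollified translation term obeys only $\|(\nabla\psi_\e)\ast[(\bb(\cdot,\pp)-\bb(\cdot-te_i,\pp))v_i^{(t)}]\|_{L_2}\lesssim t/\e\,\|v\|_{L_2}$, and since the cone condition forces $\e\lesssim t$ to keep the mollification inside $\Omega$, this is $O(1)$, not $o(1)$; choosing ``$\e$ going to zero fast relative to $t$'' makes it worse, not better. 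The standard repair is to treat translate-then-mollify as convolution with the single shifted kernel $\psi_\e(\cdot-te_i)$ and apply the Friedrichs commutator lemma to that kernel: the resulting commutator operators are uniformly bounded on $L_2$ and vanish in the limit on the dense subset $C^1(\bar\Omega)$, hence converge to zero for every $v$---a qualitative density argument rather than the quantitative bound you propose. With that adjustment your plan is a complete and correct expansion of the argument the paper merely cites.
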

\begin{proof}
When reference to $\Omega$ is clear we sometimes write briefly $\|\cdot\|:= \|\cdot\|_{L_2(\Omega)}$.
Regarding (a),   observe that $\|\mathcal B^*_\pp v\|^2 \leq (\|c\|^2_{\infty}+\|\mathrm {div}(\mathbf b)\|^2_\infty)\|v\|^2+\|\mathbf b\|^2_{\infty}\|\nabla v\|^2$. Choose $C = \max\{1,\|c\|^2_{\infty}+\|\mathrm {div}(\mathbf b)\|^2_\infty,\|\mathbf b\|^2_{\infty}\}$ (note that since $\mathbf{b} \in W^{1}_\infty$, we have ${\rm div}\,\mathbf b(\cdot;\pp)\in
	L_\infty(\Omega)$) and the claim follows.
For (b), we use an immediate adaptation of the standard argument given in \cite[Thm.~3.17]{AdamsFournier}, based on the partition of unity from \cite[Thm.~3.15]{AdamsFournier}.

\end{proof}

We are now prepared to show Lemma \ref{lem:V_pDirectIntegral} by constructing  a fundamental system of sections for $\prod_{\pp\in\mD}\V_\pp$.

\begin{proof}[Proof of Lemma \ref{lem:V_pDirectIntegral}]
The first ingredient is a $\pp$-dependent cutoff function that reduces smooth functions to elements
in $\V_p$. To that end, it is convenient to abbreviate in what follows the directional derivatives
$D_\pp v:= \bb(\cdot;\pp)\cdot \nabla v$.
Consider the boundary value problem
\be
\label{clip1}
-D_\pp \phi(\cdot;\pp)=1,\quad \mbox{in $\Omega$,\quad $\phi|_{\Gamma_{\rm out}(\pp)}=0$}.
\ee
The solution has an explicit representation that we describe next.
To this end, consider the $\pp$-dependent family of characteristics $z(t)=z(t;x_0;\pp), t\in\R$,
defined by the initial value problems
\be
\label{char}
\dot{z}(t)=  - \mathbf b(z(t;x_0;\pp);\pp),\quad z(0,x_0;\pp) = x_0.
\ee
The Lipschitz properties on the convection field in Assumption \ref{ass:vectorfield}  ensure the unique existence of trajectories for all times. 

 As a next step we   infer then
measurability of the solution $\phi(\cdot;\pp)$ to \eref{clip1} from these properties of the characteristics. These will be based on an explicit representation of $\phi(\cdot;\pp)$.

To describe the solution $\phi(\cdot;\pp)$ to \ref{clip1} in terms of the characteristics,  let the time of escape from a point $x\in \Omega$   along a characteristic path
be
\begin{equation}
	\label{eq::ellDefinition}
	t (x;\pp) := \inf\{ s>0 :   z(s;x;\pp) \notin \Omega\}, \quad x_+(x;\pp)= z(t(x;\pp);x;\pp)\in \Gamma_{\rm out}(\pp).
\end{equation}
For a given $\pp\in\mD$, we can express any $x\in \Omega$ in terms of characteristic coordinates as 
\begin{align}
	\label{charcord}
	x &= z(-t (x;\pp);x_+(x,\pp);\pp),
\end{align}
i.e., 
$x_+(x,\pp)\in \Gamma_{\rm out}(\pp)$ is the intersection of the curve $\{z(t;x;\pp): t\ge 0\}$, with $\Gamma_{\rm out}(\pp)$. %
In these terms,  
\begin{align}
	\phi (x,\pp):= t(x;\pp)
\end{align}
is the time spent on a characteristic path from $x$ in $\Omega$ when traveling with
constant speed one, i.e., the length of the corresponding characteristic segment.
Hence,
\[
D_\pp \phi(x;\pp)= \lim_{h\to 0}\frac  1h\Big(\phi(z(h;x;\pp) ;\pp)-\phi(x;\pp)\Big)= \lim_{h\to 0}\frac 1h\Big(t(z(h;x;\pp) ;\pp)-t(x;\pp)\Big)= 1 
\]
which means that $\phi(x;\pp)$ solves \eref{clip1} $\mu$-a.e.\ in $\mD$. Thus measurability of $\mathbf b(\cdot;\pp)$ implies 
measurability of $z(\cdot;\cdot;\pp)$, which in turn implies measurability of
$t(\cdot;\pp)$ and hence that of $\phi(\cdot;\pp)$. Moreover, by the non-degeneracy assumption on the convection field, for any $x_0\in \Gamma_{\rm out}(\pp)$ we have 
\be\label{P1}
\tag{P1}
|\phi(z(-s;x_0;\pp);\pp)|\eqsim s,\quad s\le t_{x_0},
\ee
where $t_{x_0}$ is the maximal travel time when starting at $x_0\in\Gamma_{\rm out}(\pp)$. Hence, we get 
\[ \phi(\cdot,\pp)|_{\Gamma_{\rm out}(\pp)}=0. \]
Moreover, there is a constant $M$ (depending on the convection field and $\Omega$) such that
\be\label{P2}
\tag{P2} \|\phi(\cdot;\pp)\|_{L_\infty(\Omega)}\le M, \quad  \|D_\pp\phi(\cdot;\pp)\|_{L_\infty(\Omega)}\le M.
\ee

As a second step,  assume that $\{\zeta_n\}_{n\in\N}$ is total in $H^1(\Omega)$ and hence in $\G(\cB_\pp^*)$
for every $\pp\in\mD$. 
To proceed let $C^\infty_{0,\pp}(\Omega) = \{v\in C^\infty(\Omega): {\rm supp}\,v\cap \Gamma_{\rm out}(\pp)
=\emptyset\}$. By the same reasoning as in the proof of Lemma \ref{lem:H1} (b), one sees
that
$C^\infty_{0,\pp}(\Omega)$ is dense in $\V_\pp$. Thus,  it suffices 
to show that the system $\phi(\cdot;\pp)\zeta_n$, $n\in \N$, is dense in $C^\infty_{0,\pp}(\Omega)$. For any given $v\in C^\infty_{0,\pp}(\Omega)$ we have
\begin{align*}
	v- \sum_{n\in\N} \alpha_n \phi(\pp)\zeta_n &= \phi(\cdot;\pp)\Big(\phi(\cdot;\pp)^{-1}v - \sum_{n\in\N}\alpha_n\zeta_n\Big).
\end{align*}
Thus, by \eref{P2},
\begin{align*}
	\Big\|D_\pp\Big( v- \sum_{n\in\N} \alpha_n \phi(\cdot;\pp)\zeta_n\Big)\Big\|_{L_2(\Omega)}
	&\le \Big\|(D_\pp\phi(\cdot;\pp))\Big(\phi(\cdot;\pp)^{-1}v - \sum_{n\in\N}\alpha_n\zeta_n\Big)
	\Big\|_{L_2(\Omega)}\\
	&\qquad + \Big\|\phi(\cdot;\pp) D_\pp \Big(\phi(\cdot;\pp)^{-1} v- \sum_{n\in\N} \alpha_n  \zeta_n\Big)\Big\|_{L_2(\Omega)}\\
	&\le %
	M\big(\Big\|\Big(   \phi(\cdot;\pp)^{-1} v- \sum_{n\in\N} \alpha_n  \zeta_n \Big)\Big\|_{L_2(\Omega)}\\
	&\qquad +\Big\|D_\pp\Big(   \phi(\cdot;\pp)^{-1} v- \sum_{n\in\N} \alpha_n  \zeta_n \Big)\Big\|_{L_2(\Omega)}\big).
\end{align*}
Now we will show that $\phi(\cdot,\pp)^{-1}v\in\G^*_\pp$. 
For $v\in C^\infty_{0,\pp}(\Omega)$ there is a $\delta>0$ and neighborhood $\cN_\delta$ of width $\delta $ such that
$\phi(\cdot;\pp)^{-1}v$ remains bounded in $\Omega\setminus \cN_\delta$ because by \eref{P1}
$\|\phi(\cdot;\pp)^{-1}\|_{L_\infty(\Omega\setminus \cN_\delta)}\lesssim \delta^{-1}$. Since $\phi(\cdot;\pp)$ solves \eref{clip1} and $|\phi(\cdot;\pp)|_{\Omega\setminus\cN_\delta}$ remains
lower bounded by a constant times $\delta^{-1}$ we get by product rule that $\|D_\pp \phi(\cdot;\pp)^{-1}\|_{L_\infty(\Omega\setminus \cN_\delta)}<\infty$. 
 Therefore, $\phi^{-1}(\cdot;\pp)v\in \G^*_\pp$ for each $\pp\in\mD$. As $\xi_n$ are chosen to be total in $\G^*_\pp$ for all $\pp\in\G^*_\pp$  we can choose the $\alpha_n$ to make the term $ \Big\| D_\pp\Big(  \phi(\cdot;\pp)^{-1} v- \sum_{n\in\N} \alpha_n  \zeta_n \Big)\Big\|_{L_2(\Omega)}$ as small as we wish and thus $\phi(\cdot,\pp)\zeta_n$ is total in $\V_\pp$.

Since the $\zeta_n$ are independent of $\pp$ and in $H^1(\Omega)$, and $\phi$ is jointly measurable in $x$ and $\pp$, the
functions $\xi_n = \phi\zeta_n$ are measurable on $\Omega\times\mD$.
 Now consider the mapping
	$$
	\pp\mapsto \langle \xi_n,\xi_m\rangle_{\V_\pp}= \langle \cB^*_\pp\xi_n,\cB^*_\pp \xi_m\rangle_{L_2(\Omega)},
	$$
	which is measurable since by product rule the map $$(x,\pp)\mapsto \mathcal B^*_\pp \xi_n(p,x) = \divv (\mathbf b(x;\pp))\phi(x,\pp)\zeta_n(x)+\mathbf b(x,\pp)\cdot(\zeta_n(x)\nabla \phi(x,\pp)+\phi(x,\pp)\nabla \zeta_n(x))$$ is measurable by assumption on $\mathbf b$ and the measurability of $\phi,\zeta_n,D_\pp \phi$ and $ D_\pp \zeta_n$.  
	Hence, we have found a fundamental sequence of $\mu$-measurable sections in 
	$\prod_{\pp\in\mD}\V_\pp$ so that Theorem 2.2 applies and confirms 
	that $\Y=L_2\big(\mD;(\V_\PP)_{\pp\in\mD}\big)$ is a well-defined direct integral
	with norm $\|v\|_\Y^2= \int_\mD \|v\|_{\V_\pp}^2d\mu(\pp)$.
	
Moreover, $\|\cdot\|_{L_2(\Omega)}\le C\|\cdot\|_{\V_\pp}$ (see the comment following
Assumption \ref{ass:vectorfield}) implies that $\Y$ is continuously embedded in 
$L_2(\mD\times\Omega)=L_2(\mD;L_2(\Omega))$, and, as a Hilbert space, is closed.
Here it is understood that
	$\mD\times \Omega$ is viewed as measure space with the product measure whose 
	marginals are $\mu$ and the Lebesgue measure, respectively.
This completes the proof of
	Lemma \ref{lem:V_pDirectIntegral}. \end{proof}

\section{Additional Remarks on Direct Integrals}
 \begin{example}
 \label{ex:B}
	Let $\Gamma_1$, $\Gamma_2$ be two distinct parts of $\Gamma = \partial \Omega$ for some bounded, simply connected domain $\Omega\subset\R^d$. Define the space $$H_{\Gamma_i} = \{v\in H^1(\Omega): v|_{\Gamma_i}=0\}.$$
	Let $\mD\subset\mathbb R^{d_\pp}$ be measurable and consider $\mD^1\subset \mD$ non-measurable such that all measurable subsets have vanishing measure, such as a Vitali set. Note that $\mD^2 = \mD\setminus\mD^1$ is then also non-measurable. Define $$\W_\pp = \begin{cases}
		H_{\Gamma_1} \quad \text{if }\pp\in\mD^1,\\
		H_{\Gamma_2}\quad\text{else.}
	\end{cases}$$
	Let $(\xi_n(\pp))_{n\in\N}$ be an orthonormal basis in $\W_\pp$. Observe that we can choose $ \xi_n(\pp) =\xi_n^{1,2}$ to be constant when restricted to $\mD^1$ and $\mD^2$, respectively.
	
	 Since by construction
	 $$
	 \pp\mapsto \langle\xi_n(\pp),\xi_m(\pp)\rangle_{H^1(\Omega)}=\delta_{m,n},
	 $$ 
	  $(\xi_n)_{n\in\N}$ is a $\mu$-measurable sequence so that the direct integral is well-defined. Observe, $v\in\prod_{\pp \in \mD}\W_\pp$, with  $v(\pp)=\sum_{n\in\N}v_n(\pp)\xi_n(\pp)$ is $\mu$-measurable if and only if  for all $n\in\N$ the coefficient map 
	  $$
	  \pp\mapsto v_n(\pp)
	  $$ 
is measurable.
	
	Choose $v\in L_2(\mD,(\W_\pp)_{\pp\in\mD})$  as $v(\pp)=\xi_1(\pp)$, i.e. $v_1(\pp)=1$ and $v_n(\pp)=0$ for all $n\geq2$ and $\pp\in \mD$. Then for all $n\in\N$ the coefficient map is constant and hence measurable and square integrable. Since $\bigcap_{\pp\in\mD}\W_\pp$ is not dense in any $\W_\pp$, we can assume (after reordering of the basis $\xi_n^{1,2}$) that $\xi_1^{1,2}\notin  \bigcap_{\pp\in\mD}\W_\pp$ and thus for all $\pp\in \mD$  we have $v(\pp)\notin \bigcap_{\pp\in\mD}\W_\pp$.  Furthermore, let $\tilde \xi_n$ be an orthonormal  basis of $H^1(\Omega)$  with $\tilde\xi_1 = \xi_1(\pp)$ for some $\pp\in\mD^1$, and thus a fundamental system of $\prod_{\pp\in\mD}H^1(\Omega)$.

	Then we have
	for all $\pp\in\mD^1$ that $$\langle v(\pp),\tilde\xi_1\rangle_{H^1(\Omega)} = \langle \xi_1(\pp),\tilde\xi_1\rangle_{H^1(\Omega)}=\langle \tilde\xi_1,\tilde\xi_1\rangle_{H^1(\Omega)}=1$$ as well as 
	for all $\tilde\pp\in\mD^2$ we get
	$$ \langle v(\tilde\pp),\tilde \xi_1\rangle  = \langle \xi_1(\tilde\pp),\tilde\xi_1\rangle_{H^1(\Omega)}<1 $$
	as $\xi_1(\pp)\notin H_{\Gamma_2} $ for $\pp\in \mD^1$ and thus $\langle \xi_1(\pp_1),\xi_1(\pp_2)\rangle<1$ for $\pp_1\in\mD^1$ and $\pp_2\in\mD^2$ (recall that $\xi(\pp)$ is an orthonormal basis for all $\pp\in\mD$).
	Then $$\Psi:\pp\mapsto \langle v(\pp),\tilde\xi_1\rangle$$
	is not measurable since $\Psi^{-1}(\{1\})=\mD^1$ and thus $v\notin L_2(\mD,(H^1(\Omega))_{\pp\in\mD})$.
	
\end{example}
However, if there is a fundamental sequence in $\prod_{\pp \in \mD}\V_\pp$ that is measurable in $\prod_{\pp\in\mD}\W_\pp$, we have the subspace relations we expect.
\begin{prop}
\label{prop:subspace}
	Let $\W_\pp$ be a separable Hilbert space such that $\bigcap_{\pp\in\mD}\W_\pp$ is dense in each $\W_\pp$ and equip it for each $p$ with the same fundamental sequence $(\tilde \xi_n)_{n\in\N}$ with $\tilde\xi_n\in\bigcap_{\pp\in\mD}\W_\pp$ for all $n\in\N$. Consider closed subspaces $\V_\pp$ endowed with the scalar product of $\W_\pp$. Assume that there is a fundamental sequence $(\xi_n)_{n\in\N}$ of $\prod_{\pp\in\mD}\V_\pp$ that is $\mu$-measurable with respect to $(\tilde \xi_n)_{n\in\N}$. Then we have
	\begin{equation}\label{eq:dssubspace}
	 L_2(\mD,(\V_\pp)_{\pp\in\mD})\subset L_2(\mD,(\W_\pp)_{\pp\in\mD})\quad \text{as a closed subspace.}
	 \end{equation}
	Conversely, if \eqref{eq:dssubspace} holds, then the fundamental sequence of $\prod_{\pp\in\mD}\V_\pp$ is $\mu$-measurable in $\prod_{\pp\in\mD}\W_\pp$.
\end{prop}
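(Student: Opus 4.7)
The plan is to prove both inclusions by reducing to density and completeness arguments in the direct integral, exploiting the compatibility hypothesis that the fundamental sequence $(\xi_n)_{n\in\N}$ of $\prod_{\pp\in\mD}\V_\pp$ is simultaneously $\mu$-measurable with respect to the ambient fundamental sequence $(\tilde\xi_m)_{m\in\N}$ of $\prod_{\pp\in\mD}\W_\pp$, i.e.\ each map $\pp\mapsto \langle \xi_n(\pp),\tilde\xi_m\rangle_{\W_\pp}$ is $\mu$-measurable.

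For the forward inclusion, let $v\in L_2(\mD,(\V_\pp)_{\pp\in\mD})$ and, for each $N$, let $\Pi_N(\pp)$ denote the orthogonal projection in $\V_\pp$ onto $\mathrm{span}\{\xi_1(\pp),\dots,\xi_N(\pp)\}$. The coefficient vector in $\Pi_N(\pp)v(\pp)=\sum_{n=1}^N c_n^N(\pp)\xi_n(\pp)$ is obtained from the normal equations $G_N(\pp)c^N(\pp)=b_N(\pp)$ with $G_N(\pp)=[\langle \xi_i(\pp),\xi_j(\pp)\rangle_{\V_\pp}]$ and $b_N(\pp)=[\langle v(\pp),\xi_i(\pp)\rangle_{\V_\pp}]$, both $\mu$-measurable in $\pp$. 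Totality of $(\xi_n(\pp))$ in $\V_\pp$ gives $\|\Pi_N v-v\|_{\V_\pp}\to 0$ pointwise, and the estimate $\|\Pi_N v-v\|_{\V_\pp}\le \|v\|_{\V_\pp}$ with dominated convergence yields $\Pi_N v\to v$ in $L_2(\mD,(\V_\pp)_{\pp\in\mD})$. By the compatibility hypothesis,
\[
\langle \Pi_N(\pp)v(\pp),\tilde\xi_m\rangle_{\W_\pp}=\sum_{n=1}^N c_n^N(\pp)\,\langle \xi_n(\pp),\tilde\xi_m\rangle_{\W_\pp}
\]
is $\mu$-measurable, so $\Pi_N v\in L_2(\mD,(\W_\pp)_{\pp\in\mD})$. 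Since $\V_\pp$ carries the scalar product of $\W_\pp$, the $L_2$-norms in the two direct integrals coincide, and completeness of $L_2(\mD,(\W_\pp)_{\pp\in\mD})$ places the $L_2$-limit $v$ into $L_2(\mD,(\W_\pp)_{\pp\in\mD})$. Closedness follows at once: $L_2(\mD,(\V_\pp)_{\pp\in\mD})$ is itself a Hilbert space by Theorem \ref{thm:mes}, its norm is the restriction of the ambient one, hence it is complete and therefore closed inside $L_2(\mD,(\W_\pp)_{\pp\in\mD})$.

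For the converse inclusion, I would exploit that $\mu$ is a finite (probability) measure. For each $n$ the map $\pp\mapsto \|\xi_n(\pp)\|_{\V_\pp}^2=\langle \xi_n(\pp),\xi_n(\pp)\rangle_{\V_\pp}$ is $\mu$-measurable, so the sets $A_{n,k}:=\{\pp\in\mD:\|\xi_n(\pp)\|_{\V_\pp}\le k\}$ are measurable with $\mu(A_{n,k})\uparrow \mu(\mD)$ as $k\to\infty$. The truncations $v_{n,k}:=\mathbf{1}_{A_{n,k}}\xi_n$ are $\mu$-measurable in $\prod_{\pp\in\mD}\V_\pp$ with $\|v_{n,k}\|_{L_2}^2\le k^2\mu(\mD)<\infty$, hence belong to $L_2(\mD,(\V_\pp)_{\pp\in\mD})$. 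The hypothesized subspace inclusion places each $v_{n,k}$ in $L_2(\mD,(\W_\pp)_{\pp\in\mD})$, so every $\pp\mapsto \langle v_{n,k}(\pp),\tilde\xi_m\rangle_{\W_\pp}$ is measurable. Letting $k\to\infty$ yields pointwise $v_{n,k}(\pp)\to \xi_n(\pp)$, and the pointwise limit $\langle \xi_n(\pp),\tilde\xi_m\rangle_{\W_\pp}$ of measurable functions is measurable, which is precisely the required $\mu$-measurability of $(\xi_n)$ in $\prod_{\pp\in\mD}\W_\pp$.

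The main obstacle I anticipate is the density step of the forward direction: the Gram matrix $G_N(\pp)$ may be rank-deficient on sets of positive measure, so one must choose $c^N(\pp)$ in a $\mu$-measurable way despite non-uniqueness. I would resolve this via the Moore--Penrose pseudoinverse, setting $c^N(\pp):=G_N(\pp)^+ b_N(\pp)$, whose measurability is standard (e.g.\ via the a.e.\ limit $(G_N+\varepsilon I)^{-1}G_N(G_N+\varepsilon I)^{-1}$ as $\varepsilon\downarrow 0$, or by a measurable selection argument based on the $\mu$-measurability of $\pp\mapsto \mathrm{rank}\,G_N(\pp)$). Once these measurable projection coefficients are in hand, the remainder of the argument reduces to standard dominated convergence and completeness, and the truncation step in the converse is straightforward.
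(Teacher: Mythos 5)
Your proof is correct, and its logical skeleton matches the paper's: both arguments reduce everything to transferring $\mu$-measurability between the two fundamental sequences via the cross inner products $\pp\mapsto\langle\xi_n(\pp),\tilde\xi_m\rangle_{\W_\pp}$, observe that the fiber norms (hence the $L_2$ integrability conditions) coincide, and obtain closedness from completeness of the direct integral. The technical execution differs in both directions, though. For the forward inclusion the paper invokes \cite[Lemma~8.12]{Takesaki} to replace $(\xi_n(\pp))_{n\in\N}$ by a fiberwise \emph{orthonormal basis}, after which $v(\pp)=\sum_n\langle v(\pp),\xi_n(\pp)\rangle_{\V_\pp}\xi_n(\pp)$ and the measurability of $\langle v(\pp),\tilde\xi_m\rangle_{\W_\pp}$ is an immediate pointwise limit of measurable partial sums; your Gram-matrix/Moore--Penrose construction of measurable projection coefficients is a self-contained substitute that avoids citing Takesaki but pays for it with the rank-deficiency bookkeeping you correctly flag (which is exactly what the orthonormalization lemma is designed to eliminate). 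For the converse, the paper simply asserts the measurability ``by definition of the Hilbert integral,'' which tacitly skips the point that the sections $\xi_n$ need not be square-integrable over $\mD$ and hence need not themselves lie in $L_2(\mD,(\V_\pp)_{\pp\in\mD})$; your truncation to the sets $A_{n,k}=\{\pp:\|\xi_n(\pp)\|_{\V_\pp}\le k\}$, using finiteness of $\mu$, closes precisely this gap and is a genuine improvement in rigor over the published argument. In short: your route is more elementary and more detailed, the paper's is shorter by outsourcing the orthonormalization to a standard lemma; both are valid.
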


\begin{proof}
	By definition we have $\prod_{\pp \in \mD}\V_\pp\subset \prod_{\pp \in \mD}\W_\pp$ as sets. By \cite[Lemma~8.12]{Takesaki} we can assume that $(\xi_n(\pp))_{n\in\N}$ forms an orthonormal basis in $\V_\pp$ for all $\pp\in\mD$ and thus $\mu$-measurability in $\prod_{\pp \in \mD}\V_\pp$ reduces to measurability of the coefficient map.  As the map $$\pp\mapsto \langle \xi_n(\pp),\tilde\xi_m\rangle_{\W_\pp}$$ is measurable for all $n,m\in\N$ we have that any $v\in\prod_{\pp\in\mD}$ that is $\mu$-measurable with respect to $\xi_n$ is also $\mu$-measurable with respect to $\tilde\xi_n$ as the coefficient maps are measurable. As the fiber spaces share the same bilinear form and as both Hilbert integrals are complete, we get by definition $$L_2(\mD,(\V_\pp)_{\pp\in\mD})\subset L_2(\mD,(\W_\pp)_{\pp\in\mD})$$ as a closed subspace.
	
	On the other hand, if $L_2(\mD,(\V_\pp)_{\pp\in\mD})\subset L_2(\mD,(\W_\pp)_{\pp\in\mD})$ we get by definition of the Hilbert integral that $$\pp\mapsto \langle \xi_n(\pp),\tilde\xi_m\rangle_{\W_\pp}$$ has to be measurable.
\end{proof}

\end{document}